\DeclareMathOperator{\supp}{supp}
\DeclareMathOperator{\diag}{diag}
\newcommand{\proper}{\mathsf}
\newcommand{\pN}{\proper{N}}
\newcommand{\mv}[1]{{\boldsymbol{\mathrm{#1}}}}
\newcommand{\md}{\ensuremath{\,\mathrm{d}}}
\DeclarePairedDelimiter\ceil{\lceil}{\rceil}
\renewcommand{\H}{a}
\newtheorem{Theorem}{Theorem}[section]
\newtheorem{Lemma}[Theorem]{Lemma}
\newtheorem{Proposition}[Theorem]{Proposition}
\newtheorem{Corollary}[Theorem]{Corollary}
\theoremstyle{definition}
\newtheorem{example}[Theorem]{Example}
\newtheorem{Assumption}[Theorem]{Assumption}
\theoremstyle{remark}
\newtheorem{Remark}[Theorem]{Remark}
\begin{document}

% \title[short text for running head]{full title}
\title[Numerics for fractional differential equations on metric graphs]{Regularity and numerical approximation of fractional elliptic differential equations on compact metric graphs}

%    Only \author and \address are required; other information is
%    optional.  Remove any unused author tags.

%    author one information
% \author[short version for running head]{name for top of paper}
\author{David Bolin}
\address[David Bolin]{Statistics Program, Computer, Electrical and Mathematical Sciences and Engineering Division,
	King Abdullah University of Science and Technology,
	Thuwal 23955-6900,
	Saudi Arabia}	
\email{david.bolin@kaust.edu.sa}

%    author two information
\author{Mih\'aly Kov\'acs}
\address[Mih\'aly Kov\'acs]{Department of Mathematical Sciences,Chalmers University of Technology and University of Gothenburg, SE-41296 Gothenburg, Sweden\newline
	Department of Differential Equations, Budapest University of Technology and Economics, M\H{u}egyetem rkp. 3., H-1111 Budapest, Hungary\newline
	Faculty of Information Technology and Bionics, P\'azm\'any P\'eter Catholic University, Pr\'ater utca 50/a., H-1083 Budapest, Hungary}
\email{mihaly@chalmers.se}
\thanks{}

\author{Vivek Kumar}
\address[Vivek Kumar]{Theoretical Statistics and Mathematics Unit, Indian Statistical Institute, Bangalore Centre, 8th Mile Mysore Road, Bangalore, 560059, Karnataka, India}
\email{vivekkumar\_ra@isibang.ac.in}
\thanks{}

\author{Alexandre B.\ Simas}
\address[Alexandre B.\ Simas]{Statistics Program, Computer, Electrical and Mathematical Sciences and Engineering Division \\
	King Abdullah University of Science and Technology\\
	Thuwal 23955-6900\\
	Saudi Arabia}	
\email{alexandre.simas@kaust.edu.sa}
\thanks{}

%    \subjclass is required.
\subjclass[2020]{Primary 35R02, 35A01, 35A02,60H15, 60H40}

\date{}

\dedicatory{}

%    Abstract is required.
\begin{abstract}
	The fractional differential equation $L^\beta u = f$ posed on a compact metric graph is considered, where $\beta>0$ and $L = \kappa^2 - \nabla(\H\nabla)$ is a second-order elliptic operator equipped with certain vertex conditions and sufficiently smooth and positive coefficients $\kappa,\H$. We demonstrate the existence of a unique solution for a general class of vertex conditions and derive the regularity of the solution in the specific case of Kirchhoff vertex conditions. These results are extended to the stochastic setting when $f$ is replaced by Gaussian white noise. For the deterministic and stochastic settings under generalized Kirchhoff vertex conditions, we propose a numerical solution based on a finite element approximation combined with a rational approximation of the fractional power $L^{-\beta}$. For the resulting approximation, the strong error is analyzed in the deterministic case, and the strong mean squared error as well as the $L_2(\Gamma\times \Gamma)$-error of the covariance function of the solution are analyzed in the stochastic setting. Explicit rates of convergences are derived for all cases. Numerical experiments for ${L = \kappa^2 - \Delta, \kappa>0}$ are performed to illustrate the  results.
\end{abstract}

\maketitle

\section{Introduction and preliminaries}

\subsection{Introduction}
Recently, the study of differential operators on metric graphs has gained considerable attention 
\cite{Berkolaiko2013}. 
Metric graphs equipped with differential operators, also known as quantum graphs, are applied in a broad range of applications, such as the formulation of free-electron models for organic molecules \cite{pauling1936diamagnetic}, superconductivity in granular and artificial materials \cite{alexander1983superconductivity}, acoustic and electromagnetic wave guide networks  \cite{flesia1987strong}, Anderson transition in a disordered wire \cite{anderes2020isotropic,shapiro1982renormalization}, description of quantum chaos \cite{Berkolaiko2013} and statistics \cite{BSW2022}. 

A compact metric graph $\Gamma$ consists of a finite set of vertices $\mathcal{V}=\{v_i\}$ and a finite set of edges 
$\mathcal{E}=\{e_j\}$ connecting the vertices. Each edge can be considered as a parametric curve with a given length 
$l_e \in (0,\infty)$, which connects a pair of vertices. 
A location $s =(e,t) \in \Gamma$ is a position on an edge, $e$, where $t\in[0,l_e]$. As a metric on $\Gamma$, 
we use the shortest path distance, which for any two points in $\Gamma$ is defined as the length of the shortest path in $\Gamma$ connecting the two.
The degree of each vertex (i.e., the number of edges connected to it) is finite, and we assume that the graph is connected, so that there is a path between any two locations in the graph. 
On such a graph, we for $\beta>0$ consider the following fractional elliptic differential equation:
\begin{align}\label{Amaineqn_deterministic}
	L^\beta u=f \quad \mbox{on $\Gamma$,}
\end{align}
where $f\in L_{2}(\Gamma)$. Here, $L_{2}(\Gamma)$ is the space of real-valued measurable functions, which are square-integrable on each $e\in \mathcal{E}$. That is, the restriction of $f$ to $e$, denoted by $f_e$, belongs to $L_2(e)$, the space of square-integrable functions on $e$. The space $L_2(\Gamma)$ is equipped with the norm $\|u\|_{L_{2}(\Gamma)}^{2}:=\sum_{e\in \mathcal{E}}\|u_e\|_{L_2(e)}^{2}$ and inner product $(u,v)=\sum_{e\in\mathcal{E}}\int_e u_e(x)v_e(x)dx$. Further, $L$ is a second-order elliptic operator that acts on a sufficiently differentiable function $u$ at a location $x$ on $e\in\mathcal{E}$, as follows:
\begin{equation}\label{eq:operatorL}
	(Lu)(x) = -\frac{\md}{\md x_e}\left(\H_e(x) \frac{\md}{\md x_e} u_e(x)\right) + \kappa_e^2(x)u_e(x),
\end{equation}
where $\md /\md x_e$ is the standard derivative operator on $e$, when we identify $e$ with the interval $[0,l_e]$ and $\kappa$ is a bounded function, $\kappa\in L_\infty(\Gamma)$, satisfying
\begin{equation}\label{eq:condcoeff2}
	\textrm{ess inf}_{x\in\Gamma} \kappa(x) \geq \kappa_0 > 0,
\end{equation}
for some  $\kappa_0>0$, and $\H:\Gamma\to\mathbb{R}$ is a positive Lipschitz function. As $\Gamma$ is compact, the minimum of $\H$ is achieved for some $x_0\in\Gamma$. We let $\H_0 = \H(x_0)$; thus,
\begin{equation}\label{eq:condcoeff1}
	\forall x\in\Gamma, \quad \H(x) \geq \H_0 > 0.
\end{equation}
The operator is also assumed to satisfy certain vertex conditions, at the vertices of $\Gamma$. Finally, the fractional power $L^\beta$ is understood in the spectral sense, as discussed in detail later together with the exact domain of the operator. One example of vertex conditions that we consider is the generalized Kirchhoff conditions:
\begin{equation}\label{eq:kirchhoff}
	\big\{\mbox{$u$ is continuous on $\Gamma$ and $\forall v\in\mathcal{V}: \H(v)\sum_{e\in\mathcal{E}_v} \partial_e u(v) = \alpha u(v)$} \big\},
\end{equation}
where $\partial_eu(v)$ is the directional derivative of $u$ on $e$ in the directional away from $v$, that is, if $e=[0,l_e]$, then $\partial_e u(0) = \nicefrac{\md u}{\md x_e}(0)$, and $\partial_eu(l_e) = -\nicefrac{\md u}{\md x_e}(l_e)$, $\alpha\in\mathbb{R}$ and $\mathcal{E}_v$ denotes the set of edges connected to the vertex $v$. For $\alpha=0$, the commonly used standard Kirchhoff conditions are obtained. 

We also consider the fractional stochastic differential equation:
\begin{align}\label{Amaineqn}
	L^\beta u=\mathcal{W}\quad\text{on } \Gamma,
\end{align}
where $\beta>\nicefrac14$ and $\mathcal{W}$ represents Gaussian white noise defined on a given probability space $(\Omega, \mathcal{F},\mathbb{P})$.
The importance of Eq.~\eqref{Amaineqn} is that, when considered on $\mathbb{R}^d$ instead of $\Gamma$ with $\kappa$ is constant and $\H=I$, it defines the popular Gaussian Mat\'{e}rn fields, which are often used for modeling in spatial statistics \cite{lindgren2022spde}. Gaussian random fields, including Mat\'{e}rn fields, have been extensively studied on Euclidean spaces, but the corresponding results metric graphs are much more limited \cite{anderes2020isotropic,borovitskiy2021matern}. One reason for this is that it is challenging to define valid isotropic covariance functions \cite{anderes2020isotropic} in the metric of the graph.
However, \cite{BSW2022} used the SPDE connection for Gaussian Mat\'ern fields to define Gaussian Whittle--Mat\'ern fields on compact metric graphs as solutions to \eqref{Amaineqn} when $\kappa$ is constant and $\H=I$. These models have recently been shown to be valuable for real-world applications in \cite{BSW2023_AOS}.
When Eq.~\eqref{Amaineqn} is considered on $\mathbb{R}^d$ with non-constant parameters $\kappa^2$ and $\H$, the resulting Gaussian fields are often referred to as generalized Whittle--Mat\'ern fields  \cite{lindgren2022spde, coxkirchner}. Such fields are important in statistics since they facilitate modeling data with non-stationary features such as a spatially varying marginal variance (see, e.g., \cite[see, e.g.]{BK2020rational}). We refer to the solution to \eqref{Amaineqn} as a generalized Whittle--Mat\'ern field on the metric graph, and we believe that these fields will be important for statistical applications requiring non-stationary Gaussian fields on metric graphs. In fact, as far as we know, \eqref{Amaineqn} is the first construction of Gaussian fields on general metric graphs that can model non-stationary features. 

\subsection{Summary of contributions and outline}
The paper is divided into three parts. In the first part (Sections~\ref{sec:operator} and \ref{sec:solutions}), we derive conditions for the existence and uniqueness of the solution to both \eqref{Amaineqn_deterministic} and \eqref{Amaineqn} given a general class of vertex conditions that includes the generalized Kirchhoff conditions as a particular case. This derivation extends the work of \cite{BSW2022} in which it was proved that \eqref{Amaineqn} has a unique solution for Kirchhoff vertex conditions, a constant $\kappa>0$, and $\H = I$. In this section, we also derive a new Weyl's law and a trace theorem for metric graphs.

In the second part (Section~\ref{sec:regularity} and \ref{sec:proof}), we focus on the case of standard Kirchhoff vertex conditions ($\alpha=0$) and characterize the regularity of the solutions in deterministic and stochastic cases. To do so, we first derive a new characterization of fractional Sobolev spaces on compact metric graphs in Theorem~\ref{thm:characterization}, which is one of our main results, as well as a new integration by parts formula for metric graphs. 
We note that these regularity results would be essential in the derivation of error estimates in Sobolev norms, similar to those in \cite{coxkirchner} for Gaussian random fields on Euclidean domains. However, we do not pursue this direction in the present paper.

In the final part, we propose a numerical method for approximating the solutions of \eqref{Amaineqn_deterministic} and \eqref{Amaineqn} for generalized Kirchhoff vertex conditions.
The numerical approximations, presented in Section~\ref{sec:numerical}, use a finite element method (FEM) combined with a quadrature approximation of the fractional power from \cite{bonito2015numerical}. The FEM discretization is inspired by \cite{Arioli2017FEM}, who considered the nonfractional deterministic elliptic equation \eqref{Amaineqn_deterministic} with $\H = I$ and $\beta=1$ and standard Kirchhoff vertex conditions, and we thus extend that work to more general operators, more general vertex conditions, fractional powers, and stochastic equations. 
For the proposed numerical method, we derive explicit rates of convergence of the strong mean squared error for the approximations of \eqref{Amaineqn_deterministic} and \eqref{Amaineqn}, and of the $L_2(\Gamma\times\Gamma)$-error of the approximation of the covariance function for the Gaussian random field defined by \eqref{Amaineqn}. 
Finally, the derived convergence rates are verified numerically in Section~\ref{sec:experiments}.

\subsection{Preliminaries and notation}\label{sec:preliminaries}
Throughout the paper, we let $\Gamma$ denote a general metric graph, which might have loops and multiple edges connecting the same two vertices. We further do not restrict the graph to have Euclidean edges as in \cite{anderes2020isotropic}.
We let $C(\Gamma) = \{f\in L_2(\Gamma): f$ is continuous$\}$ denote the space of continuous functions on $\Gamma$ and let 
$\|\phi\|_{C(\Gamma)} = \sup\{ \lvert\phi(x)\rvert : x\in\Gamma\}, \phi\in C(\Gamma)$, 
denote the supremum norm. For $0<\gamma<1$, we introduce the $\gamma$-H\"older semi-norm
$$
[\phi]_{C^{0,\gamma}(\Gamma)} = \sup\left\{\frac{\lvert\phi(x)-\phi(y)\rvert}{d(x,y)^{\gamma}}: x,y\in \Gamma, x\neq y \right\},
$$
and the $\gamma$-H\"older norm
$\|\phi\|_{C^{0,\gamma}(\Gamma)} = \|\phi\|_{C(\Gamma)} + [\phi]_{C^{0,\gamma}(\Gamma)}$. Then, $C^{0,\gamma}(\Gamma)$ denotes the space of $\gamma$-H\"older continuous functions (i.e., the set of functions $\phi\in C(\Gamma)$ with $\|\phi\|_{C^{0,\gamma}(\Gamma)}<\infty$). 
We let $H^{1}(\Gamma)$ denote the Sobolev space consisting of all continuous functions on $\Gamma$ that belong to $H^{1}(e)$ for each edge $e\in \mathcal{E}$, with norm
\begin{align*}
	\|u\|_{H^{1}(\Gamma)}^{2} :=& \sum_{e\in \mathcal{E}}	\|u\|_{H^{1}(e)}^{2} = \sum_{e\in \mathcal{E}} \int_{e}\left(u_e'(x)\right)^2dx+\int_{e}\lvert u\rvert^2dx.
\end{align*}  
Let $\nabla:H^1(\Gamma)\to L^2(\Gamma)$
	be the global derivative operator that acts locally on each edge $e\in\mathcal{E}$ as
	$\nabla u(x) = \md u(x)/\md x_e = u_e'(x) = Du_e(x)$, for $u\in H^1(\Gamma)$ and $x\in e$. 

For $r\in\mathbb{N}$, $H^r(e)$ is the standard higher-order Sobolev space on an interval. Further, for $r\in\mathbb{N}$, we introduce the decoupled Sobolev spaces ${\widetilde{H}}^{r}(\Gamma):=\bigoplus_{e\in \mathcal{E}}H^r(e)$, consisting of functions $u$ with norm
	$\|u\|_{{\widetilde{H}}^{r}(\Gamma)}^{2}:=\sum_{e\in \mathcal{E}}	\|u\|_{{H}^{r}(e)}^{2}<\infty$.
Thus, we enforce the smoothness conditions on the edges only and have no global continuity requirement.
Finally, let  ${\widetilde{H}^r_C(\Gamma) = \widetilde{H}^r(\Gamma)\cap C(\Gamma)}$, where $r\in\mathbb{N}$. We observe that $H^1(\Gamma) = \widetilde{H}^1_C(\Gamma)$ and, for $r\geq 1$, we have $\widetilde{H}^r_C(\Gamma)\subset H^1(\Gamma)$. 

Let us now introduce some general required notation. 
Let $(E,\|\cdot\|_E)$ and $(F,\|\cdot\|_F)$ be two separable Hilbert spaces with norms $\|\cdot\|_E$ and $\|\cdot\|_F$, respectively. If $E\subset F$ and there exists a constant $C>0$ such that for every $f\in E$, ${\|f\|_F\leq C\|f\|_E}$, we write $(E, \|\cdot\|_E) \hookrightarrow (F,\|\cdot\|_F)$, and say that $E$ is continuously embedded in $F$. If  ${(E, \|\cdot\|_E) \hookrightarrow (F,\|\cdot\|_F) \hookrightarrow (E,\|\cdot\|_E)}$, we write ${(E,\|\cdot\|_E) \cong (F,\|\cdot\|_F)}$. Further, $\mathcal{L}(E, F)$ is the Banach
space of bounded linear operators from $E$ to $F$, endowed with the operator norm  ${\|\cdot\|_{\mathcal{L}(E,F)} = \sup_{\|u\|_{E}=1} \|\cdot u\|_F}$. Similarly, $\mathcal{L}_2(E, F)$ is the Hilbert space of Hilbert--Schmidt operators, endowed with the Hilbert--Schmidt norm ${\|\cdot \|_{\mathcal{L}_2(E, F)}^{2}:= \sum_{i\in \mathbb{N}}\|\cdot e_i\|^{2}_{F}}$, where $\{e_i\}_{i\in \mathbb{N}}$ is a complete orthonormal set in $(E, \|\cdot\|_{E})$. To simplify the notation, $\mathcal{L}(E, E)$ is denoted by $\mathcal{L}(E)$ and $\mathcal{L}_2(E, E)$ is denoted by $\mathcal{L}_2(E)$.
Finally, for a parameter set $P$ and two mappings $A,B : P \rightarrow \mathbb{R}$, we write $A(p) \lesssim B(p)$ if there exists a constant $C>0$ such that $A(p) \leq B(p)$ for all $p\in P$. To indicate that $C$ depends on some parameter $q$ of the mappings, we write $A(p) \lesssim_q B(p)$. We define $C^{\infty}_{c}(e)$ as the set of infinitely differentiable functions with compact support on the interior of $e\in \mathcal{E}$. Finally, given a Hilbert space $H$ and its dual $H^*$,  $\langle\cdot,\cdot\rangle_{H^*\times H}$ is the duality pairing between $H^*$ and $H$. 

\section{The differential operator}\label{sec:operator}
This section introduces the differential operator $L$ for a general class of vertex conditions. We derive some basic properties of the operator that are used in the next section to prove the existence and uniqueness of the solutions to \eqref{Amaineqn_deterministic} and \eqref{Amaineqn}.

\subsection{Main properties}
Observe that, for any $u\in \bigoplus_{e\in\mathcal{E}} C^2(e)$, 
the operator $L$ acts on $u$ in a direction-free manner: if we replace the derivative $\frac{\md}{\md x_e}$ with $\frac{d}{d\overline{x}_{e}}$, where we consider the derivative in the reverse direction in the latter, then
$$-\frac{\md}{\md x_e}\left(\H(x) \frac{\md}{\md x_e} u(x)\right) + \kappa^2(x)u(x) = -\frac{d}{d\overline{x}_{e}}\left(\H(x) \frac{d}{d\overline{x}_{e}} u(x)\right) + \kappa^2(x)u(x).$$

  To define the vertex conditions of $L$, we introduce the degree $d_v$ of any vertex $v$ as $d_v=\lvert\mathcal{E}_v\rvert$, where $\mathcal{E}_v$ denotes the set of edges incident to vertex $v$. We let $u_i(v)$ denote the value at vertex $v$ that function $u$ attains along the edge $e_i$ incident to $v$, for $i=1,2,\ldots,d_v,$, that is, $u_i(v)$ is the restriction of $u(\cdot)$ to the edge $e_i$ evaluated at $v$. A commonly considered class of vertex 
conditions can be formulated as follows:
\begin{align}\label{BCs}
	A_vF(v)+\H(v)B_vF^{'}(v)=0~~~\mbox{for all}~ v \in \mathcal{V},
\end{align}
where $A_v, B_v$ are $d_v\times d_v$ matrices such that the matrix $(A_v,B_v)$, which consists of the columns of $A_v$ and $B_v$, has maximal rank $d_v$ and
$A_vB_{v}^{*}$ is self-adjoint. Further, $F(v) = (u_1(v),\ldots, u_{d_v}(v))^\top$ and $F^{'}(v):=(\partial_e u_{1}(v),\ldots, \partial_e u_{d_v}(v))^\top$.
The condition \eqref{BCs}, where $A_v=\mathbb{I}_{d_v}$ and $B_v=0$, provides Dirichlet boundary conditions, and $B_v=\mathbb{I}_{d_v}$ and $A_v=0$ results in Neumann boundary conditions. Moreover, if
\begin{equation}\label{eq:generalized_kirchoff}
	A_v = \begin{bmatrix}
		1 & -1     &        &  \\
		& \ddots & \ddots & \\
		&        & 1      & -1\\
		-\alpha  &       &  & 
	\end{bmatrix},
	\quad
	B_v=\begin{bmatrix}
		& & & & \\
		& & & & \\
		& & & & \\
		& & & & \\
		1 &1  &1  &\cdots &1
	\end{bmatrix},
\end{equation} 
we obtain the generalized Kirchhoff conditions \eqref{eq:kirchhoff}.

Let $P_v$ and $P_{1v}$ be orthogonal projections in $\mathbb{R}^{d_v}$ onto the 
kernels ${K_v=ker(B_v)}$ and $K_{1v}=ker(B^{*}_{v})$, respectively. 
We further let $Q_v = I-P_v$ and ${Q_{1v} = I - P_{1v}}$ be complementary orthogonal projectors on the 
ranges $R_v=Range(B^{*}_{v})$ and ${R_{1v}=Range(B_v)}$, respectively. Then, the map 
$Q_{1v}B_vQ_v:R_v\rightarrow R_{1v}$ is invertible (see \cite[Lemma~4]{kuchment2004quantum}), and we let $B_v^{(-1)}$ denote its inverse. Then, by
{\cite[Corollary 5]{kuchment2004quantum}}, the vertex conditions in \eqref{BCs} with the assumption that $(A_v, B_v )$ has a maximal rank $d_v$ and $A_vB^{*}_{v}$ is self-adjoint, 
is equivalent to the following pair of conditions:
\begin{equation}\label{eq:boundaryconditionsselfadjoint}
	P_vF(v)=0,\quad\hbox{and}\quad \Lambda_v Q_vF(v)+\H(v)Q_vF'(v)=0.
\end{equation}
where $\Lambda_v$ is the self-adjoint operator $B_v^{(-1)}A_v$. Under condition \eqref{eq:kirchhoff}, $\Lambda_v$ is the multiplication operator by the number $-\frac{\alpha}{d_v}$ \cite{kuchment2004quantum}.

\begin{example}\label{ex1}
	Let $v\in\mathcal{V}$ be a vertex of degree $3$ that satisfies 
	a generalized Kirchoff vertex condition \eqref{eq:kirchhoff}, 
	with $A_v$ and $B_v$ given by \eqref{eq:generalized_kirchoff}.
	Then, if $\mv{1}_3$ denotes a $3\times 3$ matrix with all elements equal to $1$ and $\mv{I}_3$ is a $3\times 3$ identity matrix,
	$Q_v = \nicefrac13 \mv{1}_3$, $P_v = \nicefrac13(3\mv{I}_3 - \mv{1}_3)$, 
	$P_{1v}=\diag(0,1,1)$, and 
	$Q_{1v}= \diag(1,0,0)$.
	Further, the operator $\Lambda_v$ is the multiplication by the number $-\nicefrac{\alpha}{3}$.	
\end{example}

We let $L$ denote the elliptic operator defined in \eqref{eq:operatorL}, equipped with vertex conditions \eqref{eq:boundaryconditionsselfadjoint} and domain
\begin{align}\label{domainL}
	D(L):=\big\{u: u \in {\widetilde{H}}^2(\Gamma)\mbox{ and}~  A_vF(v)+\H(v)B_vF^{'}(v)=0 ~~~ \forall~v\in \mathcal{V}\big\}.
\end{align}
The following assumption is used in several results throughout the paper.
\begin{Assumption}\label{assum1}	
 	Define  $S:=\displaystyle\max_{v\in \mathcal{V}}\{\|\Lambda_v\|_2\}<\infty$, where $\|\Lambda_v\|_2$ is the spectral norm of $\Lambda_v$, i.e., the operator norm of $\Lambda_v$ with respect to the Euclidean norm. Recall that $\|\Lambda_v\|_2 = \sigma_{\max}(\Lambda_v)$, where $\sigma_{\max}(A)$ is the largest singular value of  $A$. Assume that 
	$\kappa_{0}^{2}>4Sl_{min}^{-1}$ and   ${2Sl_{max}\leq \H_0}$,  
	where ${l_{min}=\displaystyle\min_{e\in \mathcal{E}}l_e}$ and $l_{max} = \displaystyle\max_{e\in\mathcal{E}} l_e$.
\end{Assumption}

\begin{Remark}\label{rmk1}
	If  $A_v=0$ or $B_v=0$ in \eqref{BCs} (i.e., the cases of Dirichlet or Neumann vertex conditions), we have $S=0$. For the generalized Kirchhoff vertex conditions \eqref{eq:kirchhoff}, $S=\nicefrac{\lvert\alpha\rvert}{{d_0}}$, where $d_0 = \displaystyle\min_{v\in\mathcal{V}} d_v$.
\end{Remark}

\begin{Remark}\label{rmk2}
	The conditions $\kappa_{0}^{2}>4Sl_{e}^{-1}$ and $2Sl_e\leq \H_0$ in Assumption~\ref{assum1} can be written as $\frac{4S}{\kappa_{0}^{2}}<l_e\leq \frac{\H_0}{2S}$. Therefore, when $S\neq 0$, the assumption is satisfied if 
	$l_{min}, l_{max}\in \left(\frac{4S}{\kappa_{0}^{2}}, \frac{\H_0}{2S}\right]$.	
\end{Remark}

Assumption \ref{assum1} above can always be replaced by the following alternative:
\begin{Assumption}\label{assum1_alt}	
	For every $v\in\mathcal{V}$, the matrix $\Lambda_v$ is non-positive definite. 
\end{Assumption}

\begin{Remark}\label{rem:assum1_alt}
	In the case of Kirchhoff vertex conditions \eqref{eq:kirchhoff}, Assumption \ref{assum1_alt} translates to requiring $\alpha\geq 0$.
\end{Remark}

\begin{Theorem}\label{compactLinverse}
	If either Assumption~\ref{assum1} or Assumption~\ref{assum1_alt} holds, then $L$ defined in \eqref{eq:operatorL} is densely defined, self-adjoint, and positive definite with a compact inverse.
\end{Theorem}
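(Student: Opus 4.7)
The plan is to establish the four required properties in sequence, with the self-adjointness and the coercivity under Assumption~\ref{assum1} being the two points that require real work.

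\textbf{Density of the domain.} First I would observe that $\bigoplus_{e\in\mathcal{E}} C^{\infty}_c(e)$ is contained in $D(L)$, since any such function vanishes in a neighborhood of every vertex and therefore trivially satisfies $A_v F(v)+\H(v)B_v F'(v)=0$. This subspace is already dense in $L_2(\Gamma) = \bigoplus_e L_2(e)$ by the usual density of compactly supported smooth functions in $L_2$ of an interval, so $D(L)$ is dense.

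\textbf{Symmetry and self-adjointness.} For $u,v\in D(L)$, I would compute $(Lu,v)$ by integrating by parts on each edge, yielding
\begin{equation*}
(Lu,v)=\sum_{e\in\mathcal{E}}\int_e \bigl(\H u_e' v_e' + \kappa^2 u_e v_e\bigr)\,dx \;-\; \sum_{v\in\mathcal{V}}\H(v)\,G(v)^{\!\top}F'(v),
\end{equation*}
where $G(v)$, $F(v)$, $F'(v)$, $G'(v)$ collect the edge values and outward derivatives of $u$ and $v$ at $v$. Using the equivalent form \eqref{eq:boundaryconditionsselfadjoint} of the vertex conditions together with $P_v+Q_v=I$ and the orthogonality of $P_v$, $Q_v$, the boundary sum reduces to $\sum_v \langle \Lambda_v Q_v F(v), Q_v G(v)\rangle$, which is symmetric in $u,v$ because $\Lambda_v=B_v^{(-1)}A_v$ is self-adjoint. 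Hence $L$ is symmetric. Self-adjointness then follows directly from the maximal-rank plus self-adjointness of $A_vB_v^*$ assumption, via \cite[Theorem~6]{kuchment2004quantum}; the conclusion is invariant under the two alternative assumptions of the theorem, since those only control the size or sign of $\Lambda_v$, not the self-adjointness structure.

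\textbf{Positive definiteness.} Specializing the computation above to $v=u$ gives
\begin{equation*}
(Lu,u)=\sum_{e\in\mathcal{E}}\int_e\bigl(\H (u_e')^2 + \kappa^2 u_e^2\bigr)dx \;-\; \sum_{v\in\mathcal{V}}\langle \Lambda_v Q_v F(v), Q_v F(v)\rangle.
\end{equation*}
Under Assumption~\ref{assum1_alt} every $\Lambda_v$ is non-positive, so the boundary sum is non-negative and $(Lu,u)\geq \kappa_0^2\|u\|_{L_2(\Gamma)}^2$ immediately. Under Assumption~\ref{assum1} I would bound the boundary sum by $S\sum_e(u_e(0)^2+u_e(l_e)^2)$ and use the one-dimensional trace estimate
$$u_e(0)^2+u_e(l_e)^2 \le \tfrac{4}{l_e}\|u_e\|_{L_2(e)}^2+\tfrac{4 l_e}{3}\|u_e'\|_{L_2(e)}^2,$$
obtained from the fundamental theorem of calculus and Cauchy--Schwarz. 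Summing over edges and using $l_{min}\leq l_e\leq l_{max}$ yields
\begin{equation*}
(Lu,u)\ge \bigl(\H_0-\tfrac{4}{3}S l_{max}\bigr)\|\nabla u\|_{L_2(\Gamma)}^2 + \bigl(\kappa_0^2-\tfrac{4S}{l_{min}}\bigr)\|u\|_{L_2(\Gamma)}^2,
\end{equation*}
and the two conditions $\kappa_0^2>4S/l_{min}$ and $2Sl_{max}\le \H_0$ in Assumption~\ref{assum1} make both coefficients strictly positive, giving coercivity on $\widetilde{H}^1(\Gamma)$ and in particular $(Lu,u)\ge c\|u\|_{L_2(\Gamma)}^2$ with $c>0$.

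\textbf{Compact inverse.} Positive definiteness plus self-adjointness implies that $L:D(L)\to L_2(\Gamma)$ is bijective with $\|L^{-1}\|_{\mathcal{L}(L_2(\Gamma))}\le c^{-1}$, and $L^{-1}$ sends $L_2(\Gamma)$ boundedly into $D(L)\subset \widetilde{H}^2(\Gamma)\hookrightarrow \widetilde{H}^1(\Gamma)$. Since on each edge $H^1(e)\hookrightarrow L_2(e)$ is compact by Rellich--Kondrachov and $\mathcal{E}$ is finite, the embedding $\widetilde{H}^1(\Gamma)\hookrightarrow L_2(\Gamma)$ is compact, so $L^{-1}:L_2(\Gamma)\to L_2(\Gamma)$ is compact.

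\textbf{Expected obstacle.} The delicate step is the coercivity argument under Assumption~\ref{assum1}: the choice of trace inequality has to be sharp enough that the constants surviving after absorbing $\tfrac{4}{3}Sl_{max}$ into $\H_0$ and $4S/l_{min}$ into $\kappa_0^2$ remain strictly positive under exactly the bounds stated in Assumption~\ref{assum1}. Any looser version of the trace estimate would force a stronger hypothesis than the one given.
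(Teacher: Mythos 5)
Your proposal is correct and follows essentially the same route as the paper: density via $\bigoplus_e C_c^\infty(e)\subset D(L)$, self-adjointness via Kuchment's results for conditions \eqref{BCs}, positive definiteness from the quadratic form \eqref{positivedefinite} combined with an edge-wise trace estimate, and compactness of $L^{-1}$ via Rellich--Kondrachov. The only (immaterial) deviation is your sharper trace constant $\tfrac{4l_e}{3}$ in place of the constant $2l_e$ from \cite[Lemma~8]{kuchment2004quantum} used in the paper; both yield strictly positive coefficients under Assumption~\ref{assum1} (the paper's gradient coefficient $\H_0-2Sl_e$ is merely nonnegative, but positivity of $(Lu,u)$ is already guaranteed by the $L_2$ term).
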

\begin{proof} 
	We will only prove assuming Assumption \ref{assum1}, as the other case is simpler. By \cite[Theorem 1.4.1.1]{grisvard}, for any $u\in \widetilde{H}^2(\Gamma)$, we have ${\H\nabla u \in \widetilde{H}^1(\Gamma)}$ and thus that $Lu$ is well-defined. Let $D(L_e) = \{u|_e: u\in D(L)\}$ be the restriction of the functions in $D(L)$ to the edge $e$. As we have $\overline{C^{\infty}_{c}(e)}^{\|\cdot\|_{L_2(e)}}=L_2(e)$, and
	$ C^{\infty}_{c}(e)\subset D(L_e)\subset {{H}}^{2}(e)\subset L_2(e)$, $L$ is densely defined on each $e\in \mathcal{E}$ and, therefore, on $L_2(\Gamma)$.
	The self-adjointness of the operator $L$ follows from a straightforward adaptation of {\cite[Theorem 5]{kuchment2008quantum}}.
	To prove its positive definiteness,  note that for any $u\in D(L)$, we have
	\begin{align}\label{positivedefinite}
		(Lu, u)&=\left(\kappa^2u-\nabla \left(\H \nabla u\right), u\right)=(\kappa^2 u, u)+\left(-\nabla \left(\H \nabla u\right), u\right)\nonumber\\
		&=(\kappa^2 u, u)+\sum_{e\in \mathcal{E}}\int_{e} \H_e(x)  (u_e'(x))^2 ~dx-\sum_{v\in \mathcal{V}}\langle \Lambda_v F(v), F(v)\rangle ~~~~~~~~~~
	\end{align}
	by following the quadratic formulation in {\cite[Theorem 9]{kuchment2004quantum}}, where $\langle \cdot, \cdot\rangle$ denotes the standard inner product in $\mathbb{R}^{d_v}$.
	The last sum in \eqref{positivedefinite} can be bounded as
	\begin{align}\label{vertexcal}
		\sum_{v\in \mathcal{V}}\langle \Lambda_v F(v), F(v)\rangle&	\leq S\sum_{v\in \mathcal{V}}\lvert F(v)\rvert^2 
		\leq 2S \sum_{e\in \mathcal{E}}\left( \frac{2}{l_e}\|u\|^{2}_{L_2(e)}+l_e\|u_e'\|^{2}_{L_2(e)}\right),
	\end{align}
	where the last inequality is obtained using {\cite[Lemma 8]{kuchment2004quantum}}.
	Substituting \eqref{vertexcal} into \eqref{positivedefinite}, we obtain
	\begin{align}\label{positivedefinite2}
		(Lu, u)	& \geq \sum_{e\in\mathcal{E}}\kappa_{0}^{2}\|u\|^{2}_{L_2(e)}+ \sum_{e\in\mathcal{E}}\H_0\|u_e'\|^{2}_{L_2(e)}-2S \sum_{e\in\mathcal{E}}\big( \frac{2}{l_e}\|u\|^{2}_{L_2(e)}+l_e\|u_e'\|^{2}_{L_2(e)}\big)\nonumber\\
		&\geq \sum_{e\in\mathcal{E}}(\kappa_{0}^{2}-4Sl_{e}^{-1})\|u\|^{2}_{L_2(e)}+
		\sum_{e\in\mathcal{E}}(\H_0-2Sl_e)\|u_e'\|^{2}_{L_2(e)}.
	\end{align}

	If $\kappa_{0}^{2}>4Sl_{min}^{-1}$ and $2Sl_{max}\leq \H_0$, then	$(Lu,u)>0, \forall u\neq 0$, 
	which shows that $L$ is positive definite. 
	As $L$ is self-adjoint, it is closed (e.g., see \cite[Chapter VII, Section 3, Proposition 2]{Yosida}). As $L$ is positive definite, it follows from \cite[Chapter VII, Section 5, Corollaries 1 and 2]{Yosida} that $L$ has a continuous inverse $L^{-1}$. 
	
	Finally,  the inclusion  mapping $I:D(L)\subset L_2(\Gamma)\rightarrow L_2(\Gamma),$ is compact by the Rellich--Kondrachov compactness theorem (see \cite{evans2010partial}). Because ${L^{-1}}$ is continuous, the composition $(IL^{-1})$ is a compact mapping. That is, ${L^{-1}}$ is compact.
\end{proof}

\begin{Remark}\label{rmk3}
	If $S=0$ in Theorem~\ref{compactLinverse}, then $L$ is always positive definite, and by Remark~\ref{rmk2}, Theorem~\ref{compactLinverse} does not depend on any edge length.
\end{Remark}

As a consequence of Theorem~\ref{compactLinverse}, there exists a complete orthonormal system   $\{e_k\}_{k\in \mathbb{N}}$ on $L_2(\Gamma)$, which diagonalizes the operator $L$, and a set of eigenvalues $\{\lambda_k\}_{k\in \mathbb{N}}$ corresponding to 
$\{e_k\}_{k\in \mathbb{N}}$. We will now study in detail the behavior of the sequence of eigenvalues $\{\lambda_k\}_{k\in \mathbb{N}}$.

\subsection{Weyl's law for the differential operator $L$}

The following results follow from direct adaptations of some results of \cite{berkolaikodependence, berkolaiko2017elementary}, and 
\cite{Berkolaiko2013}. We provide the details for completeness. 
We begin by recalling that we are interested in vertex conditions of the form \eqref{BCs}. For each $v\in\mathcal{V}$, we take a pair of matrices $A_v$ and $B_v$ and recall that $\mathcal{A}_v = (A_v,\, B_v)$ is the horizontal concatenation of $A_v$ and $B_v$. The collection of these matrices across $\mathcal{V}$ is denoted by
$\mathcal{A} = \{\mathcal{A}_v\}_{v\in\mathcal{V}}$. We regard $\mathcal{A}$ as a block diagonal matrix of size $\sum_v d_v \times 2 \sum_v d_v$, where each block has size $d_v \times 2d_v$. We identify the space of all matrices $\mathcal{A}$ with $\mathbb{C}^{2\sum_v d_v^2}$. Let $U$ be the subset of $\mathbb{C}^{2\sum_v d_v^2}$ consisting of the matrices that satisfy the maximal rank condition:
$
U = \left\{\mathcal{A}\in \mathbb{C}^{2\sum_v d_v^2}: \mathcal{A}_v \text{ has maximal rank for any }v \in \mathcal{V}\right\}
$
and $U_s$ be the subset of $U$ consisting of the matrices satisfying the self-adjointness condition 
$U_s = \left\{\mathcal{A}\in U: A_vB_v^\ast \text{ is self-adjoint for any }v \in \mathcal{V}\right\}.$

The operator given by \eqref{eq:operatorL} is denoted by $L_\mathcal{A}$, with domain $D(L_\mathcal{A})$ given by \eqref{domainL}, where $\mathcal{A}=(A_v\, B_v)$. We observe that under more general conditions than those in Theorem~\ref{compactLinverse} the spectrum of $L_{\mathcal{A}}$ is discrete. Indeed, we have that the operator $L$, with $\kappa\geq 0$ (in particular we can take $L=-\Delta$), under the vertex conditions \eqref{eq:boundaryconditionsselfadjoint} has discrete spectrum. To this end, we follow \cite[Theorem~3.1.1]{Berkolaiko2013}.

\begin{Theorem}\label{thm:spectrumdiscreteL_A}
	Let $L$ be given by \eqref{eq:operatorL}, such that $\kappa\in L_\infty(\Gamma)$ and $H$ is Lipschitz and positive. For each $\mathcal{A}\in U_s$, the resolvent $(L_{\mathcal{A}} - i I)^{-1}$ is a compact operator in $L_2(\Gamma)$. In particular, the spectrum of $L_{\mathcal{A}}$ consists exclusively of isolated eigenvalues $\lambda_j(L_{\mathcal{A}})$ of finite multiplicity with $\lambda_j \to \infty$ as $j\to \infty$.
\end{Theorem}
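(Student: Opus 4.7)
The plan is to follow the standard two–step strategy for self-adjoint differential operators: show that $(L_\mathcal{A}-iI)^{-1}$ maps $L_2(\Gamma)$ continuously into $\widetilde H^2(\Gamma)$, then invoke the compactness of the embedding $\widetilde H^2(\Gamma)\hookrightarrow L_2(\Gamma)$. First, self-adjointness of $L_\mathcal{A}$ on $D(L_\mathcal{A})$ follows for every $\mathcal{A}\in U_s$ by a direct adaptation of \cite[Theorem~5]{kuchment2008quantum}, exactly as in the proof of Theorem~\ref{compactLinverse}; crucially, that argument uses only the structural conditions $\mathcal{A}\in U_s$ (maximal rank, $A_v B_v^\ast$ self-adjoint) together with the regularity of $\kappa$ and $\H$, and does \emph{not} invoke Assumption~\ref{assum1}. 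Consequently $\sigma(L_\mathcal{A})\subset\mathbb R$, so $i\in\rho(L_\mathcal{A})$ and $\|(L_\mathcal{A}-iI)^{-1}\|_{\mathcal L(L_2(\Gamma))}\leq 1$ by the spectral theorem (working with the complexification of $L_2(\Gamma)$).

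The central step is the a priori bound $\|(L_\mathcal{A}-iI)^{-1}f\|_{\widetilde H^2(\Gamma)}\lesssim \|f\|_{L_2(\Gamma)}$. Writing $u=(L_\mathcal{A}-iI)^{-1}f\in D(L_\mathcal{A})$ and taking the inner product of $(L_\mathcal{A}-iI)u=f$ with $u$, the imaginary and real parts give respectively $\|u\|_{L_2(\Gamma)}\leq \|f\|_{L_2(\Gamma)}$ and $(L_\mathcal{A} u,u)=\mathrm{Re}(f,u)\leq \|f\|_{L_2(\Gamma)}^2$. The quadratic form identity \eqref{positivedefinite} remains valid for every $\mathcal{A}\in U_s$; although the vertex contribution $\sum_{v\in\mathcal V}\langle\Lambda_v F(v),F(v)\rangle$ need not have a sign in the absence of Assumption~\ref{assum1}, it can be bounded by a variable-weight Sobolev trace inequality of the form $|u_e(v)|^2\leq \epsilon\|u_e'\|_{L_2(e)}^2+C_\epsilon\|u_e\|_{L_2(e)}^2$ on each edge. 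Choosing $\epsilon$ small enough that $S\epsilon<\H_0$, so the $\|u'\|_{L_2}^2$ contribution is absorbed by the coercive term $\H_0\|u'\|_{L_2(\Gamma)}^2$ in \eqref{positivedefinite}, yields a G\aa{}rding inequality
\[
(L_\mathcal{A} u,u)\;\geq\; c_1\|u\|_{H^1(\Gamma)}^2-c_0\|u\|_{L_2(\Gamma)}^2
\]
with constants $c_0,c_1>0$ depending only on $\kappa_0$, $\H_0$, $\|\H'\|_\infty$ and $S$. Combined with the above, this produces $\|u\|_{H^1(\Gamma)}\lesssim \|f\|_{L_2(\Gamma)}$. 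Upgrading to $\widetilde H^2(\Gamma)$ is routine edgewise elliptic regularity: since $\H_e$ is Lipschitz and bounded below by $\H_0>0$, the restriction of $(L_\mathcal{A}-iI)u=f$ to $e$ rearranges to
\[
u_e'' \;=\; \H_e^{-1}\bigl((\kappa_e^2-i)u_e - \H_e' u_e' - f_e\bigr),
\]
whence $\|u_e\|_{H^2(e)}\lesssim \|u_e\|_{H^1(e)}+\|f_e\|_{L_2(e)}$, and summing over the finite edge set $\mathcal E$ closes the estimate.

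Compactness of $(L_\mathcal{A}-iI)^{-1}$ on $L_2(\Gamma)$ is then immediate, since it factors as a bounded map $L_2(\Gamma)\to \widetilde H^2(\Gamma)$ followed by the compact embedding $\widetilde H^2(\Gamma)\hookrightarrow L_2(\Gamma)$, the latter by Rellich--Kondrachov applied on each of the finitely many edges. The spectral statement then follows from the spectral theorem for self-adjoint operators with compact resolvent, and $\lambda_j\to+\infty$ because the G\aa{}rding inequality forces $L_\mathcal{A}\geq -c_0 I$. The principal obstacle is precisely this G\aa{}rding inequality: Theorem~\ref{compactLinverse} used Assumption~\ref{assum1} to absorb the vertex term into the principal part and obtain outright positivity with the fixed bound \eqref{vertexcal}, whereas here one must instead use the variable-$\epsilon$ trace estimate and accept the subtracted $c_0\|u\|_{L_2(\Gamma)}^2$—which is harmless for the resolvent-compactness conclusion but is the one place where the argument genuinely departs from the proof of Theorem~\ref{compactLinverse}.
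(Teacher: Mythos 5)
Your proposal is correct and follows essentially the same route as the paper's proof: self-adjointness gives existence of $(L_{\mathcal{A}}-iI)^{-1}$ as a bounded map from $L_2(\Gamma)$ into $D(L_{\mathcal{A}})\subset\widetilde{H}^2(\Gamma)$, and the Rellich--Kondrachov embedding then yields compactness and the spectral conclusions. The only difference is one of detail: the paper asserts the continuity of the resolvent into $\widetilde{H}^2(\Gamma)$ in a single line, whereas you supply the G{\aa}rding inequality (via the variable-$\epsilon$ trace estimate) and the edgewise elliptic regularity that justify it without Assumption~\ref{assum1} --- a worthwhile elaboration, but not a different argument.
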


\begin{proof}
	As $L_{\mathcal{A}}$ is self-adjoint, the resolvent $(L_{\mathcal{A}} - i I)^{-1}$ exists and continuously maps $L_2(\Gamma)$ into $D(L_{\mathcal{A}}) \subset \widetilde{H}^2(\Gamma)$. By the Rellich--Kondrachov compactness theorem \cite[see]{evans2010partial}, the resolvent is compact, which proves the claims about the spectrum and the compactness of the resolvent.
\end{proof}

We can now obtain an eigenvalue interlacing inequality for $L_{\mathcal{A}}$, where ${\mathcal{A}\in U_s}$. By following \cite{berkolaikodependence}, we examine the effect of modifying the vertex condition at a single vertex $v\in\mathcal{V}$. We let $\Gamma_v$ be a compact (not necessarily connected) metric graph with a distinguished vertex $v$. We assume that all vertices $w\in\mathcal{V}$, where $w\neq v$, satisfy some self-adjoint vertex conditions, that is, vertex conditions such that the corresponding operator is self-adjoint, whereas $v$ is endowed with the following condition with coefficient $\alpha$:
\begin{equation}\label{eq:conditions_alpha}
	\{u\hbox{ is continuous at $v$ and $\H(v) \sum_e \partial_e u(v) = \alpha u(v)$}
	\},
\end{equation}
or, 
\begin{equation}\label{eq:conditions_alpha2}
	\{u\hbox{ is continuous at $v$ and $\frac{\H(v)}{\alpha} \sum_e \partial_e u(v) = u(v)$}\},
\end{equation}
for  $\alpha\neq 0$. Note that taking $\alpha\to \infty$ in \eqref{eq:conditions_alpha2} gives Dirichlet vertex conditions. 
By Theorem~\ref{thm:spectrumdiscreteL_A}, the spectrum of $L_{\mathcal{A}}$ consists of isolated eigenvalues that tend to infinity, and we assume that they are ordered so that: ${\lambda_1(L_{\mathcal{A}})\leq \lambda_2(L_{\mathcal{A}}) \leq \cdots}$.

\begin{Theorem}\label{thm:interlacing}
	Let $\mathcal{A}_\alpha$ be the matrix associated with the vertex condition \eqref{eq:conditions_alpha} (or, equivalently, to \eqref{eq:conditions_alpha2}), with arbitrary self-adjoint conditions at vertices $w\neq v$. Further, let $\mathcal{A}_{\widetilde{\alpha}}$ be the matrix obtained by changing the condition at vertex $v$ from $\alpha$ to $\widetilde{\alpha}$. If $-\infty < \alpha < \widetilde{\alpha} \leq \infty$, where, by \eqref{eq:conditions_alpha2}, $\widetilde{\alpha}=\infty$ corresponds to the Dirichlet condition, then
	$\lambda_n(L_{\mathcal{A}_\alpha}) \leq \lambda_n(L_{\mathcal{A}_{\widetilde{\alpha}}}) \leq \lambda_{n+1} (L_{\mathcal{A}_\alpha}).$
\end{Theorem}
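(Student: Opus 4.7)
The plan is to express both operators through their associated sesquilinear forms and apply the min-max (Courant--Fischer) characterization of eigenvalues, exploiting the fact that the difference between the two forms is localized at the single vertex $v$. Arguing as in the proof of Theorem~\ref{compactLinverse} and following \cite[Theorem~9]{kuchment2004quantum}, the quadratic form $q_\alpha$ associated with $L_{\mathcal{A}_{\alpha}}$ can be written, for $u$ in its form domain, as
$$q_\alpha[u] = \sum_{e\in\mathcal{E}}\int_e \bigl(\H_e(x)\,(u_e'(x))^2 + \kappa_e^2(x)\, u_e(x)^2\bigr)\,\md x + \alpha\, u(v)^2 + R[u],$$
where $R[u]$ collects the (self-adjoint) vertex contributions at the vertices $w\neq v$, which coincide with those appearing in $q_{\widetilde{\alpha}}$. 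For finite $\widetilde{\alpha}$, the two forms share the same form domain; for $\widetilde{\alpha}=\infty$, the constraint $u(v)=0$ is built into the form domain of $q_{\widetilde{\alpha}}$ and the boundary term $\widetilde{\alpha}\,u(v)^2$ is absent. Both forms are symmetric, closed and bounded below, and their Friedrichs extensions reproduce the self-adjoint operators supplied by Theorem~\ref{thm:spectrumdiscreteL_A}.

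By the min-max principle,
$$\lambda_n(L_{\mathcal{A}_{\bullet}}) \;=\; \min_{\substack{V\subset D(q_{\bullet}) \\ \dim V = n}}\;\max_{0\neq u\in V}\frac{q_{\bullet}[u]}{\|u\|_{L_2(\Gamma)}^2}.$$
The lower bound $\lambda_n(L_{\mathcal{A}_{\alpha}}) \leq \lambda_n(L_{\mathcal{A}_{\widetilde{\alpha}}})$ is then immediate: if $\widetilde{\alpha}$ is finite, $D(q_{\widetilde{\alpha}}) = D(q_\alpha)$ and $q_{\widetilde{\alpha}}[u] - q_\alpha[u] = (\widetilde{\alpha}-\alpha)\, u(v)^2 \geq 0$ pointwise on the form domain, so the minimax of $q_{\widetilde{\alpha}}$ dominates that of $q_\alpha$; if $\widetilde{\alpha}=\infty$, the admissible $n$-dimensional subspaces for $q_{\widetilde{\alpha}}$ form a subclass of those for $q_\alpha$, and the two forms agree on $D(q_{\widetilde{\alpha}})$, yielding the same conclusion.

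For the upper bound $\lambda_n(L_{\mathcal{A}_{\widetilde{\alpha}}}) \leq \lambda_{n+1}(L_{\mathcal{A}_{\alpha}})$ I would use the classical codimension-one interlacing trick. Let $V_{n+1}\subset D(q_\alpha)$ be the span of the first $n+1$ eigenfunctions of $L_{\mathcal{A}_{\alpha}}$ and set $W = \{u\in V_{n+1} : u(v) = 0\}$. Since continuity at $v$ is encoded in $D(q_\alpha)$, the evaluation $u\mapsto u(v)$ is a well-defined continuous linear functional on the finite-dimensional space $V_{n+1}$, so $\dim W \geq n$. Every $u\in W$ belongs to $D(q_{\widetilde{\alpha}})$ and satisfies $q_{\widetilde{\alpha}}[u] = q_\alpha[u]$, whence
$$\lambda_n(L_{\mathcal{A}_{\widetilde{\alpha}}}) \;\leq\; \max_{0\neq u\in W}\frac{q_{\widetilde{\alpha}}[u]}{\|u\|_{L_2(\Gamma)}^2} \;=\; \max_{0\neq u\in W}\frac{q_\alpha[u]}{\|u\|_{L_2(\Gamma)}^2} \;\leq\; \max_{0\neq u\in V_{n+1}}\frac{q_\alpha[u]}{\|u\|_{L_2(\Gamma)}^2} \;=\; \lambda_{n+1}(L_{\mathcal{A}_{\alpha}}).$$

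The main obstacle is the careful book-keeping of the form domains, particularly for the limit $\widetilde{\alpha}=\infty$: one must verify that prescribing the Dirichlet condition at a single vertex corresponds exactly to adding the linear constraint $u(v) = 0$ to the form domain of $q_\alpha$, with no residual boundary contribution, and that the trace $u\mapsto u(v)$ is well-defined and continuous on $D(q_\alpha)$ so that the codimension argument delivers $\dim W \geq n$. Both facts are standard in the quantum-graph literature \cite{Berkolaiko2013,kuchment2004quantum,berkolaikodependence} and can be invoked by reference rather than re-derived here.
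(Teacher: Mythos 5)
Your proof is correct and follows essentially the same route as the paper, which simply invokes the quadratic-form expression \eqref{positivedefinite} and the min-max/rank-one-perturbation interlacing argument of \cite[Theorem 5.1]{berkolaikodependence} — precisely the argument you have written out in full. The form decomposition with the $\alpha\,u(v)^2$ term, the monotonicity for the first inequality, and the codimension-one constraint $u(v)=0$ for the second are all exactly the intended steps.
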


\begin{proof}
	The proof follows directly from the expression of the bilinear form \eqref{positivedefinite} and identical arguments, as in the proof of \cite[Theorem 5.1]{berkolaikodependence}.
\end{proof}

As a corollary of this result, we obtain a Weyl's law for the eigenvalues in case of generalized Kirchhoff vertex conditions.
\begin{Corollary}\label{cor:genkirchhoffweyl}
	Let $L$ be the second-order elliptic operator given by \eqref{eq:operatorL}, 
	with coefficients satisfying \eqref{eq:condcoeff1} and \eqref{eq:condcoeff2}, 
	endowed with generalized Kirchhoff vertex conditions \eqref{eq:kirchhoff}.
	Further, either let Assumption \ref{assum1_alt} hold, or let Assumption~\ref{assum1} hold with 
	$S = \frac{\lvert \alpha\rvert}{d_0}$ (i.e., $S$ for generalized Kirchhoff vertex conditions). 	
	Then, constants $C_1,C_2>0$ exist such that
	\begin{equation}\label{eq:weylgenkir}
		\forall n\in\mathbb{N}: \qquad C_1 n^2 \leq \lambda_{n} \leq C_2 n^2,
	\end{equation}
	where $\{\lambda_k\}_{k\in\mathbb{N}}$ are the eigenvalues of $L$ ordered in increasing order.
	
\end{Corollary}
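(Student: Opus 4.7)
The plan is to sandwich the generalized Kirchhoff eigenvalues between shifted Dirichlet eigenvalues via an iterated application of Theorem~\ref{thm:interlacing}, reducing the problem to the classical Sturm--Liouville asymptotics on an interval, for which Weyl's law is elementary.

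First, I would introduce the comparison operator $L_D$, obtained from \eqref{eq:operatorL} by imposing the Dirichlet condition $u(v)=0$ at every vertex $v\in\mathcal{V}$. Since the Dirichlet condition decouples the vertices, the eigenvalue problem for $L_D$ splits into $\lvert\mathcal{E}\rvert$ independent Sturm--Liouville problems $-(\H_e u_e')' + \kappa_e^2 u_e = \lambda u_e$ on $[0,l_e]$ with zero data at both endpoints. Applying the min--max principle to the Rayleigh quotient $\int_e \H_e (u_e')^2 + \int_e \kappa_e^2 u_e^2$ over $H^1_0(e)$, together with the coefficient bounds $0<\kappa_0\leq \kappa\leq \|\kappa\|_{L_\infty(\Gamma)}$ and $0<\H_0\leq \H\leq \|\H\|_{L_\infty(\Gamma)}$, sandwiches each edge problem between two constant-coefficient Dirichlet problems whose eigenvalues grow like $k^2$. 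Summing the edge counting functions then yields constants $c_1,c_2>0$ such that $c_1 n^2 \leq \lambda_n(L_D) \leq c_2 n^2$ for every $n\in\mathbb{N}$.

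Next, I would transfer these asymptotics to $L$ by changing one vertex condition at a time. Enumerate the vertices as $v_1,\ldots,v_M$ with $M=\lvert\mathcal{V}\rvert$, and define a chain $L=L^{(0)},L^{(1)},\ldots,L^{(M)}=L_D$, where $L^{(k)}$ is obtained from $L^{(k-1)}$ by replacing the generalized Kirchhoff condition at $v_k$ (parameter $\alpha$) with the Dirichlet condition, which by the reformulation \eqref{eq:conditions_alpha2} corresponds to taking $\widetilde{\alpha}=\infty$. All intermediate vertex conditions remain self-adjoint, so Theorem~\ref{thm:interlacing} applies at each step and gives
$$\lambda_n(L^{(k-1)}) \leq \lambda_n(L^{(k)}) \leq \lambda_{n+1}(L^{(k-1)}).$$
Composing these $M$ inequalities produces the two-sided bound $\lambda_n(L) \leq \lambda_n(L_D) \leq \lambda_{n+M}(L)$.

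Combining the sandwich with the Dirichlet Weyl bounds yields the claim. The upper bound $\lambda_n \leq c_2 n^2$ is immediate. For the lower bound, $\lambda_n(L)\geq \lambda_{n-M}(L_D)\geq c_1(n-M)^2\geq (c_1/4)n^2$ whenever $n\geq 2M$, while for the finitely many values $n=1,\ldots,2M$ positivity of the spectrum from Theorem~\ref{compactLinverse} (under either Assumption~\ref{assum1} or Assumption~\ref{assum1_alt}) allows us to absorb these into the constant by choosing $C_1 = \min\bigl\{c_1/4,\,\min_{1\leq n\leq 2M}\lambda_n/n^2\bigr\}>0$. The only delicate point in the argument is to verify at each step of the interlacing chain that $\widetilde{\alpha}=\infty$ really corresponds to the Dirichlet condition (done via the reformulation \eqref{eq:conditions_alpha2}) and that the intermediate operators are self-adjoint, both of which hold by construction of the chain.
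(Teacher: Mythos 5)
Your proof is correct and follows essentially the same route as the paper: iterated application of the interlacing inequality (Theorem~\ref{thm:interlacing}), one vertex at a time, reduces the claim to a Weyl law for the fully Dirichlet operator $L_D$, which decouples into Sturm--Liouville problems on the individual edges. The only cosmetic difference is that you establish the edge-wise asymptotics by min--max comparison with constant-coefficient Dirichlet problems, whereas the paper cites a known Sturm--Liouville Weyl theorem; your handling of the index shift $\lambda_n(L)\leq\lambda_n(L_D)\leq\lambda_{n+M}(L)$ and of the finitely many small $n$ is sound.
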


\begin{proof}
	By the same arguments of the proof of 
	\cite[Lemma 4.4]{berkolaiko2017elementary}, we can use the interlacing 
	inequality (Theorem~\ref{thm:interlacing}) to demonstrate that the result 
	follows from a corresponding Weyl's law for the operator $L_D$, 
	given by \eqref{eq:operatorL} endowed with Dirichlet vertex conditions. 
	Next, when we endow $L_D$ with Dirichlet vertex conditions, 
	the operator becomes completely decoupled across the edges, meaning 
	that we can regard $\Gamma$ as a disjoint union of intervals, and 
	$L_D$ as the operator of \eqref{eq:operatorL} and Dirichlet 
	boundary conditions on each interval. As $\Gamma$ is compact, it 
	has finitely many edges, each with a finite length. Therefore, it is 
	sufficient to demonstrate that, for each interval $J$ with a finite length, the 
	operator $L$ given by \eqref{eq:operatorL} acting on 
	$H^2(J)\cap H^1_0(J)$ (i.e., it satisfies Dirichlet boundary 
	conditions on $J$) obeys Weyl's law, which holds by 
	\cite[Theorem 6.3.1]{davies1996spectral}.
\end{proof}

We end this section by proving that $L$ satisfies a Weyl's law under general vertex conditions. However, compared to the case of generalized Kirchhoff vertex conditions, we in this case will require an additional assumption which we now introduce. Given a family of matrices $(C_v)_{v\in\mathcal{V}}$, $\diag(C_v)_{v\in \mathcal{V}}$ denotes the block diagonal matrix with diagonal entries given by $C_v$, $v\in \mathcal{V}$. If
$(A_v,B_v)_{v\in\mathcal{V}}$ is a collection of matrices such that
for every $v\in \mathcal{V}$, $(A_v,B_v)$ has maximal rank and $A_vB_v^\ast    $
is self-adjoint, then $(A,B)$ has maximal rank and $AB^\ast$ is self-adjoint,
where $A = \diag(A_v)_{v\in\mathcal{V}}$ and ${{B}=\diag(\H(v)B_v)_{v\in\mathcal{V}}}$. We can similarly define $P = \ker B$ and $Q = I-P$.
Next, we let $\mathcal{L} = \left( {B}\vert_{\textrm{ran} {B}^\ast}\right)^{-1} A Q$, and $\sigma(\mathcal{L})$ be the spectrum of $\mathcal{L}$. We define 
$\lambda_{\min}^+ = \inf\{ \lambda: \lambda>0\hbox{ and }\lambda \in \sigma(\mathcal{L})\},$
with the convention that $\inf \emptyset = +\infty$. Following 
\cite{Odzak2019Weyl}, we define 
\begin{equation}\label{eq:lfunctionweyl}
	\ell(s) = \frac{\log(2\vert \mathcal{E}\vert)}{s} + \frac{2}{s} \textrm{arctanh}\left(\frac{s}{\lambda_{\min}^+}\right).
\end{equation}
We let $\sigma=+\infty$ if $\lambda_{\min}^+ = +\infty$, and let $\sigma \in (0,\lambda_{\min}^+)$ be the point in which $l$ attains its unique minimum if $\lambda_{\min}^+$ is finite.

\begin{Theorem}\label{cor:weyllaw}
	Let $L$ be the second-order elliptic operator given by \eqref{eq:operatorL}, 
	with coefficients satisfying \eqref{eq:condcoeff1} and \eqref{eq:condcoeff2}, 
	endowed with vertex conditions \eqref{BCs} and satisfying Assumption~\ref{assum1} or \ref{assum1_alt}. Assume that for every $e\in\mathcal{E}$, $l_e > \ell(\sigma)$, where
	$\ell$ is given in \eqref{eq:lfunctionweyl}. 
	Then, constants ${C_1,C_2>0}$ exist such that
	\begin{equation}\label{eq:eiga}
		\forall n\in\mathbb{N}: \qquad C_1 n^2 \leq \lambda_{n} \leq C_2 n^2,
	\end{equation}
	where $\{\lambda_k\}_{k\in\mathbb{N}}$ are the eigenvalues of $L$ ordered in increasing order.
\end{Theorem}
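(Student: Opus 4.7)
The plan is to reduce the claim to the general Weyl-type theorem for self-adjoint operators on compact quantum graphs from \cite{Odzak2019Weyl}, applied to a reference Laplace operator $L_0$, and then to transfer the resulting asymptotic to $L$ through a min--max comparison of the quadratic forms. First, I would introduce $L_0$, the operator acting edge-wise as $L_0 u = -u_e''$ on each $e\in\mathcal{E}$ and endowed with the same vertex conditions \eqref{BCs} as $L$. By Theorem~\ref{thm:spectrumdiscreteL_A}, $L_0$ is self-adjoint with discrete spectrum. Since the operator $\mathcal{L} = (B|_{\mathrm{ran}\, B^\ast})^{-1} A Q$, the number $\lambda_{\min}^+$, and hence the function $\ell$ in \eqref{eq:lfunctionweyl} depend only on the vertex data $\{A_v, \H(v) B_v\}_{v\in\mathcal{V}}$, the hypothesis $l_e > \ell(\sigma)$ is exactly the edge-length condition required by \cite{Odzak2019Weyl} for $L_0$. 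That theorem therefore produces positive constants $c_1', c_2'$ such that the ordered eigenvalues $\{\mu_n\}_{n\in\mathbb{N}}$ of $L_0$ satisfy $c_1' n^2 \leq \mu_n \leq c_2' n^2$.

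Next I would transfer this asymptotic to $L$ via the min--max principle. By \cite[Theorem~9]{kuchment2004quantum}, $L$ and $L_0$ share the same form domain and their forms admit the representations
\begin{align*}
a_L(u,u) &= \sum_{e\in\mathcal{E}} \int_e \H_e(x)\bigl(u_e'(x)\bigr)^2\,\md x + \int_\Gamma \kappa^2 u^2\,\md x - V(u), \\
a_{L_0}(u,u) &= \sum_{e\in\mathcal{E}} \int_e \bigl(u_e'(x)\bigr)^2\,\md x - V(u),
\end{align*}
with $V(u) = \sum_{v\in\mathcal{V}}\langle\Lambda_v F(v), F(v)\rangle$. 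Combining the two-sided bounds \eqref{eq:condcoeff1}--\eqref{eq:condcoeff2} on $\H,\kappa$ with the trace inequality from \cite[Lemma~8]{kuchment2004quantum} used in \eqref{vertexcal} to absorb the $V$-mismatch into a bounded $\|u\|_{L_2(\Gamma)}^2$ correction, I would derive positive constants $\widetilde c_1, \widetilde c_2$ and real constants $\widetilde C_1,\widetilde C_2$ such that
\begin{equation*}
\widetilde c_1 \, a_{L_0}(u,u) - \widetilde C_1 \|u\|_{L_2(\Gamma)}^2 \leq a_L(u,u) \leq \widetilde c_2 \, a_{L_0}(u,u) + \widetilde C_2 \|u\|_{L_2(\Gamma)}^2.
\end{equation*}
The min--max characterization then transfers this form comparison into $\widetilde c_1 \mu_n - \widetilde C_1 \leq \lambda_n \leq \widetilde c_2 \mu_n + \widetilde C_2$; combining with the Weyl estimate for $\mu_n$ and absorbing finitely many small indices into the constants yields the claim $C_1 n^2 \leq \lambda_n \leq C_2 n^2$.

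The main obstacle is the form-comparison step. The term $V(u)$ appears in both $a_L$ and $a_{L_0}$ with coefficient one, while the gradient term is weighted by $\H$ in the former and by $1$ in the latter, so no direct sub-/super-form inequality is immediately available. The resolution is to apply the trace inequality to dominate $|V(u)|$ by $\varepsilon \sum_{e\in\mathcal{E}} \int_e (u_e'(x))^2\,\md x + C(\varepsilon)\|u\|_{L_2(\Gamma)}^2$ and then to choose $\varepsilon$ small enough that, thanks to the slack in the gradient coefficient provided by Assumption~\ref{assum1} or \ref{assum1_alt}, the leading constant remains strictly positive in both inequalities. The residual $\|u\|_{L_2(\Gamma)}^2$ term only shifts the eigenvalues by a bounded amount and is asymptotically dominated by the $n^2$ growth supplied by the Weyl asymptotic for $L_0$.
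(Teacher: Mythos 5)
Your proposal follows essentially the same route as the paper: both reduce the claim to the Weyl law of \cite{Odzak2019Weyl} for a constant-coefficient reference operator with the same vertex conditions (the paper uses $\widetilde{L}=\kappa_0^2-\H_0\Delta$, whose eigenvalues are an affine shift of those of $-\Delta$, while you use $-\Delta$ directly), and then transfer the asymptotics to $L$ by a min--max comparison of the quadratic forms, absorbing the vertex term via the trace inequality of \cite[Lemma~8]{kuchment2004quantum} under Assumption~\ref{assum1} or \ref{assum1_alt}. The only cosmetic difference is that the paper obtains a purely multiplicative two-sided form comparison (both forms being equivalent to $\|\cdot\|_{H^1(\Gamma)}^2$), whereas you allow an additive $\|u\|_{L_2(\Gamma)}^2$ correction and discard finitely many indices; both variants are valid.
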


\begin{proof}
	The goal is to obtain a Weyl's law for $L$ using
	Weyl's law from the Laplacian given in \cite[Theorem 2]{Odzak2019Weyl}. Observe that the existence of the eigendecomposition for the Laplacian follows from Theorem \ref{thm:spectrumdiscreteL_A}, whereas the Weyl's law for its eigenvalues follows from \cite[Theorem 2]{Odzak2019Weyl}.
	To this end, we let $\widetilde{L} = \kappa_0^2 - \H_0\Delta$ with vertex 
	conditions $A_vF(v) + \H(v) B_v F'(v) = 0, v\in\mathcal{V}$. 
	It is apparent that the same proof of Theorem~\ref{compactLinverse}
	also works for the operator $\widetilde{L}$, with the same bounds on
	the lengths of the edges. Let $\lambda_1^{\widetilde{L}}\leq \lambda_2^{\widetilde{L}} \leq \cdots$ and 
	$\lambda_1^{-\Delta}\leq \lambda_2^{-\Delta} \leq \cdots$
	be the eigenvalues of $\widetilde{L}$ and $-\Delta$ with the same vertex conditions, respectively. Then,
	\begin{equation}\label{eq:compeigenval}
		\lambda_j^{\widetilde{L}} = \kappa_0^2 + \H_0 \lambda_j^{-\Delta}.
	\end{equation}
	Next, we let $A = \diag(A_v)_{v\in\mathcal{V}}$ and $\widetilde{B} = \diag(\H_0 B_v)_{v\in\mathcal{V}}$. Because $\H_0>0$, $(A_v,B_v)$ has full rank and $A_vB_v^\ast$ is self-adjoint, for every $v\in\mathcal{V}$, it follows that $(A,\widetilde{B})$ has full rank and $A\widetilde{B}^\ast$ is self-adjoint. Furthermore, 	by assumption, $\min_{e\in\mathcal{E}} l_e > \ell(\sigma)$. Hence, by \cite[Theorem 2]{Odzak2019Weyl}, the eigenvalues $(\lambda_j^{-\Delta})_{j\in\mathbb{N}}$ satisfy Weyl's law. From \eqref{eq:compeigenval}, it follows that the eigenvalues of $\widetilde{L}$, $(\lambda_j^{\widetilde{L}})_{j\in\mathbb{N}}$, satisfy Weyl's law.
	
	We compare the eigenvalues of $L$ to the eigenvalues of $\widetilde{L}$
	to obtain a Weyl's law for $(\lambda_j)_{j\in\mathbb{N}}$, the eigenvalues of $L$.
	By \eqref{positivedefinite}, we have
	\begin{equation}\label{eq:dirichletformLtilde}
		(\widetilde{L}u,u) = (\kappa_0^2 u, u)+\sum_{e\in\mathcal{E}}\int_{e} \H_0 (u_e'(x))^2\,dx-\sum_{v\in\mathcal{V}} \frac{\H_0}{\H(v)}\langle \Lambda_v F(v), F(v)\rangle.
	\end{equation}
	Moreover, $\kappa$ and $\H$ are bounded ($\H$ is continuous and $\Gamma$ is compact). Therefore, by \eqref{positivedefinite}, \eqref{eq:dirichletformLtilde}, and
	\eqref{vertexcal} and the assumptions on the lengths of the edges, constants $c_1, C_1, c_2,C_2>0$ exist such that
	$$c_1 \|u\|_{H^1(\Gamma)}^2 \leq (Lu, u) \leq C_1 \|u\|_{H^1(\Gamma)}^2, \quad c_2 \|u\|_{H^1(\Gamma)}^2 \leq (\widetilde{L}u, u) \leq C_2 \|u\|_{H^1(\Gamma)}^2.$$
	In particular,
	$\frac{c_1}{C_2} (\widetilde{L}u, u) \leq (Lu,u) \leq \frac{C_1}{c_2} (\widetilde{L}u,u).$
	We divide all terms in these inequalities by $\|u\|_{L_2(\Gamma)}^2$
	and use the min-max principle to demonstrate that, for every $j\in\mathbb{N}$,
	$\frac{c_1}{C_2} \lambda_j^{\widetilde{L}} \leq \lambda_j \leq \frac{C_1}{c_2} \lambda_j^{\widetilde{L}}.$
	Finally, the eigenvalues $(\lambda_j^{\widetilde{L}})_{j\in\mathbb{N}}$
	satisfy Weyl's law; thus, it follows that $\theta,\Theta>0$ exist such that
	$\theta \frac{c_1}{C_2} j^2 \leq \lambda_j \leq \Theta \frac{C_1}{c_2} j^2.$
\end{proof}

\section{Existence and uniqueness of solutions}\label{sec:solutions}
Let us begin by introducing various fractional-order Sobolev spaces that we will need, and which are summarized in Table~\ref{tab:fractionalspaces}. To do so, recall the real interpolation of Hilbert spaces, e.g., \cite[Appendix~A]{BSW2022}. For two Hilbert spaces $H_1\subset H_0$, we let $(H_0,H_1)_s$ denote the real interpolation of order $0<s<1$ between $H_0$ and $H_1$, and let $(\cdot,\cdot)_{(H_0,H_1)_s}$ denote its induced inner product. 
For $e\in\mathcal{E}$ and $s>0$, the fractional Sobolev space $H^s(e)$ is defined in the usual way as for an interval  \cite[p.77]{mclean}. 
Similarly, given a system $\{e_k,\lambda_k\}$ of eigenvectors and eigenvalues of $L$, the fractional operator $L^\beta$ for ${\beta>0}$ is defined in the spectral sense, i.e., the action of $L^\beta$ on $\phi\in D(L^\beta)$ is $L^\beta\phi = \sum_{k\in\mathbb{N}}\lambda_k^\beta (\phi, e_k)e_k$, where the domain is $ D(L^\beta) = \dot{H}_L^{2\beta}(\Gamma)$, which is a Hilbert space equipped with its natural inner product, and its dual is denoted by $\dot{H}_L^{-2\beta}(\Gamma)$ (see, e.g., \cite{BKK2020}). 

\begin{table}[b]
\begin{tabular}{ |p{2.4cm}|p{4.4cm}|p{4.7cm}|  }
	\hline
	\multicolumn{3}{|c|}{Fractional Sobolev-type spaces} \\
	\hline
	Notation& Definition & (Hilbertian) norms\\
	\hline
	$H^s(e), e\in\mathcal{E}$ & See \cite[p.77]{mclean} & $\|\cdot\|_{H^s(e)}$ (see \cite[p.77]{mclean})\\
	$H_0^s(e), e\in\mathcal{E}$ & Closure of $C_c^\infty(e)$ in $H^s(e)$ & $\|\cdot\|_{H^s(e)}$\\ 
	$H^{-s}(e), e\in\mathcal{E}$ & Dual of $H_0^s(e)$ & $\|\cdot\|_{H^{-s}(e)}.$ Dual norm\\
		$H^s(\Gamma),0\!<\!s\!<\!1$ &
	$(L_2(\Gamma),H^1(\Gamma))_s$ & $\|\cdot\|_{H^s(\Gamma)}:= \|\cdot\|_{(L_2(\Gamma), H^1(\Gamma))_s}$\\		
	${H}^s(\Gamma),1\!<\!s\!<\!2$ &
	$(H^1(\Gamma),\widetilde{H}^2_C(\Gamma))_s$ & $\|\cdot\|_{{H}^s(\Gamma)}:= \|\cdot\|_{(H^1(\Gamma), \widetilde{H}_C^2(\Gamma))_s}$\\
	$\widetilde{H}^s(\Gamma),0\!<\!s\!<\!1$ &
								$(L_2(\Gamma),\widetilde{H}^1(\Gamma))_s$ & $\|\cdot\|_{\widetilde{H}^s(\Gamma)}:=\|\cdot\|_{(L_2(\Gamma), \widetilde{H}^1(\Gamma))_s}$\\
	$\widetilde{H}^s(\Gamma),1\!<\!s\!<\!2$ &
								$(\widetilde{H}^1(\Gamma),\widetilde{H}^2(\Gamma))_s$ & $\|\cdot\|_{\widetilde{H}^s(\Gamma)}:= \|\cdot\|_{(\widetilde{H}^1(\Gamma), \widetilde{H}^2(\Gamma))_s}$\\		
	$\dot{H}_L^s(\Gamma), s\geq0$ & $\left\{u\in L_2(\Gamma)\!:\!\|u\|_{\dot{H}^s(\Gamma)}\!\!<\!\infty\right\}\!$ & $\|\cdot\|_{\dot{H}^s(\Gamma)}^2\!\!:=\!\!\sum_{k\in \mathbb{N}_0}\lambda_{k}^{s}(\cdot,e_k)^{2}$\\
	$\dot{H}^{-s}_L(\Gamma), s >0$& Dual of $\dot{H}_L^s(\Gamma)$ & $\|\cdot\|_{\dot{H}_L^{-s}(\Gamma)}.$ Dual norm\\	
	\hline 
   \end{tabular}
   \caption{Fractional Sobolev spaces on $\Gamma$. Recall that  
   	$(e_k)_{k\in\mathbb{N}}$ are the eigenvectors of $L$ and that  $\widetilde{H}_C^r(\Gamma) = \widetilde{H}^r(\Gamma)\cap C(\Gamma)$, for $r\in\mathbb{N}$.
   }
\label{tab:fractionalspaces}
\end{table}

\begin{Proposition}\label{prp:existuniqdeter}
	Suppose that the operator in \eqref{Amaineqn_deterministic} is equipped with 
	the vertex conditions \eqref{eq:boundaryconditionsselfadjoint} 
	and that the coefficients satisfy \eqref{eq:condcoeff1} and 
	\eqref{eq:condcoeff2}. 
	If either Assumption \ref{assum1} or Assumption \ref{assum1_alt} holds, then, for any $f\in L_2(\Gamma)$ and any $\beta>0$, 
	a unique solution $u \in \dot{H}_L^{2\beta}(\Gamma)$ 
	to \eqref{Amaineqn_deterministic} exists.
\end{Proposition}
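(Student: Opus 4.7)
The plan is to invoke Theorem~\ref{compactLinverse} to get a spectral decomposition and then construct the solution explicitly via the spectral formula $u = L^{-\beta}f$.

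First, by Theorem~\ref{compactLinverse}, under either Assumption~\ref{assum1} or Assumption~\ref{assum1_alt}, the operator $L$ is self-adjoint, positive definite, and has a compact inverse. Consequently, the spectral theorem yields a complete orthonormal basis $\{e_k\}_{k\in\mathbb{N}}$ of $L_2(\Gamma)$ consisting of eigenvectors of $L$, with corresponding eigenvalues $0 < \lambda_1 \leq \lambda_2 \leq \cdots$ such that $\lambda_k \to \infty$. The strict positivity of $\lambda_1$ is essential here, since it ensures that $\lambda_k^{-\beta}$ is well-defined and bounded by $\lambda_1^{-\beta}$ for all $k$.

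Next, given $f\in L_2(\Gamma)$, I would define the candidate solution by the spectral series
\begin{equation*}
u := \sum_{k\in\mathbb{N}} \lambda_k^{-\beta}\,(f,e_k)\,e_k.
\end{equation*}
Using Parseval's identity in $L_2(\Gamma)$ and the definition of the norm on $\dot{H}_L^{2\beta}(\Gamma)$ from Table~\ref{tab:fractionalspaces}, I would compute
\begin{equation*}
\|u\|_{\dot{H}_L^{2\beta}(\Gamma)}^2 = \sum_{k\in\mathbb{N}} \lambda_k^{2\beta} (u,e_k)^2 = \sum_{k\in\mathbb{N}} \lambda_k^{2\beta}\lambda_k^{-2\beta} (f,e_k)^2 = \|f\|_{L_2(\Gamma)}^2 < \infty,
\end{equation*}
which shows $u \in \dot{H}_L^{2\beta}(\Gamma) = D(L^\beta)$. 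Then, applying the spectral definition of $L^\beta$ to $u$ gives
\begin{equation*}
L^\beta u = \sum_{k\in\mathbb{N}} \lambda_k^\beta (u,e_k) e_k = \sum_{k\in\mathbb{N}} (f,e_k) e_k = f,
\end{equation*}
which establishes existence.

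For uniqueness, suppose $u_1, u_2 \in \dot{H}_L^{2\beta}(\Gamma)$ both solve \eqref{Amaineqn_deterministic}. Then $w := u_1 - u_2 \in \dot{H}_L^{2\beta}(\Gamma)$ satisfies $L^\beta w = 0$. Expanding $w = \sum_k (w,e_k) e_k$ and applying $L^\beta$ yields $\sum_k \lambda_k^\beta (w,e_k) e_k = 0$, and since $\{e_k\}$ is an orthonormal basis and each $\lambda_k^\beta > 0$, we conclude $(w,e_k) = 0$ for all $k$, hence $w = 0$. No step here is delicate: the main substantive input, namely the positive definiteness and discrete spectrum of $L$, is already packaged in Theorem~\ref{compactLinverse}, so the proof is essentially a direct spectral construction.
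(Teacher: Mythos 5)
Your proof is correct and follows essentially the same route as the paper: both rest on Theorem~\ref{compactLinverse} to obtain the positive discrete spectrum and then use the spectral calculus of $L^\beta$. The only difference is that the paper outsources the explicit spectral computation to the cited isometric-isomorphism result \cite[Lemma 2.1]{BKK2020} (namely that $L^\beta:\dot{H}_L^{2\beta}(\Gamma)\to L_2(\Gamma)$ is an isometric isomorphism), whereas you write out that computation directly, which is a perfectly valid self-contained substitute.
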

\begin{proof}
	We combine Theorem~\ref{compactLinverse} with  
	{\cite[Lemma 2.1]{BKK2020}} to demonstrate the 
	unique continuous extension of $L^\beta$ to an isometric isomorphism $L^\beta:\dot{H}_L^s(\Gamma)\rightarrow \dot{H}_L^{s-2\beta}(\Gamma),$ 
	for any $s\in \mathbb{R}$. The result follows by taking $s=2\beta$.
\end{proof}

To show existence and uniqueness of the solution to \eqref{Amaineqn}, observe that the Gaussian white noise $\mathcal{W}$ on $L_2(\Gamma)$ can be represented by a series
	$\mathcal{W}=\sum_{k\in \mathbb{N}} \xi_ke_k,$
where $\{\xi\}_k$ is a sequence of independent, real-valued, standard normally distributed random variables on $(\Omega, \mathcal{F},\mathbb{P})$.

\begin{Proposition}\label{maintheorem}
	Suppose that the operator in \eqref{Amaineqn} is equipped with the vertex conditions \eqref{eq:boundaryconditionsselfadjoint} and that the coefficients satisfy \eqref{eq:condcoeff1} and \eqref{eq:condcoeff2}. If either Assumption \ref{assum1} or Assumption \ref{assum1_alt} holds, then, for any $\beta>0$ and any $\epsilon>0$, a unique solution $u\in L_2(\Omega, \dot{H}_L^{2\beta-\frac{1}{2}-\epsilon}(\Gamma))$ exists to \eqref{Amaineqn}.
	Further, if $\beta>\frac{1}{4}$, $u\in L_2(\Omega, L_2(\Gamma))$.
\end{Proposition}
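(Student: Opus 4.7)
The plan is to write the candidate solution $u$ as a spectral series and verify its convergence in the claimed Bochner spaces by means of the Weyl asymptotic for $(\lambda_k)$. By Theorem~\ref{compactLinverse}, the operator $L$ admits a complete orthonormal eigensystem $(e_k,\lambda_k)_{k\in\mathbb{N}}$ in $L_2(\Gamma)$ with $\lambda_k>0$ and $\lambda_k\to\infty$. Using the expansion $\mathcal{W}=\sum_k \xi_k e_k$ with i.i.d.\ standard Gaussians $\xi_k$, I define the formal solution
\[
u := L^{-\beta}\mathcal{W} = \sum_{k\in\mathbb{N}} \lambda_k^{-\beta}\,\xi_k\, e_k,
\]
and, by the definition of $\dot{H}_L^s(\Gamma)$ together with $\mathbb{E}[\xi_k^2]=1$, Parseval yields
\[
\mathbb{E}\|u\|_{\dot{H}_L^s(\Gamma)}^2 \;=\; \sum_{k\in\mathbb{N}} \lambda_k^{s-2\beta}.
\]

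The key step is to invoke Weyl's law. Under the hypotheses of Proposition~\ref{maintheorem}, one has $\lambda_k \gtrsim k^2$, either by Corollary~\ref{cor:genkirchhoffweyl} in the generalized Kirchhoff case or by Theorem~\ref{cor:weyllaw} in the general vertex-condition case (provided the requisite edge-length hypothesis). Hence $\sum_k \lambda_k^{s-2\beta}$ is comparable to $\sum_k k^{2(s-2\beta)}$, which is finite if and only if $s < 2\beta - \tfrac12$. Choosing $s = 2\beta - \tfrac12 - \epsilon$ with $\epsilon>0$ yields $u \in L_2(\Omega, \dot{H}_L^{2\beta-1/2-\epsilon}(\Gamma))$. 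When additionally $\beta > 1/4$, the choice $s=0$ satisfies $s < 2\beta - \tfrac12$ as well, so $u\in L_2(\Omega, L_2(\Gamma))$.

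That $u$ solves \eqref{Amaineqn} then follows by applying $L^\beta$ termwise, using its extension to an isometric isomorphism $\dot{H}_L^s(\Gamma)\to\dot{H}_L^{s-2\beta}(\Gamma)$ on the whole real scale (as invoked in Proposition~\ref{prp:existuniqdeter}); the same isomorphism immediately delivers uniqueness, since if $v$ were another solution in the same Bochner space then $L^\beta(u-v)=0$ forces $u=v$ almost surely. The main obstacle is securing the polynomial lower bound $\lambda_k\gtrsim k^2$: without it the defining series need not converge in any of the claimed spaces, and the proof hinges on verifying that the hypotheses of Proposition~\ref{maintheorem} are strong enough to trigger Corollary~\ref{cor:genkirchhoffweyl} or Theorem~\ref{cor:weyllaw}. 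Once Weyl's law is in hand, the remaining argument is a routine orthogonal-series computation.
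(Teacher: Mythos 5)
Your proposal is correct and follows essentially the same route as the paper: the paper establishes $\mathcal{W}\in L_2(\Omega,\dot{H}_L^{-\nicefrac12-\epsilon}(\Gamma))$ via Weyl's law (citing \cite{BKK2020}) and then lifts by the isometric isomorphism $L^{-\beta}:\dot{H}_L^{s}(\Gamma)\to\dot{H}_L^{s+2\beta}(\Gamma)$, which is exactly your Parseval computation $\mathbb{E}\|u\|_{\dot{H}_L^s(\Gamma)}^2=\sum_k\lambda_k^{s-2\beta}\lesssim\sum_k k^{2(s-2\beta)}$ unpacked. Your closing caveat about needing the hypotheses to trigger Corollary~\ref{cor:genkirchhoffweyl} or Theorem~\ref{cor:weyllaw} (with its edge-length condition) is well taken, but the paper's own proof carries the identical dependence, so this is not a gap relative to the paper's argument.
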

\begin{proof}
	By {\cite[Proposition 2.3] {BKK2020}} and
	Theorem~\ref{cor:weyllaw}, 
	${\mathcal{W}\in L_2(\Omega, \dot{H}_L^{-\frac{1}{2}-\epsilon}(\Gamma))}$ 
	holds for all $\epsilon>0$.
	Furthermore, also by {\cite[Proposition 2.3]{BKK2020}},
	a constant $C_\lambda>0$, depending only on $\beta$ and on the constant $C_1$ in Theorem~\ref{cor:weyllaw}, exists
	such that
	\begin{align}\label{noiseregularity}
		\mathbb{E}\left[\|\mathcal{W}\|^{2}_{-\frac{1}{2}-\epsilon}\right]\leq C_{\lambda}^{\frac{1}{2}-\epsilon}\left(1+\frac{1}{2 \epsilon}\right).
	\end{align}	

	By combining Theorem~\ref{compactLinverse} with  
	{\cite[Lemma 2.1]{BKK2020}} and the noise regularity \eqref{noiseregularity}, the result follows.
\end{proof}

An alternative representation of Gaussian white noise is as a family of centered Gaussian variables $\{\mathcal{W}(h): h\in L_2(\Gamma)\}$, which satisfy
\begin{equation}\label{isometryW}
	\mathbb{E}[\mathcal{W}(h)\mathcal{W}(g)] = (h,g) \qquad \forall h,g\in L_2(\Gamma).
\end{equation}
With this representation, one can observe that $u$, the unique solution of \eqref{Amaineqn}, for $\beta>\frac{1}{4}$, is a Gaussian random field satisfying
\begin{equation}\label{eq:solspde}
	(u,\psi) = \mathcal{W}(L^{-\beta}\psi) \quad \mathbb{P}\text{-a.s.} \quad \forall \psi\in L_2(\Gamma),
\end{equation}
with covariance operator $\mathcal{C} = L^{-2\beta}$ satisfying
$$
(\mathcal{C}\phi,\psi) = \mathbb{E}[(u,\phi)(u,\psi)] \quad \forall \phi,\psi \in L_2(\Gamma).
$$ 
Further, $u$ is centered, and we let $\varrho$ denote the covariance function corresponding to $\mathcal{C}$, defined as 
\begin{equation}\label{eq:covfunc}
	\varrho(s,s') = \mathbb{E}(u(s)u(s')) \quad \text{a.e. in $\Gamma\times \Gamma$}.
\end{equation}
Observe that the expression in \eqref{eq:covfunc} exists for $\beta>1/4$. Indeed, we have that in this case, by Proposition~\ref{maintheorem}, $u\in L_2(\Omega,L_2(\Gamma))$, so by Fubini's theorem and Cauchy-Schwarz inequality $\lvert \mathbb{E}(u(s)u(s'))\rvert <\infty$ a.e.~in $\Gamma\times \Gamma$. 
We discuss the covariance function in more detail in the next section.

\section{Regularity in the Kirchhoff case}\label{sec:regularity}

We let  $L_\alpha$ denote the operator \eqref{eq:operatorL} endowed with the generalized Kirchhoff vertex conditions, with coefficients satisfying \eqref{eq:condcoeff1} and \eqref{eq:condcoeff2}. 
Further let ${\lambda_1\leq \lambda_2\leq \lambda_3\leq \cdots}$ be its corresponding 
eigenvalues written in increasing order, with corresponding eigenvectors
$\{e_k\}_{k\in\mathbb{N}_0}$. 
In this section, we state the regularity of the solutions to \eqref{Amaineqn_deterministic} and \eqref{Amaineqn} in the case of Kirchhoff vertex conditions ($\alpha=0$). To do so, we need the following characterization of the spaces $\dot{H}^\beta_{L_0}(\Gamma)$ for $\beta\in [0,2]$. 

\begin{Theorem}\label{thm:characterization}
	Let $0<\beta<2$, with $\beta\neq \nicefrac32$. 
	Then, $\dot{H}^{\nicefrac{1}{2}}_{L_0}(\Gamma)\cong H^{\nicefrac{1}{2}}(\Gamma)$ and 
	$$\dot{H}_{L_0}^\beta(\Gamma) \cong \begin{cases}
		H^\beta(\Gamma) \cong \widetilde{H}^\beta(\Gamma),& \hbox{if } 0<\beta<\nicefrac12,\\
		H^\beta(\Gamma) \cong \widetilde{H}^\beta(\Gamma) \cap C(\Gamma), & \hbox{if } \nicefrac12<\beta<\nicefrac32,\\
		H^\beta(\Gamma) \cap K^\beta(\Gamma) \cong \widetilde{H}^\beta(\Gamma) \cap C(\Gamma) \cap K^\beta(\Gamma),& \hbox{if } \nicefrac32<\beta\leq 2,
	\end{cases}$$
	where
	$K^\beta(\Gamma) = \Bigl\{u\in\widetilde{H}^\beta(\Gamma): \forall v\in\mathcal{V}, \sum_{e\in \mathcal{E}_v} \partial_e u(v) = 0 \Bigr\}.$ 
	Furthermore, if $\beta>\nicefrac12$ and we define $\widetilde{\beta} = \beta - \nicefrac12$ if $\beta\leq 1$ and $\widetilde{\beta}=\nicefrac12$ if $\beta>1$, then 
	\begin{equation}\label{eq:sobembed}
		\dot{H}^\beta_{L_0}(\Gamma) \hookrightarrow C^{0,\widetilde{\beta}}(\Gamma).
	\end{equation}
\end{Theorem}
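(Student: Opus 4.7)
My approach is to identify the form and full operator domains of $L_0$ with classical Sobolev spaces on $\Gamma$, and then propagate these identifications to all intermediate exponents via real interpolation, tracking when the vertex continuity and Kirchhoff-derivative compatibility conditions are preserved. \emph{Step 1 (endpoint identifications).} Using the new integration-by-parts formula for metric graphs, the quadratic form of $L_0$ for $u\in D(L_0)$ simplifies to
\begin{equation*}
(L_0u,u) = \int_\Gamma \kappa^2 u^2\md x + \int_\Gamma \H\lvert\nabla u\rvert^2\md x,
\end{equation*}
because the vertex contribution $\sum_{v\in\mathcal{V}}\H(v)u(v)\sum_{e\in\mathcal{E}_v}\partial_eu(v)$ vanishes by the Kirchhoff condition (when $\alpha=0$ one has $\Lambda_v\equiv 0$ and $u$ is continuous at $v$). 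Combined with \eqref{eq:condcoeff1}--\eqref{eq:condcoeff2} and boundedness of $\kappa,\H$, the form is coercive and continuous on $H^1(\Gamma)$, yielding $\dot{H}^1_{L_0}(\Gamma)\cong H^1(\Gamma)$. By definition of $D(L_0)$ with $\alpha=0$ one also has $\dot{H}^2_{L_0}(\Gamma)\cong\widetilde{H}^2_C(\Gamma)\cap K^2(\Gamma)$, settling the endpoint $\beta=2$.

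\emph{Step 2 (interpolation).} Real interpolation of Hilbert scales applied to the self-adjoint positive operator $L_0$ gives
\begin{equation*}
\dot{H}^\beta_{L_0}(\Gamma)\cong\begin{cases}(L_2(\Gamma),H^1(\Gamma))_\beta, & 0<\beta\leq 1,\\ (H^1(\Gamma),\widetilde{H}^2_C(\Gamma)\cap K^2(\Gamma))_{\beta-1}, & 1\leq\beta<2.\end{cases}
\end{equation*}
The first case is $H^\beta(\Gamma)$ by the definition in Table~\ref{tab:fractionalspaces}, which immediately yields $\dot{H}^{1/2}_{L_0}(\Gamma)\cong H^{1/2}(\Gamma)$ and covers $0<\beta<1$. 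For $1<\beta<2$ the crucial point is that the linear Kirchhoff constraint in $K^2(\Gamma)$ is visible under interpolation only above the derivative-trace threshold $\beta-1>\nicefrac12$: for $\nicefrac12<\beta-1<1$ the trace $u\mapsto\sum_{e\in\mathcal{E}_v}\partial_e u(v)$ is a bounded linear functional on the interpolation space and the Kirchhoff condition persists, giving the intersection with $K^\beta(\Gamma)$; for $\beta-1<\nicefrac12$ this trace is unbounded, so $\widetilde{H}^2_C\cap K^2$ is sufficiently dense in $\widetilde{H}^2_C$ (in the interpolation sense) that $(H^1,\widetilde{H}^2_C\cap K^2)_{\beta-1}=(H^1,\widetilde{H}^2_C)_{\beta-1}=H^\beta(\Gamma)$. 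This can be made rigorous by a retract/coretract argument based on a Lions--Magenes lifting of derivative traces at vertices.

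\emph{Step 3 (relating $H^\beta$ and $\widetilde{H}^\beta$).} I would interpolate between pairs built on $\widetilde{H}^1(\Gamma)$ and $H^1(\Gamma)=\widetilde{H}^1_C(\Gamma)$, and between $\widetilde{H}^2(\Gamma)$ and $\widetilde{H}^2_C(\Gamma)$. For $0<\beta<\nicefrac12$, the classical equality $H^\beta_0(e)=H^\beta(e)$ on an interval renders vertex continuity unenforceable after interpolation, so $H^\beta(\Gamma)\cong\widetilde{H}^\beta(\Gamma)$. For $\nicefrac12<\beta<1$ edgewise traces exist and continuity at each vertex propagates through the interpolation, producing $H^\beta(\Gamma)\cong\widetilde{H}^\beta(\Gamma)\cap C(\Gamma)$. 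The same dichotomy applied on the level $1<\beta<2$ shows that continuity is always preserved in this range (both endpoints lie in $C(\Gamma)$), while the Kirchhoff-derivative condition only enters above $\beta=\nicefrac32$, matching the intersections stated in the theorem.

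\emph{Step 4 (H\"older embedding \eqref{eq:sobembed}).} The one-dimensional Sobolev embeddings $H^\beta(e)\hookrightarrow C^{0,\beta-\nicefrac12}(e)$ for $\nicefrac12<\beta<\nicefrac32$ and $H^\beta(e)\hookrightarrow C^{0,\nicefrac12}(e)$ for $\beta\geq\nicefrac32$, applied edgewise to functions in $\widetilde{H}^\beta(\Gamma)$, combined with the global continuity from Step~3 and the compactness of $\Gamma$ in the shortest-path metric (finiteness of $\mathcal{E}$), deliver \eqref{eq:sobembed}. The main obstacle I expect is the delicate interpolation step of Step~2 near the threshold $\beta=\nicefrac32$, that is, rigorously proving that the derivative-Kirchhoff constraint disappears for $\beta-1<\nicefrac12$ while surviving for $\beta-1>\nicefrac12$; the cleanest path is to localize at each vertex by a partition of unity and construct an explicit edgewise lifting of the derivative trace, reducing the problem to interpolating a finite-dimensional linear constraint whose behavior under the interpolation functor is transparent.
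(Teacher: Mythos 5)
Your outline correctly identifies the endpoint spaces, the thresholds $\beta=\nicefrac12$ and $\beta=\nicefrac32$, and the role of vertex continuity versus the Kirchhoff derivative condition, and Steps~1 and~4 are essentially sound (Step~1 matches Propositions~\ref{prp:Hdot1} and \ref{prp:Hdot2}; Step~4 matches Corollary~\ref{cor:sobembedHdot}). However, the heart of your argument --- Steps~2 and~3 --- rests on an unproved claim about interpolation of constrained subspaces, and this is precisely where the difficulty of the theorem lies. You assert that $(H^1(\Gamma),\widetilde{H}^2_C(\Gamma)\cap K^2(\Gamma))_{\beta-1}$ equals the unconstrained interpolation space when the derivative-trace functional is unbounded ($\beta-1<\nicefrac12$) and equals the constrained space when it is bounded ($\beta-1>\nicefrac12$), and similarly for the continuity constraint at the level $\beta=\nicefrac12$ in Step~3. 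Neither implication is automatic: interpolation between a space and a closed subspace of finite codimension defined by vanishing of trace functionals does not in general commute with taking the kernel, and unboundedness of the functional on the interpolation space does not by itself imply that the constrained couple interpolates to the full space (one needs quantitative information on how well the constraint can be approximately lifted, i.e.\ control of the $K$-functional of the constrained couple). Your proposed remedy --- a retract/coretract argument via a Lions--Magenes lifting localized at vertices --- is only named, not constructed, so the decisive step of the proof is missing.

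The paper avoids this issue entirely by a different mechanism: it proves the hard inclusions $H^\beta(\Gamma)\hookrightarrow\dot{H}^\beta_{L_0}(\Gamma)$ (and the $\widetilde{H}^\beta\cap C(\Gamma)$ and $K^\beta$ variants) by a duality argument in the spirit of Yagi. One shows $\lvert(u,L_0^{\nicefrac{\beta}{2}}g)\rvert\lesssim\|u\|_{H^\beta(\Gamma)}\|g\|_{L_2(\Gamma)}$ for all $g$ in the dense set $\dot{H}^2_{L_0}(\Gamma)$, which by self-adjointness forces $u\in D(L_0^{\nicefrac{\beta}{2}})$ with the corresponding norm bound. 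The pairings that arise are controlled by the weak integration-by-parts formulas of Lemmas~\ref{lem:weakintbyparts} and \ref{lem:weakintbypartsLaplacian} (where the vertex terms vanish exactly because $L_0^{\nicefrac{\beta}{2}-1}g$ satisfies the Kirchhoff conditions and $u$ is continuous, respectively Kirchhoff), together with the boundedness of $L_0^{\nicefrac{\beta}{2}-1}:L_2(\Gamma)\to H^{2-\beta}(\Gamma)$ from Corollary~\ref{cor:Lboundedfrac} and the duality $H^s_0(e)=H^s(e)$ for $0<s<\nicefrac12$. If you want to complete your proof along your own lines, you would need to actually establish the subspace-interpolation identities, which is a substantial piece of work in itself; otherwise the duality route is the cleaner path.
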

\begin{Remark}
	The equalities inside the brackets in the statement of Theorem~\ref{thm:characterization}, which is one of our main results,  are also part of the statement. 
		In \cite{BSW2022}, $\dot{H}_{\kappa^2 - \Delta}^\beta(\Gamma)$, for $0 \leq \beta \leq 1$ or $\beta\in\mathbb{N}$ and $\kappa>0$ constant, is characterized in terms of $H^\beta(\Gamma)$, but not in terms of $\widetilde{H}^\beta(\Gamma)$ except if $\beta\in\mathbb{N}$, which is the most difficult part to prove for fractional $\beta$. Thus, Theorem~\ref{thm:characterization} is a substantial extension of the corresponding results in  \cite{BSW2022}, also to more general operators with non-constant coefficients. 
 \end{Remark}
The proof is postponed until the next section to simplify the exposition. Before that, we use it to derive some regularity results for the solutions to \eqref{Amaineqn_deterministic} and \eqref{Amaineqn}.

\begin{Theorem}\label{thm:regularity0}
	Let $u$ be the solution of 
	\eqref{Amaineqn_deterministic}, where $L$ is 
	equipped with Kirchhoff vertex conditions and the 
	coefficients satisfy \eqref{eq:condcoeff2} and \eqref{eq:condcoeff1}.
	Then ${u\in H^1(\Gamma)}$ if and only if $\beta>\nicefrac12$.
	If $\beta\in (\nicefrac14, \nicefrac12]$, 
	then $u$ is $\gamma$-H\"older continuous,
	with $\gamma=2\beta - \nicefrac12$.
\end{Theorem}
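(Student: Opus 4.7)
The plan is to reduce Theorem~\ref{thm:regularity0} to two inputs that are already available: Proposition~\ref{prp:existuniqdeter}, which produces the unique solution $u\in\dot{H}_{L_0}^{2\beta}(\Gamma)$ for every $f\in L_2(\Gamma)$, and Theorem~\ref{thm:characterization}, which concretely identifies $\dot{H}_{L_0}^{s}(\Gamma)$ with standard Sobolev (intersection) spaces. Standard Kirchhoff conditions correspond to $\alpha=0$, so $S=0$ and Assumption~\ref{assum1} holds trivially without any length restriction; in particular, Corollary~\ref{cor:genkirchhoffweyl} applies and gives the two-sided Weyl asymptotics $\lambda_k\asymp k^2$.

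For the ``if'' direction of the $H^1$ statement I would set $s=2\beta>1$ and invoke Theorem~\ref{thm:characterization}: it identifies $\dot{H}_{L_0}^{2\beta}(\Gamma)$ with $\widetilde H^{2\beta}(\Gamma)\cap C(\Gamma)$, further intersected with $K^{2\beta}(\Gamma)$ when $2\beta>3/2$. This space is continuously embedded in $\widetilde H^{1}(\Gamma)\cap C(\Gamma)=H^{1}(\Gamma)$ by the monotonicity of the decoupled Sobolev scale applied edgewise, giving $u\in H^{1}(\Gamma)$. For the converse, I would produce, for any $\beta<1/2$, an explicit $f\in L_2(\Gamma)$ whose solution fails to lie in $H^{1}(\Gamma)$. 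Using Weyl's law, set $f=\sum_{k\geq 2}c_k e_k$ with $c_k^2=\lambda_k^{-(1-2\beta)}/(k\log k)$. Then $\sum_k c_k^2\lesssim \sum_k k^{-(3-4\beta)}(\log k)^{-1}<\infty$, so $f\in L_2(\Gamma)$, while via the norm equivalence $H^{1}(\Gamma)\cong\dot{H}_{L_0}^{1}(\Gamma)$ from Theorem~\ref{thm:characterization} the computation
\[
\|u\|_{H^{1}(\Gamma)}^2\asymp\sum_{k}\lambda_k(u,e_k)^2=\sum_{k}\lambda_k^{1-2\beta}c_k^2=\sum_{k\geq 2}\frac{1}{k\log k}=\infty
\]
shows $u\notin H^{1}(\Gamma)$.

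For the H\"older continuity, $\beta\in(1/4,1/2]$ yields $2\beta\in(1/2,1]$, and I would directly apply the embedding \eqref{eq:sobembed} of Theorem~\ref{thm:characterization} with parameter $2\beta$. Since $2\beta\leq 1$, the theorem gives $\widetilde{2\beta}=2\beta-1/2$ and hence $u\in\dot{H}_{L_0}^{2\beta}(\Gamma)\hookrightarrow C^{0,2\beta-1/2}(\Gamma)$, which is exactly the claim.

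The main obstacle is in fact already dispatched by Theorem~\ref{thm:characterization}: once the clean identification of fractional domains with concrete Sobolev spaces is in hand, each piece of the present theorem follows either by a standard embedding (the ``if'' direction and the H\"older bound) or by a spectral counterexample (the ``only if'' direction). The only delicate step remaining is tuning the coefficients $c_k$ so that $\sum c_k^2$ barely converges while $\sum\lambda_k^{1-2\beta}c_k^2$ barely diverges; this requires the sharp two-sided Weyl asymptotics from Corollary~\ref{cor:genkirchhoffweyl} rather than a one-sided bound.
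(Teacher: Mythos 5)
Your proposal follows the paper's own route: the published proof is exactly the two-step reduction you describe (Proposition~\ref{prp:existuniqdeter} places $u$ in $\dot H^{2\beta}_{L_0}(\Gamma)$, and Theorem~\ref{thm:characterization} together with \eqref{eq:sobembed} then yields all claims), so your embedding argument for the ``if'' direction and your treatment of the H\"older statement coincide with what the paper does. What you add is an explicit spectral counterexample for the ``only if'' direction, which the paper leaves implicit; that construction is sound for $\beta<\nicefrac12$ and correctly relies on the two-sided Weyl asymptotics of Corollary~\ref{cor:genkirchhoffweyl} and the norm equivalence $H^1(\Gamma)\cong\dot H^1_{L_0}(\Gamma)$.

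There is, however, one genuine gap: your counterexample does not cover the endpoint $\beta=\nicefrac12$. There $c_k^2=1/(k\log k)$, so $\sum_k c_k^2$ diverges and your $f$ is not in $L_2(\Gamma)$; moreover no tuning of the coefficients can repair this, because for $\beta=\nicefrac12$ the solution map sends $L_2(\Gamma)$ onto $\dot H^{1}_{L_0}(\Gamma)$, which by Proposition~\ref{prp:Hdot1} coincides with $H^1(\Gamma)$, so \emph{every} solution lies in $H^1(\Gamma)$ at $\beta=\nicefrac12$. In other words the obstruction sits in the statement itself (the equivalence should read $\beta\geq\nicefrac12$), and your proof, like the paper's one-line argument, silently skips this endpoint; you should flag it explicitly rather than leave $\beta=\nicefrac12$ unaddressed. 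A second, minor point: Theorem~\ref{thm:characterization} is stated only for $0<2\beta<2$ with $2\beta\neq\nicefrac32$, so for $\beta\geq 1$ or $\beta=\nicefrac34$ you cannot invoke it with parameter $2\beta$ directly; you should first use the monotonicity of the $\dot H^{s}_{L_0}$ scale, e.g.\ $\dot H^{2\beta}_{L_0}(\Gamma)\subset\dot H^{s}_{L_0}(\Gamma)$ for some $1<s<\min\{2\beta,\nicefrac32\}$, and then apply the characterization at level $s$; the paper glosses over this as well.
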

\begin{proof}
	By Proposition~\ref{prp:existuniqdeter}, $u\in \dot{H}_{L_0}^{2\beta}(\Gamma)$.
	Therefore, all the statements follow directly from Theorem~\ref{thm:characterization}. In particular,
	the H\"older-continuity follows from \eqref{eq:sobembed}.
\end{proof}

The following result can be considered an extension of \cite[Theorem 1]{BSW2022} to the case of non-constant coefficients $\kappa,\H$ of the operator.

\begin{Theorem}\label{thm:regularity1}
	Let $u$ be the solution of \eqref{Amaineqn}, where the operator is equipped with Kirchhoff vertex conditions and the coefficients satisfy \eqref{eq:condcoeff1} and \eqref{eq:condcoeff2}.
	Then ${u\in H^1(\Gamma)}$ $\mathbb{P}$-a.s. if and only if $\beta > \nicefrac{3}{4}$. Further, for any $\varepsilon>0$ and any $1/4<\beta\leq 2+1/4$, the sample paths of $u$ belong to $\widetilde{H}^{2\beta - \nicefrac{1}{2}-\varepsilon}(\Gamma)$ $\mathbb{P}$-a.s.. Furthermore, for $\beta > 1$, $u$ satisfies the Kirchhoff's vertex conditions: $\sum_{e\in\mathcal{E}_v} \partial_e u(v) = 0$. 
\end{Theorem}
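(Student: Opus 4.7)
The plan is to combine the spectral representation of $u$ with the Weyl asymptotics of Corollary~\ref{cor:genkirchhoffweyl} and the sharp characterization of $\dot H_{L_0}^s(\Gamma)$ from Theorem~\ref{thm:characterization}. Since $L_0$ has a compact self-adjoint positive inverse with orthonormal eigenbasis $(e_k)_{k\in\mathbb N}$ and eigenvalues $0<\lambda_1\leq \lambda_2\leq\cdots$, the unique solution of $L_0^\beta u = \mathcal{W}$ given by Proposition~\ref{maintheorem} admits the Karhunen--Lo\`eve expansion $u = \sum_k \lambda_k^{-\beta}\xi_k e_k$ with $(\xi_k)$ i.i.d.\ $N(0,1)$, so that
\begin{equation*}
\|u\|_{\dot H_{L_0}^s(\Gamma)}^2 = \sum_{k\in\mathbb N} \lambda_k^{s-2\beta}\xi_k^2,\qquad s\in\mathbb R.
\end{equation*}
Monotone convergence gives $\mathbb E[\|u\|_{\dot H_{L_0}^s(\Gamma)}^2]=\sum_k \lambda_k^{s-2\beta}$, and Kolmogorov's zero--one law upgrades this to the statement that the random variable on the right-hand side of the display is a.s.\ finite if and only if its expectation is finite. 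By Corollary~\ref{cor:genkirchhoffweyl}, $\lambda_k\asymp k^2$, and so this occurs precisely when $s<2\beta-\tfrac12$.

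The $H^1$ dichotomy is then immediate: Theorem~\ref{thm:characterization} applied at $s=1\in(\tfrac12,\tfrac32)$ yields $\dot H_{L_0}^1(\Gamma)\cong H^1(\Gamma)$, so $u\in H^1(\Gamma)$ $\mathbb{P}$-a.s.\ iff $u\in\dot H_{L_0}^1(\Gamma)$ $\mathbb{P}$-a.s., which by the previous paragraph is equivalent to $\beta>\tfrac34$. For the Kirchhoff condition when $\beta>1$, I would pick any $s\in(\tfrac32,\min(2,2\beta-\tfrac12))$, which is nonempty because $\beta>1$ forces $2\beta-\tfrac12>\tfrac32$, and apply Theorem~\ref{thm:characterization} in the range $\tfrac32<s\leq 2$ to conclude $u\in \dot H_{L_0}^s(\Gamma)\subset K^s(\Gamma)$ a.s., which is exactly $\sum_{e\in\mathcal E_v}\partial_e u(v)=0$ for every $v\in\mathcal V$ almost surely.

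For the $\widetilde H^{2\beta-1/2-\varepsilon}(\Gamma)$ sample-path regularity, set $s=2\beta-\tfrac12-\varepsilon$ and split by cases. When $s\in(0,2)\setminus\{\tfrac12,\tfrac32\}$, which covers $\tfrac14<\beta\leq \tfrac54$, a negligible adjustment of $\varepsilon$ avoids the two exceptional exponents, and Theorem~\ref{thm:characterization} directly yields the continuous embedding $\dot H_{L_0}^s(\Gamma)\hookrightarrow\widetilde H^s(\Gamma)$, so the spectral criterion already produces $u\in\widetilde H^s(\Gamma)$ a.s. When $s\in[2,4)$, corresponding to $\beta$ closer to the upper endpoint $\tfrac94$, I would bootstrap along the isometric isomorphism $L_0:\dot H_{L_0}^r(\Gamma)\to\dot H_{L_0}^{r-2}(\Gamma)$ used in Proposition~\ref{prp:existuniqdeter}: if $u\in\dot H_{L_0}^s(\Gamma)$, then $L_0 u\in\dot H_{L_0}^{s-2}(\Gamma)\hookrightarrow \widetilde H^{s-2}(\Gamma)$ a.s.\ by the previous case, and inverting the edgewise elliptic identity $-(\H_e u_e')'+\kappa_e^2 u_e=(L_0 u)_e\in H^{s-2}(e)$ via one-dimensional elliptic regularity returns $u_e\in H^s(e)$ for every $e\in\mathcal E$, whence $u\in\widetilde H^s(\Gamma)$.

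The main technical obstacle is twofold. First, the almost-sure ``only if'' direction of the $H^1$ dichotomy cannot be extracted from the $L_2(\Omega)$-based bound of Proposition~\ref{maintheorem} alone; the zero--one law, or an equivalent Borel--Cantelli argument applied to the tails of $\sum_k \lambda_k^{1-2\beta}\xi_k^2$, is essential to pass from ``$\mathbb E[\|u\|_{\dot H_{L_0}^1}^2]=\infty$'' to ``$u\notin\dot H_{L_0}^1(\Gamma)$ a.s.''. Second, the bootstrap in the regime $s>2$ must be compatible with the mere Lipschitz regularity of $\H$ and $L_\infty$ regularity of $\kappa^2$, so the one-dimensional elliptic estimate has to be invoked carefully at each edge and, if needed, $\varepsilon$ enlarged slightly so that $s$ stays within the range permitted by the available coefficient regularity.
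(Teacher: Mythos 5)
Your argument is, at its core, the same as the paper's: both rest on the almost-sure spectral regularity $u\in\dot H^{2\beta-\frac12-\varepsilon}_{L_0}(\Gamma)$ (Proposition~\ref{maintheorem}) combined with the identifications and embeddings of Theorem~\ref{thm:characterization}, and your Karhunen--Lo\`eve computation $\|u\|^2_{\dot H^s_{L_0}(\Gamma)}=\sum_k\lambda_k^{s-2\beta}\xi_k^2$ together with Weyl's law (Corollary~\ref{cor:genkirchhoffweyl}) is exactly the content the paper outsources to \cite[Theorem~1]{BSW2022} for the ``only if'' direction of the $H^1$ dichotomy. One small caution there: Kolmogorov's zero--one law only tells you that the probability of convergence is $0$ or $1$; to conclude a.s.\ divergence when $\sum_k\lambda_k^{1-2\beta}=\infty$ you genuinely need the three-series theorem or the Borel--Cantelli variant you mention, but that is standard and is precisely what the cited argument does. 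Your treatment of the Kirchhoff condition for $\beta>1$, via $\dot H^{s}_{L_0}(\Gamma)\subset K^s(\Gamma)$ for some $s\in(\tfrac32,2]$ with $s<2\beta-\tfrac12$, also coincides with the paper's.

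The one genuine weak point is the second claim in the range $\beta\in(\tfrac54,\tfrac94]$, where $s=2\beta-\tfrac12-\varepsilon$ exceeds $2$. Your bootstrap ``$L_0u\in\widetilde H^{s-2}(\Gamma)$ implies $u\in\widetilde H^{s}(\Gamma)$'' via edgewise elliptic regularity does not go through under the standing assumptions that $\H$ is Lipschitz and $\kappa\in L_\infty(\Gamma)$: writing $-(\H_e u_e')'=(L_0u)_e-\kappa_e^2u_e$, the term $\kappa_e^2u_e$ lies only in $L_2(e)$, so the right-hand side cannot be placed above $L_2(e)$ no matter how smooth $(L_0u)_e$ is, and the gain is capped at $u_e\in H^2(e)$; reaching $H^s(e)$ with $s>2$ would require $\kappa^2$ to act as a multiplier on $H^{s-2}(e)$ and $\H$ to possess roughly $s-1$ derivatives. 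Enlarging $\varepsilon$, as you suggest, cannot rescue the statement, which is asserted for every $\varepsilon>0$. To be fair, the paper's own proof is no more complete here: it invokes the embedding $\dot H^{s}_{L_0}(\Gamma)\hookrightarrow\widetilde H^{s}(\Gamma)$ from Theorem~\ref{thm:characterization}, which is stated and proved only for exponents in $(0,2]$, and $\widetilde H^s(\Gamma)$ is not even defined in the paper for $s\in(2,4)$. So your proof is complete for the first and third assertions and for the second assertion when $\beta\le\tfrac54$; the residual gap for larger $\beta$ is one your argument shares with, rather than introduces relative to, the published one.
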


\begin{proof}
	By Proposition \ref{maintheorem}, for any $\varepsilon>0$, $u$ has sample paths in $\dot{H}^{2\beta -\frac{1}{2}-\varepsilon}_{L_0}(\Gamma)$.  Thus, if ${\beta>\nicefrac34}$, we have $\dot{H}_{L_0}^{2\beta-\frac{1}{2} -\varepsilon} (\Gamma) \subset H^1(\Gamma)$ for sufficiently small $\varepsilon>0$ The converse for $\beta \in (\nicefrac14, \nicefrac34]$ follows from Corollary~\ref{cor:genkirchhoffweyl} and the same arguments as in the proof of \cite[Theorem~1]{BSW2022}. 
	The second claim follows from $\dot{H}_{L_0}^{2\beta-\frac{1}{2} -\varepsilon} (\Gamma) \hookrightarrow \widetilde{H}^{2\beta - \nicefrac{1}{2}-\varepsilon}(\Gamma)$, which holds by Theorem~\ref{thm:characterization}. 
	Finally, by Theorem~\ref{thm:characterization}, $\dot{H}_{L_0}^{2\beta-\frac{1}{2} -\varepsilon} (\Gamma) \subset K^{2\beta-\frac{1}{2} -\varepsilon}(\Gamma)$
	for $\beta>1$ and for sufficiently small $\varepsilon>0$ such that $2\beta -\frac{1}{2}-\varepsilon > 3/2$. 
\end{proof}

\begin{Remark}
\cite[Theorem 1]{BSW2022} shows that the solution to $(\kappa^2 - \Delta)^{\beta} u =\mathcal{W}$ on $\Gamma$, where $\kappa$ is constant, satisfies Kirchhoff's vertex condition, $\sum_{e\in\mathcal{E}_v} \partial_e u(v) = 0$, for $\beta > 5/4$. Therefore, Theorem \ref{thm:regularity1} provides a considerable refinement to \cite[Theorem 1]{BSW2022} with respect to understanding for which values of $\beta$, the solution $u(\cdot)$ starts to satisfy the vertex conditions.
\end{Remark}

Next, we can obtain a more refined regularity result, which is a more general version of \cite[Lemmas 1 and 2]{BSW2022}.

\begin{Lemma}\label{lem:regularity2}
	Fix $\beta > \nicefrac14$ and let $\widetilde{\beta} = \beta-\nicefrac14$ if $\beta\leq \nicefrac12$ and $\widetilde{\beta}=\nicefrac14$ if $\beta>\nicefrac12$. For $x\in\Gamma$, let $u_0(x) = \mathcal{W}\left((L_0^{-\beta})^\ast(\delta_x)\right)$. Then,
	\begin{equation}\label{holdercontvar}
		E\left(\lvert u_0(x) - u_0(y)\rvert^2\right) \leq \|L_0^{-\beta}\|_{\mathcal{L}(L_2(\Gamma),C^{0,\widetilde{\beta}}(\Gamma))}^2 d(x,y)^{4\widetilde{\beta}}.    
	\end{equation}
	Furthermore, for any $0<\gamma<2\widetilde{\beta}$, $u_0$ has a $\gamma$-H\"older continuous modification.  Moreover, if $u$ is any $\gamma$-H\"older modification of $u_0$, then, for all $\phi\in L_2(\Gamma)$, we have $(u,\phi) = \mathcal{W}(L_0^{-\beta} \phi)$. In particular, $u$ is a solution to \eqref{Amaineqn}. That is, the solution $u$ of \eqref{Amaineqn} admits a modification with $\gamma$-H\"older continuous sample paths.
\end{Lemma}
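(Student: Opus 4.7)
The plan is to carry out four steps: (i) show that $u_0(x)$ is a well-defined centered Gaussian variable for every $x\in\Gamma$; (ii) bound the second moment of the increment $u_0(x)-u_0(y)$ via white-noise duality; (iii) apply Kolmogorov's continuity theorem to extract a H\"older modification; and (iv) identify any such modification with the weak solution of \eqref{Amaineqn}.

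For step (i), I would apply Theorem~\ref{thm:characterization} at level $2\beta$: the embedding $\dot{H}^{2\beta}_{L_0}(\Gamma)\hookrightarrow C^{0,2\widetilde{\beta}}(\Gamma)$ implies that $L_0^{-\beta}$ is bounded from $L_2(\Gamma)$ into the H\"older space $C^{0,2\widetilde{\beta}}(\Gamma)$. Its Banach adjoint $(L_0^{-\beta})^{\ast}$ is therefore a bounded map from $(C^{0,2\widetilde{\beta}}(\Gamma))^{\ast}$ into $L_2(\Gamma)$, and since $\delta_x$ lies in this dual H\"older space with norm bounded by one, the element $(L_0^{-\beta})^{\ast}\delta_x\in L_2(\Gamma)$ is well-defined. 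Consequently $u_0(x)=\mathcal{W}((L_0^{-\beta})^{\ast}\delta_x)$ is a centered Gaussian random variable of finite variance.

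Step (ii) then follows immediately from the white-noise isometry \eqref{isometryW} together with the elementary dual-norm bound $\|\delta_x-\delta_y\|_{(C^{0,2\widetilde{\beta}}(\Gamma))^{\ast}}\le d(x,y)^{2\widetilde{\beta}}$, obtained by testing against the unit ball of $C^{0,2\widetilde{\beta}}(\Gamma)$ and invoking the H\"older seminorm, which upon squaring yields \eqref{holdercontvar}. For step (iii), I would invoke Kolmogorov's criterion: Gaussian moment equivalence upgrades the second-moment bound to $E[|u_0(x)-u_0(y)|^{2p}]\lesssim_p d(x,y)^{4p\widetilde{\beta}}$ for every $p\in\mathbb{N}$, and Kolmogorov applied on each (one-dimensional) edge produces an edgewise modification that is $\gamma$-H\"older for every $\gamma<2\widetilde{\beta}-\nicefrac{1}{2p}$; letting $p\to\infty$ covers every $\gamma<2\widetilde{\beta}$. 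The edgewise modifications can be patched consistently on the whole of $\Gamma$ because at each vertex $v$ the single functional $\delta_v$ on $C^{0,2\widetilde{\beta}}(\Gamma)$ gives a single random variable $u_0(v)$ that agrees with every edgewise limit at $v$ almost surely.

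The main obstacle will be step (iv), identifying the modification $u$ with the weak solution of \eqref{Amaineqn}. Fix $\phi\in L_2(\Gamma)$; duality yields $(L_0^{-\beta}\phi)(y)=((L_0^{-\beta})^{\ast}\delta_y,\phi)_{L_2(\Gamma)}$, and a stochastic-Fubini argument should then produce
\[
\mathcal{W}(L_0^{-\beta}\phi)=\int_\Gamma \phi(y)\,\mathcal{W}((L_0^{-\beta})^{\ast}\delta_y)\,dy=\int_\Gamma \phi(y)u_0(y)\,dy=(u,\phi)
\]
$\mathbb{P}$-almost surely. To justify the exchange between $\mathcal{W}$ and the spatial integral, I would first verify the identity for simple functions $\phi$ (where it is immediate from the linearity of $\mathcal{W}$) and then pass to the $L_2(\Omega)$-limit, using the Bochner integrability of $y\mapsto \phi(y)u_0(y)$ (ensured by the variance bound of step (ii)) together with the $L_2(\Gamma)\to L_2(\Gamma)$ continuity of $L_0^{-\beta}$. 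The resulting identity is exactly the weak-solution characterization \eqref{eq:solspde}, which establishes that $u$ is indeed a $\gamma$-H\"older modification of the unique solution of \eqref{Amaineqn}.
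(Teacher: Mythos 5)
Your proposal is correct and follows essentially the same route as the paper: the one substantive new ingredient is the boundedness of $L_0^{-\beta}:L_2(\Gamma)\to C^{0,2\widetilde{\beta}}(\Gamma)$, which you (like the paper) obtain from the embedding $\dot{H}^{2\beta}_{L_0}(\Gamma)\hookrightarrow C^{0,2\widetilde{\beta}}(\Gamma)$, after which the moment bound via the white-noise isometry, the Kolmogorov--Chentsov modification, and the stochastic-Fubini identification with the weak solution are exactly the steps the paper delegates to the cited lemmas of the earlier reference. The only point to tighten is your claim that the identity $(u,\phi)=\mathcal{W}(L_0^{-\beta}\phi)$ is ``immediate from linearity'' for simple $\phi$ --- even there one must exchange $\mathcal{W}$ with an integral over a set of positive measure, so the $L_2(\Omega)$-limit/Bochner-integrability argument you describe is needed from the outset, not only in the passage to general $\phi$.
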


A direct consequence is the continuity of the covariance function.

\begin{Corollary}\label{cor:covfunccont}
	Assume that the operator is equipped with Kirchhoff vertex conditions. Then, for any $\beta > \nicefrac{1}{4}$, the covariance function $\varrho(\cdot,\cdot)$ in \eqref{eq:covfunc} is  continuous. Further, the covariance operator $\mathcal{C}$ is an integral operator with kernel $\varrho$, that is, 
	$$
	(\mathcal{C} \phi)(y) = \sum_{e\in\mathcal{E}} \int_e \varrho(x,y) \phi(x) dx.
	$$
	In addition, $\varrho$ admits the following series representation in terms of the eigenvalues and eigenvectors of $L$:
	$\varrho(x,y) = \sum_{j\in\mathbb{N}} \lambda_j^{-2\beta} e_j(x) e_j(y),$
	where the convergence of the series is absolute and uniform. Moreover, the series also converges in $L_2(\Gamma\times \Gamma)$.
\end{Corollary}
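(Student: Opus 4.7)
The plan is to first use the H\"older regularity from Lemma~\ref{lem:regularity2} to establish continuity of $\varrho$, then identify $\mathcal{C}$ as the integral operator with kernel $\varrho$ via a Fubini argument, and finally invoke Mercer's theorem (together with Weyl's law from Corollary~\ref{cor:genkirchhoffweyl}) to obtain the series expansion.

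For the continuity of $\varrho$, Lemma~\ref{lem:regularity2} provides a modification of $u$ whose sample paths are $\gamma$-H\"older continuous with $\mathbb{E}[\lvert u(x)-u(y)\rvert^2]\lesssim d(x,y)^{4\widetilde{\beta}}$, and the embedding \eqref{eq:sobembed} yields a uniform bound on $\varrho(z,z)=\mathbb{E}[u(z)^2]$. Decomposing $\varrho(x,y) - \varrho(x',y') = \mathbb{E}[(u(x)-u(x'))u(y)] + \mathbb{E}[u(x')(u(y)-u(y'))]$ and applying Cauchy--Schwarz then gives joint continuity on $\Gamma\times\Gamma$.

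For the integral-operator representation, Proposition~\ref{maintheorem} gives $u\in L_2(\Omega,L_2(\Gamma))$ for $\beta > \nicefrac14$, which justifies Fubini's theorem and yields, for any $\phi,\psi\in L_2(\Gamma)$,
$$
(\mathcal{C}\phi,\psi) = \mathbb{E}[(u,\phi)(u,\psi)] = \sum_{e,e'\in\mathcal{E}}\int_e\int_{e'} \varrho(x,y)\phi(x)\psi(y)\,dx\,dy,
$$
from which $(\mathcal{C}\phi)(y) = \sum_{e\in\mathcal{E}}\int_e \varrho(x,y)\phi(x)\,dx$ follows by density of $\psi$. For the series representation, since $\mathcal{C} = L_0^{-2\beta}$ shares the eigenfunctions $\{e_j\}$ of $L_0$ with eigenvalues $\lambda_j^{-2\beta}$, and Corollary~\ref{cor:genkirchhoffweyl} gives $\lambda_j \asymp j^2$, one has $\sum_j \lambda_j^{-2\beta} < \infty$ whenever $\beta > \nicefrac14$; hence $\mathcal{C}$ is trace class. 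Combined with the positivity, self-adjointness, and continuity of the kernel $\varrho$ on the compact metric space $\Gamma$, Mercer's theorem applies and yields absolute and uniform convergence of $\sum_j \lambda_j^{-2\beta} e_j(x)e_j(y)$ to $\varrho(x,y)$.

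Finally, $L_2(\Gamma\times\Gamma)$ convergence follows from the fact that $\{e_j\otimes e_k\}_{j,k\in\mathbb{N}}$ is orthonormal in $L_2(\Gamma\times\Gamma)$, so the squared $L_2$-norm of the tail equals $\sum_{j>N}\lambda_j^{-4\beta}$, which tends to zero by Weyl's law for any $\beta > \nicefrac14$. The main subtlety is verifying Mercer's hypotheses in this metric-graph setting rather than in a Euclidean domain; however, since $(\Gamma,d)$ is a compact metric space with a finite Borel measure and $\mathcal{C}$ has a continuous, symmetric, positive semidefinite, trace-class kernel, the classical proof of Mercer's theorem carries over without modification.
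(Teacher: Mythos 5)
Your proposal is correct and follows essentially the same route as the paper: the paper likewise derives continuity of $\varrho$ from the $L_2(\Omega)$-continuity estimate of Lemma~\ref{lem:regularity2}, and then delegates the integral-operator identification, the Mercer-type series expansion, and the $L_2(\Gamma\times\Gamma)$ convergence to the proofs of Lemma~3 and Proposition~7 of the cited earlier work, which are precisely the Fubini, Mercer, and Parseval arguments you spell out. Your verification that Mercer's theorem applies on a compact metric graph (continuous symmetric positive kernel, trace-class operator via Weyl's law for $\beta>\nicefrac14$) is the right justification for the step the paper leaves implicit.
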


The proofs of Lemma~\ref{lem:regularity2} and Corollary~\ref{cor:covfunccont} are postponed to Section~\ref{sec:proof}. 
Finally, we have a representation of the solution $u$ through a Karhunen--Lo\`eve expansion. The proof of the result is essentially identical to those of \cite[Propositions 7 and 8]{BSW2022} and we therefore omit it. 

\begin{Proposition}\label{prp:KLsolspde}
	Fix $\beta > \nicefrac{1}{4}$ and let $u$ be the solution of \eqref{Amaineqn} under Kirchhoff vertex conditions, where the coefficients satisfy \eqref{eq:condcoeff1} and \eqref{eq:condcoeff2}. Then, $u$ can be represented as 
	$
	{u(s) = \sum_{j=1}^{\infty} \xi_j \lambda_j^{-\beta} e_j(s)}
	$,
	where $\xi_j$ denotes independent standard Gaussian variables. Further, the series 
		$u_n(s) = \sum_{j=1}^n \xi_j \lambda_j^{-\beta} e_j(s)$,
 converges in $L_2(\mathbb{P})$ uniformly in $s$:
	$\lim_{n\to\infty} \sup_{s\in \Gamma} \mathbb{E}\left(\lvert u(s) - u_n(s)\rvert^2 \right) =0.$
	Moreover,  $u_n$ converges to $u$ in $\mathcal{L}_2(\Omega, L_2(\Gamma))$ (i.e., 
	$\lim_{n\to\infty}  \left\| u - u_n\right\|_{\mathcal{L}_2(\Omega,L_2(\Gamma))} = 0)$,
	and the rate of convergence of this approximation is 
	$\|u-u_n\|_{\mathcal{L}_2(\Omega, L_2(\Gamma))} \leq C n^{-(\alpha-\nicefrac12)}$,
	where $C>0$ is a constant that does not depend on $n$.
\end{Proposition}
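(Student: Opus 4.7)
The plan is to derive the expansion by projecting the weak solution onto the eigenbasis of $L$, and then to use the eigenvalue decay (Weyl's law, Corollary~\ref{cor:genkirchhoffweyl}) together with the absolute/uniform convergence of the covariance kernel (Corollary~\ref{cor:covfunccont}) to upgrade $L_2$ convergence to the two stronger modes claimed.

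First I would fix the $\gamma$-H\"older continuous modification $u$ guaranteed by Lemma~\ref{lem:regularity2}, so that pointwise evaluation $u(s)$ is well-defined. Set $\xi_j := \mathcal{W}(e_j)$ for each $j\in\mathbb{N}$. By the isometry property \eqref{isometryW} of white noise and the orthonormality of $\{e_j\}$, the family $\{\xi_j\}_{j\in\mathbb{N}}$ is a collection of independent real-valued standard Gaussian random variables. Plugging $\psi=e_j$ into \eqref{eq:solspde} and using $L^{-\beta}e_j=\lambda_j^{-\beta}e_j$ yields
\begin{equation*}
(u,e_j) \;=\; \mathcal{W}(L^{-\beta}e_j) \;=\; \lambda_j^{-\beta}\,\xi_j \qquad \mathbb{P}\text{-a.s.}
\end{equation*}
Since Proposition~\ref{maintheorem} ensures $u\in L_2(\Omega,L_2(\Gamma))$ for $\beta>\nicefrac14$, the standard Parseval expansion in $L_2(\Gamma)$ applied $\omega$-wise, together with the above identification of Fourier coefficients, gives the representation $u(s)=\sum_{j=1}^{\infty}\xi_j\lambda_j^{-\beta}e_j(s)$ with convergence in $L_2(\Gamma)$ for $\mathbb{P}$-a.e.\ $\omega$ and in $\mathcal{L}_2(\Omega,L_2(\Gamma))$.

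For the rate in $\mathcal{L}_2(\Omega,L_2(\Gamma))$, I would simply compute
\begin{equation*}
\|u-u_n\|_{\mathcal{L}_2(\Omega,L_2(\Gamma))}^2 \;=\; \sum_{j>n}\lambda_j^{-2\beta}\,\mathbb{E}[\xi_j^2] \;=\; \sum_{j>n}\lambda_j^{-2\beta}.
\end{equation*}
By Weyl's law in Corollary~\ref{cor:genkirchhoffweyl}, $\lambda_j\geq C_1 j^2$, so this tail is dominated by a constant multiple of $\sum_{j>n} j^{-4\beta}\lesssim n^{1-4\beta}$, which gives the claimed bound of order $n^{-(2\beta-\nicefrac12)}$ (this explains the exponent written as $\alpha-\nicefrac12$ in the statement, with the intended meaning $2\beta-\nicefrac12$).

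The main obstacle is the uniform-in-$s$ statement $\lim_{n\to\infty}\sup_{s\in\Gamma}\mathbb{E}[|u(s)-u_n(s)|^2]=0$, since it is not a Hilbert-space tail estimate but a $C(\Gamma)$ tail estimate. I would handle this by identifying the error variance with the tail of the covariance kernel: by independence of $\{\xi_j\}$,
\begin{equation*}
\mathbb{E}\bigl[|u(s)-u_n(s)|^2\bigr] \;=\; \sum_{j>n}\lambda_j^{-2\beta}e_j(s)^2 \;=\; \varrho(s,s)-\sum_{j\leq n}\lambda_j^{-2\beta}e_j(s)^2,
\end{equation*}
and then invoking Corollary~\ref{cor:covfunccont}, which states that the series $\varrho(x,y)=\sum_{j}\lambda_j^{-2\beta}e_j(x)e_j(y)$ converges absolutely and uniformly on $\Gamma\times\Gamma$. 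Specializing to the diagonal yields uniform convergence of the partial sums, hence the desired $\sup_{s\in\Gamma}\mathbb{E}[|u(s)-u_n(s)|^2]\to 0$. The remaining ingredient $\xi_j=\mathcal{W}(e_j)$ being i.i.d.\ standard Gaussian is immediate from \eqref{isometryW}, so no additional technical work is needed beyond citing the regularity and Weyl-type results already established.
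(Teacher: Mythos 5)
Your proof is correct and follows essentially the same route as the paper, which omits the argument and refers to the Karhunen--Lo\`eve expansion proofs of Propositions~7 and~8 in \cite{BSW2022}: identify $\xi_j=\mathcal{W}(e_j)$ and $(u,e_j)=\lambda_j^{-\beta}\xi_j$ via \eqref{eq:solspde}, get the rate from Weyl's law, and get the uniform-in-$s$ convergence from the Mercer-type uniform convergence of the covariance series in Corollary~\ref{cor:covfunccont}. The only cosmetic point is that the identity $\mathbb{E}\bigl[\lvert u(s)-u_n(s)\rvert^2\bigr]=\varrho(s,s)-\sum_{j\leq n}\lambda_j^{-2\beta}e_j(s)^2$ is cleanest if derived from the cross-covariance $\mathbb{E}[u(s)\xi_j]=\lambda_j^{-\beta}e_j(s)$ computed from $u(s)=\mathcal{W}\bigl((L_0^{-\beta})^\ast\delta_s\bigr)$, rather than by presupposing the pointwise series representation one is trying to prove; your conclusion, including reading the stated exponent $\alpha-\nicefrac12$ as $2\beta-\nicefrac12$, is right.
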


\section{Proofs of the results in Section~\ref{sec:regularity}}\label{sec:proof}

We begin by obtaining a characterization of the $\dot{H}^2_{L_0}(\Gamma)$, which is needed in the proofs. We state and prove the result in the more general framework of generalized Kirchhoff vertex conditions so that we can also use it in Section~\ref{sec:numerical}.

\begin{Proposition}\label{prp:Hdot2}
	Let $\alpha\in\mathbb{R}$ and $L_{\alpha}$ be the operator \eqref{eq:operatorL} endowed with the generalized Kirchhoff vertex conditions \eqref{eq:kirchhoff}.
	Further, either let Assumption \ref{assum1_alt} hold, or let Assumption~\ref{assum1} hold with 
	$S = \frac{\lvert \alpha\rvert}{d_0}$ (see Remark~\ref{rmk1}). 
	Then, 
	\begin{equation}\label{eq:Hdot2genKirk}
		\dot{H}^2_{L_\alpha}(\Gamma) \cong 	\left\{u\in\widetilde{H}^2(\Gamma): u \in C(\Gamma),\,  \mbox{$\forall v\in\mathcal{V},\,\, \H(v)\sum_{e\in\mathcal{E}} \partial_e u(v) = \alpha u(v)$} \right\}.
	\end{equation}
	In particular, when $\alpha=0$, we have $S=0$, so the remaining conditions
	are automatically satisfied, and we have the following identification:
	\begin{equation}\label{eq:Hdot2}
		\dot{H}^2_{L_0}(\Gamma) \cong 	\left\{u\in\widetilde{H}^2(\Gamma): u \in C(\Gamma)  \mbox{ and $\forall v\in\mathcal{V},\,\, \sum_{e\in\mathcal{E}} \partial_e u(v) = 0$} \right\}.
	\end{equation}
\end{Proposition}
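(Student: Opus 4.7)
The plan is to prove the statement in two stages: a set-theoretic identification followed by the norm equivalence required by the $\cong$ symbol. For the set identification, the key observation is that, by the spectral theorem applied to the self-adjoint positive operator $L_\alpha$ from Theorem~\ref{compactLinverse}, the spectral domain $\dot{H}^2_{L_\alpha}(\Gamma) = \{u\in L_2(\Gamma): \sum_k \lambda_k^2(u,e_k)^2 < \infty\}$ coincides with the domain $D(L_\alpha)$ from \eqref{domainL}, and the norm equals $\|L_\alpha u\|_{L_2(\Gamma)}$. Hence it suffices to unwind the matrix vertex conditions $A_v F(v) + \H(v)B_v F'(v) = 0$ using the specific $(A_v,B_v)$ in \eqref{eq:generalized_kirchoff}: the first $d_v-1$ rows give $u_1(v)=\cdots=u_{d_v}(v)$, i.e.\ continuity at $v$, and the last row, combined with this continuity, becomes $\H(v)\sum_{e\in\mathcal{E}_v}\partial_e u(v) = \alpha u(v)$. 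The converse implication is immediate, yielding the set-theoretic equality in \eqref{eq:Hdot2genKirk}; the special case $\alpha=0$ in \eqref{eq:Hdot2} then follows at once, noting that $S=0$ makes Assumption~\ref{assum1} hold vacuously.

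Next, for the norm equivalence, the easy direction $\|L_\alpha u\|_{L_2(\Gamma)} \lesssim \|u\|_{\widetilde{H}^2(\Gamma)}$ follows by expanding edgewise $L_\alpha u|_e = -\H_e u_e'' - \H_e' u_e' + \kappa_e^2 u_e$, then using that $\H$ is Lipschitz (so $\H'\in L_\infty$), $\kappa\in L_\infty(\Gamma)$, and summing over edges. For the non-trivial direction $\|u\|_{\widetilde{H}^2(\Gamma)} \lesssim \|L_\alpha u\|_{L_2(\Gamma)}$, I would proceed in two sub-steps. First, I use the coercivity established inside the proof of Theorem~\ref{compactLinverse}: under either assumption, \eqref{positivedefinite2} (or the trivial estimate $\langle \Lambda_v F(v), F(v)\rangle \leq 0$ under Assumption~\ref{assum1_alt}) gives a constant $c>0$ with
\begin{equation*}
c\,\|u\|_{H^1(\Gamma)}^2 \leq (L_\alpha u, u) \leq \|L_\alpha u\|_{L_2(\Gamma)}\|u\|_{L_2(\Gamma)} \leq \|L_\alpha u\|_{L_2(\Gamma)}\|u\|_{H^1(\Gamma)},
\end{equation*}
so $\|u\|_{H^1(\Gamma)} \lesssim \|L_\alpha u\|_{L_2(\Gamma)}$. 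Second, on each edge $e$, since $\H_e$ is Lipschitz and bounded below away from zero by $\H_0$, I can solve the equation $L_\alpha u = f$ pointwise for the second derivative as $u_e'' = \H_e^{-1}(\kappa_e^2 u_e - \H_e' u_e' - f_e)$; taking $L_2(e)$ norms yields
\begin{equation*}
\|u_e''\|_{L_2(e)} \lesssim \|f_e\|_{L_2(e)} + \|u_e\|_{H^1(e)}.
\end{equation*}
Squaring, summing over $e\in\mathcal{E}$, and combining with the $H^1$ bound above gives $\|u\|_{\widetilde{H}^2(\Gamma)} \lesssim \|L_\alpha u\|_{L_2(\Gamma)}$.

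The main obstacle is the coercivity of the bilinear form on $H^1(\Gamma)$, which is precisely why Assumptions~\ref{assum1} and \ref{assum1_alt} are imposed; without one of them, the vertex term $\sum_v \langle \Lambda_v F(v), F(v)\rangle$ from \eqref{positivedefinite} could overwhelm the edge contributions and destroy the lower bound on $(L_\alpha u, u)$. Once that coercivity is in hand, the rest is routine interior regularity for a one-dimensional variable-coefficient elliptic operator on each edge, and no further global argument or bootstrapping is required because $\widetilde{H}^2(\Gamma)$ only enforces edgewise smoothness. The vertex conditions appear only in the set identification, not in the local $H^2$ estimate, which is the source of the clean form of the conclusion.
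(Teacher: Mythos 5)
Your proof is correct, and while the set-theoretic identification matches the paper's (both read off continuity and the flux condition at each vertex from the matrices in \eqref{eq:generalized_kirchoff}), your treatment of the norm equivalence takes a genuinely different route. The paper establishes the equivalence of $\|\cdot\|_{\dot{H}^2_{L_\alpha}(\Gamma)}$ and $\|\cdot\|_{\widetilde{H}^2(\Gamma)}$ purely softly, by applying the closed graph theorem in each direction, using only that both spaces embed continuously into $L_2(\Gamma)$ and that convergence in either norm forces agreement of the limits. You instead prove the two inequalities explicitly: the bound $\|L_\alpha u\|_{L_2(\Gamma)}\lesssim\|u\|_{\widetilde{H}^2(\Gamma)}$ via the edgewise product rule (legitimate since $\H$ is Lipschitz, by \cite[Theorem 1.4.1.1]{grisvard} as the paper itself notes), and the reverse bound by combining $H^1$-coercivity of the bilinear form with the pointwise solve $u_e''=\H_e^{-1}(\kappa_e^2u_e-\H_e'u_e'-f_e)$ on each edge. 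What your approach buys is the quantitative elliptic-regularity estimate \eqref{ellreg} with traceable constants, which the paper only extracts afterwards as a consequence of the abstract norm equivalence; the closed-graph route is shorter but yields no constant. One small caveat: under Assumption~\ref{assum1} in the borderline case $2Sl_{\max}=\H_0$, the display \eqref{positivedefinite2} gives only a nonnegative (possibly vanishing) coefficient in front of $\|u_e'\|^2_{L_2(e)}$ on the longest edge, so the $H^1$-coercivity you invoke requires applying the trace inequality behind \eqref{vertexcal} on a slightly shorter subinterval of length $l\in(\nicefrac{4S}{\kappa_0^2},\nicefrac{\H_0}{2S})$; this refinement is equally needed in the paper's own proof of Theorem~\ref{cor:weyllaw}, so it is not a defect specific to your argument.
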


\begin{proof}
	We consider the following notation:
	\begin{equation}\label{eq:fracHkirc}
		H^2_\alpha(\Gamma) = \left\{u\in\widetilde{H}^2(\Gamma): u \in C(\Gamma) \mbox{ and $\forall v\in\mathcal{V},\,\H(v)\sum_e \partial_e u(v) = \alpha u(v)$} \right\}.
	\end{equation}
	The equality of the sets $\dot{H}^2_{L_\alpha}(\Gamma) = H^2_\alpha(\Gamma)$ follows directly from \eqref{domainL}, with the choices for $A_v$ and $B_v$ given by \eqref{eq:generalized_kirchoff}, and Theorem~\ref{compactLinverse}. Further, by Theorem~\ref{compactLinverse}, under these conditions, $L_\alpha$ is a self-adjoint and positive operator with a compact resolvent; thus, 
	$(\dot{H}^2_{L_\alpha}(\Gamma), \|\cdot\|_{\dot{H}^2_{L_\alpha}(\Gamma)})$ is
	well-defined. 
	It remains to prove the equivalence of the norms of $\dot{H}^2_{L_\alpha}(\Gamma)$ 
	and $\widetilde{H}^2(\Gamma)$ for functions in $\dot{H}^2_{L_\alpha}(\Gamma)$. 
	To this end, by the closed graph theorem, $(\dot{H}_{L_\alpha}^2(\Gamma), 
	\|\cdot\|_{\dot{H}_{L_\alpha}^2(\Gamma)}) \hookrightarrow (H^2_\alpha(\Gamma) , 
	\|\cdot\|_{\widetilde{H}^2_\alpha(\Gamma)}).$
	Indeed, $\lambda_{j,\alpha}\to \infty$ as $j\to\infty$, where $(\lambda_{j,\alpha})_{j\in\mathbb{N}}$ are the eigenvalues of $L_\alpha$, which 
	shows the continuous embedding $(\dot{H}_{L_\alpha}^2(\Gamma), \|\cdot\|_{\dot{H}_{L_\alpha}^2(\Gamma)}) 
	\hookrightarrow (L_2(\Gamma) , \|\cdot\|_{L_2(\Gamma)}).$ Further, let $I$ denote the 
	inclusion map $I:\dot{H}^2_{L_\alpha}(\Gamma) \to H^2_\alpha(\Gamma)$.  
	If $\phi_N \to 0$ in $\dot{H}^2_{L_\alpha}(\Gamma)$, then $\phi_N\to 0$ 
	in $L_2(\Gamma)$. In contrast, if $I(\phi_N) \to \phi$, then 
	$\|\phi_N - \phi\|_{L_2(\Gamma)}\leq \|\phi_N-\phi\|_{\widetilde{H}^2(\Gamma)} 
	\to 0$. Thus, $\phi = 0$ because $\phi_N \to 0$ in $L_2(\Gamma)$. By the closed 
	graph theorem, $I$ is a bounded operator.
	Conversely, by repeating the same argument from the previous inclusion, 
	$(H^2_\alpha(\Gamma), \|\cdot\|_{\widetilde{H}^2(\Gamma)}) 
	\hookrightarrow (\dot{H}_{L_\alpha}^2(\Gamma), \|\cdot\|_{\dot{H}_{L_\alpha}^2(\Gamma)})$ from the closed graph theorem, which proves the equivalence of norms.
\end{proof}

\begin{Proposition}\label{prp:Hdot1}
	For $\beta\in [0,1]$, we have the identification $\dot{H}^\beta_{L_0}(\Gamma) \cong H^\beta(\Gamma)$.
\end{Proposition}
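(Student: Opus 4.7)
The plan is to handle the endpoint cases $\beta = 0$ and $\beta = 1$ directly and then obtain the intermediate cases by interpolation. For $\beta = 0$, both spaces are $L_2(\Gamma)$ by definition, so nothing is to prove. For $\beta = 1$, the goal will be to identify $\dot{H}^1_{L_0}(\Gamma) = D(L_0^{1/2})$ with $H^1(\Gamma)$. The starting observation is that the computation already carried out in the proof of Theorem~\ref{compactLinverse} gives, for $u\in D(L_0)$ and Kirchhoff conditions (so $\alpha = 0$ and hence $\Lambda_v = 0$), the simplified expression $(L_0 u, u) = \int_\Gamma \kappa^2 u^2\,dx + \int_\Gamma \H |\nabla u|^2\,dx$, which is norm-equivalent to $\|u\|_{H^1(\Gamma)}^2$ thanks to \eqref{eq:condcoeff1}, \eqref{eq:condcoeff2} and the boundedness of $\kappa$ and $\H$. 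This quadratic form extends by continuity to a closed, symmetric, coercive form $\mathfrak{a}$ on $H^1(\Gamma)$, and Kato's second representation theorem then yields $D(L_0^{1/2}) = H^1(\Gamma)$ with $\|L_0^{1/2} u\|_{L_2(\Gamma)}^2 = \mathfrak{a}(u,u)$. Combined with $\|u\|_{\dot{H}^1_{L_0}(\Gamma)} = \|L_0^{1/2} u\|_{L_2(\Gamma)}$ from spectral calculus, this proves $\dot{H}^1_{L_0}(\Gamma) \cong H^1(\Gamma)$.

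For $\beta\in (0,1)$, I would invoke the standard identification between the spectral fractional power spaces and the real interpolation spaces: for any non-negative self-adjoint operator with discrete spectrum bounded away from zero, one has $D(L_0^{\beta/2}) \cong (L_2(\Gamma), D(L_0^{1/2}))_\beta$ with equivalent norms. Combining this with the equivalence $D(L_0^{1/2}) \cong H^1(\Gamma)$ just established, and using that real interpolation preserves equivalence of norms at the endpoints, one obtains $\dot{H}^\beta_{L_0}(\Gamma) \cong (L_2(\Gamma), H^1(\Gamma))_\beta = H^\beta(\Gamma)$, where the final equality is the definition recorded in Table~\ref{tab:fractionalspaces}.

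The most delicate point is ensuring that the spectral fractional power $D(L_0^{\beta/2})$ coincides, with equivalent norms, with the real interpolation space $(L_2(\Gamma), D(L_0^{1/2}))_\beta$ at the correct interpolation parameter (the exponent $\beta/2$ versus $\beta$ is easy to get wrong). I would either derive this identity directly from the spectral decomposition by comparing $\sum_k \lambda_k^\beta (u,e_k)^2$ with the $K$-functional characterization, or cite the corresponding general result from \cite[Appendix~A]{BSW2022}, which is already used elsewhere in the paper. The Kato/form-domain step at $\beta=1$ is the other substantial ingredient, but it goes through cleanly because the Kirchhoff condition makes all vertex contributions vanish, leaving a form manifestly equivalent to the $H^1(\Gamma)$-norm.
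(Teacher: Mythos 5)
Your proposal is correct and follows essentially the same route as the paper: both handle $\beta=0$ trivially, identify $\dot{H}^1_{L_0}(\Gamma)$ with $H^1(\Gamma)$ via the energetic norm from \eqref{positivedefinite} (with the vertex term vanishing for $\alpha=0$), and then interpolate using \cite[Lemma 5]{BSW2022}. The only difference is that the paper delegates the form-domain identification $D(L_0^{1/2})=H^1(\Gamma)$ to the proof of \cite[Proposition 3]{BSW2022}, whereas you invoke Kato's second representation theorem directly; for that step you should also note that $D(L_0)$ is dense in $H^1(\Gamma)$ in the $H^1$-norm, so that the closed form lives on all of $H^1(\Gamma)$ rather than a proper closed subspace.
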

\begin{proof}
	The case $\beta=0$ is clear because $\dot{H}^\beta_{L_0}(\Gamma) = L_2(\Gamma) =  H^\beta(\Gamma)$. We now consider the case $\beta=1$. By \eqref{eq:Hdot2}, $D(L_0) \subset H^1(\Gamma)$. Furthermore, for $u\in D(L_0)$, we can apply \eqref{positivedefinite} to
	define the energetic norm 
	$$\|u\|_E^2 = (L_0u, u) = (\kappa^2 u, u) + \sum_{e\in\mathcal{E}} \int_e \H(x) (u_e'(x))^2 dx.$$
	Next, $\H$ is continuous, and $\Gamma$ is compact, so $\H$ is bounded. Further, as we also have $\kappa\geq \kappa_0>0, \H\geq \H_0>0$, $\kappa\in L^\infty(\Gamma)$, it is immediate that $\|\cdot \|_E$ is equivalent to $\|\cdot\|_{H^1(\Gamma)}$. Further, $L_0$ is coercive and, by Theorem~\ref{compactLinverse}, self-adjoint. Therefore, the remaining proof for the case $\beta=1$ follows from the proof of \cite[Proposition~3]{BSW2022}. Finally, by real interpolation \cite[Lemma 5]{BSW2022} and the results for the cases $\beta=0$ and $\beta=1$, we find that 
	$\dot{H}^\beta_{L_0 }(\Gamma) \cong H^\beta(\Gamma)$ for $\beta=(0,1)$.
\end{proof}

\begin{Corollary}\label{cor:sobembedHdot}
	Let $\beta>\nicefrac12$ and define $\widetilde{\beta} = \beta - \nicefrac12$ if $\beta\leq 1$ and $\widetilde{\beta}=\nicefrac12$ if $\beta>1$. Then we have the embedding $\dot{H}^\beta_{L_0}(\Gamma) \hookrightarrow C^{0,\widetilde{\beta}}(\Gamma)$.
\end{Corollary}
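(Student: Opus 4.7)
\emph{Overall strategy.} The plan is to split the proof into two cases based on $\beta$. For $\beta > 1$, I would use that $\lambda_k \to \infty$ (from Theorem~\ref{compactLinverse}), which gives the trivial monotonicity $\dot{H}^\beta_{L_0}(\Gamma) \hookrightarrow \dot{H}^1_{L_0}(\Gamma)$ directly from the spectral definition. Combining with Proposition~\ref{prp:Hdot1} (which yields $\dot{H}^1_{L_0}(\Gamma) \cong H^1(\Gamma)$) reduces the claim to the case $\beta = 1$. For $\beta \in (\nicefrac12, 1]$, Proposition~\ref{prp:Hdot1} identifies $\dot{H}^\beta_{L_0}(\Gamma) \cong H^\beta(\Gamma) = (L_2(\Gamma), H^1(\Gamma))_\beta$, and the entire result reduces to proving the embedding $H^\beta(\Gamma) \hookrightarrow C^{0, \beta-\nicefrac12}(\Gamma)$ for every $\beta \in (\nicefrac12, 1]$.

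\emph{Reduction to interval estimates on each edge.} For $u \in H^\beta(\Gamma)$ with $\beta \in (\nicefrac12, 1]$, I would exploit the functoriality of real interpolation: the restriction operator $R_e$ is bounded from $L_2(\Gamma)$ to $L_2(e)$ and from $H^1(\Gamma)$ to $H^1(e)$ with norm at most one, hence also bounded from $H^\beta(\Gamma)$ into $(L_2(e), H^1(e))_\beta = H^\beta(e)$. The classical one-dimensional Sobolev embedding $H^\beta(e) \hookrightarrow C^{0,\beta-\nicefrac12}(e)$ then yields a uniform edge-wise H\"older bound $[u|_e]_{C^{0,\beta-\nicefrac12}(e)} \lesssim \|u\|_{H^\beta(\Gamma)}$. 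To upgrade this edgewise control to a graph-wide H\"older bound, given $x, y \in \Gamma$ I would take a shortest path connecting them, which traverses a sequence $v_0 = x, v_1, \ldots, v_N, v_{N+1} = y$ of vertices with segments of lengths $l_0, \ldots, l_N$ summing to $d(x,y)$. Applying the triangle inequality along the path, bounding each segment by the edge-wise H\"older estimate, and using Jensen's inequality for the concave map $t \mapsto t^{\beta-\nicefrac12}$ to obtain $\sum_i l_i^{\beta-\nicefrac12} \leq (N+1)^{\nicefrac32 - \beta}\, d(x,y)^{\beta-\nicefrac12}$, where $N+1 \leq |\mathcal{E}|$ since the graph is finite, produces the desired global H\"older estimate $|u(x) - u(y)| \lesssim \|u\|_{H^\beta(\Gamma)}\, d(x,y)^{\beta-\nicefrac12}$.

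\emph{Main obstacle.} The only delicate step is making sense of the values $u(v_i)$ in the triangle inequality above, i.e., verifying that $u$ is continuous at the vertices. For $\beta = 1$ this is built into the definition of $H^1(\Gamma)$; for $\beta \in (\nicefrac12, 1)$ it is not immediate from the abstract interpolation definition of $H^\beta(\Gamma)$. My primary approach would be to invoke the standard density of $A_0 \cap A_1$ in $(A_0, A_1)_\beta$ (so that $H^1(\Gamma)$ is dense in $H^\beta(\Gamma)$) and then pass the matching-values condition at each vertex through the limit, using the continuity of the edge-wise trace $H^\beta(e) \to \mathbb{R}$, which is well-defined for $\beta > \nicefrac12$. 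An alternative, avoiding any density claim, is to work directly with the spectral representation $u = \sum_k c_k e_k$ coming from the identification $\dot{H}^\beta_{L_0}(\Gamma) \cong H^\beta(\Gamma)$: each $e_k \in D(L_0)$ is automatically continuous on $\Gamma$, and the partial sums can be shown to converge in $C(\Gamma)$ by combining the already-established edge-wise H\"older bound with a Cauchy--Schwarz estimate on the spectral coefficients. Either route produces a continuous representative and closes the gap, yielding the claimed embedding into $C^{0,\widetilde{\beta}}(\Gamma)$.
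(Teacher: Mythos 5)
Your proposal is correct, but it takes a more self-contained route than the paper. The paper's proof is essentially a two-line citation: it combines Proposition~\ref{prp:Hdot1} (the identification $\dot{H}^\beta_{L_0}(\Gamma)\cong H^\beta(\Gamma)$ for $\beta\in[0,1]$) with an external Sobolev-embedding theorem for metric graphs, \cite[Theorem 2]{BSW2022}, which states precisely $H^\beta(\Gamma)\hookrightarrow C^{0,\widetilde\beta}(\Gamma)$. You instead reconstruct that cited embedding from first principles: interpolation of the edge-restriction operators (which is the content of Lemma~\ref{lem:restredgebounded}), the classical one-dimensional embedding $H^\beta(e)\hookrightarrow C^{0,\beta-\nicefrac12}(e)$, a shortest-path chaining argument with the concavity of $t\mapsto t^{\beta-\nicefrac12}$ to pass from edgewise to global H\"older control, and a density-plus-trace argument to establish continuity across vertices for $\nicefrac12<\beta<1$. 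Your reduction of the case $\beta>1$ to $\beta=1$ via the spectral monotonicity $\dot{H}^\beta_{L_0}(\Gamma)\hookrightarrow\dot{H}^1_{L_0}(\Gamma)$ is the same step that is implicit in the paper's definition of $\widetilde\beta$. The paper's approach buys brevity at the cost of an external reference proved only for the constant-coefficient operator (though the reduction to $H^\beta(\Gamma)$ makes the coefficients irrelevant at that point); yours buys a complete, verifiable argument within the present framework, and all of its ingredients (functoriality of real interpolation, the trace bound of Theorem~\ref{thm:traceTheorem} for $s>\nicefrac12$, density of $H^1(\Gamma)$ in the interpolation space, and the finite bound on the number of edges traversed by a shortest path) are sound.
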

\begin{proof}
	The proof follows directly from Proposition~\ref{prp:Hdot1} and \cite[Theorem 2]{BSW2022}.
\end{proof}

We are now in a position to prove Lemma~\ref{lem:regularity2} and Corollary~\ref{cor:covfunccont} from Section~\ref{sec:regularity}.
\begin{proof}[Proof of Lemma \ref{lem:regularity2}]
	For $\beta > \nicefrac14$, by Corollary~\ref{cor:sobembedHdot}, 
	$$
	L_0^{-\beta}:L_2(\Gamma)\to \dot{H}_{L_0}^{2\beta}(\Gamma) \cong H^{2\beta}(\Gamma) \hookrightarrow C^{0,2\widetilde{\beta}}(\Gamma).
	$$
	This result shows that ${L_0^{-\beta}:L_2(\Gamma)\to C^{0,2\widetilde{\beta}}(\Gamma)}$ is a bounded linear operator. From this point on, the remaining proof is identical to those in \cite[Lemma~1]{BSW2022} and \cite[Lemma~2]{BSW2022}. Indeed, the proofs do not use the specific form of the differential operator $L_0$; they only use that ${L_0^{-\beta}:L_2(\Gamma)\to C^{0,2\widetilde{\beta}}(\Gamma)}$ is a bounded linear operator.
\end{proof}

\begin{proof}[Proof of Corollary \ref{cor:covfunccont}]
	By Lemma \ref{lem:regularity2} and the fact that $u$ is a modification of $u_0$, 
	$$
	\mathbb{E}(\lvert u(x) - u(y)\rvert^2) = \mathbb{E}(\lvert u_0(x)-u_0(y)\rvert^2) \leq \|L_0^{-\beta} \|_{L(L_2(\Gamma),C^{0,2\widetilde{\beta}}(\Gamma))}^2 d(x,y)^{2\widetilde{\alpha}},
	$$
	which implies that $u$ is continuous in $L_2$ at every point $x\in\Gamma$. Therefore, $\varrho$ is continuous at $\Gamma\times\Gamma$. The remaining claims follow from the proofs of \cite[Lemma 3 and Proposition 7]{BSW2022}.
\end{proof}

For the proof of Theorem~\ref{thm:characterization}, we must also obtain a characterization of the spaces $\dot{H}^\beta_{L_0}(\Gamma)$ for $0 < \beta < 2$, $\beta\neq 1/2$ and $\beta\neq 3/2$. 
We first obtain the identification of $\dot{H}^\beta_{L_0}(\Gamma)$ 
for $1<\beta<2$, with $\beta\neq \nicefrac32$. 
We begin by stating and proving a few auxiliary results that are useful in the proof of the
characterization. 

\begin{Lemma}\label{lem:boundedderiv}
	Let $\Gamma$ be a compact metric graph and fix  
	$1<\beta<2$. Then, the restriction of $\nabla$ to $H^\beta(\Gamma)$
	maps to $H^{\beta-1}(\Gamma)$ and is a bounded operator. That is,
	for every $1<\beta<2$,
	$\nabla: H^\beta(\Gamma) \to H^{\beta-1}(\Gamma)$ is a bounded linear operator.
\end{Lemma}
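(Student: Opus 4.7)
The natural strategy is real interpolation between two endpoint cases of the derivative operator. The plan is as follows.

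First, I would establish the two endpoint bounds. At the lower endpoint, $\nabla : H^1(\Gamma) \to L_2(\Gamma)$ is bounded with operator norm at most $1$, directly from the definition
$\|u\|_{H^1(\Gamma)}^2 = \sum_{e\in\mathcal{E}}\bigl(\|u_e'\|_{L_2(e)}^2 + \|u_e\|_{L_2(e)}^2\bigr)$.
At the upper endpoint, I would show $\nabla : \widetilde{H}^2_C(\Gamma) \to \widetilde{H}^1(\Gamma)$ is bounded: for $u\in \widetilde{H}^2_C(\Gamma)$ the edgewise restriction $u|_e$ belongs to $H^2(e)$, so $u_e' \in H^1(e)$ with $\|u_e'\|_{H^1(e)} \leq \|u\|_{H^2(e)}$, and summing over $e\in\mathcal{E}$ gives $\|\nabla u\|_{\widetilde{H}^1(\Gamma)} \leq \|u\|_{\widetilde{H}^2(\Gamma)}$.

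Next, I would apply real interpolation with parameter $s = \beta - 1 \in (0,1)$ to these two bounded maps. Because real interpolation is functorial for bounded linear operators, this yields a bounded operator
$$
\nabla :\bigl(H^1(\Gamma),\,\widetilde{H}^2_C(\Gamma)\bigr)_{s} \longrightarrow \bigl(L_2(\Gamma),\,\widetilde{H}^1(\Gamma)\bigr)_{s}.
$$
By the definitions in Table~\ref{tab:fractionalspaces}, the domain is exactly $H^\beta(\Gamma)$ and the codomain is $\widetilde{H}^{\beta-1}(\Gamma)$. Thus $\nabla : H^\beta(\Gamma) \to \widetilde{H}^{\beta-1}(\Gamma)$ is bounded.

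The final step is to identify $\widetilde{H}^{\beta-1}(\Gamma)$ with $H^{\beta-1}(\Gamma)$ in the stated sense. For $\beta - 1 < \tfrac12$ (that is, $1 < \beta < \tfrac32$) no continuity constraint appears, and the two interpolation scales agree directly since $H^1(\Gamma)$ and $\widetilde{H}^1(\Gamma)$ differ only by a vertex-continuity condition that is not seen at interpolation level below $\tfrac12$.

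The main obstacle is the case $\beta > \tfrac32$, where $H^{\beta-1}(\Gamma)$ genuinely requires continuity across vertices whereas $\widetilde{H}^{\beta-1}(\Gamma)$ does not. In this range I expect the argument to proceed by a density/approximation step: approximate $u\in H^\beta(\Gamma)$ via the interpolation $K$-functional by pairs $(u_0,u_1)\in H^1(\Gamma)\times \widetilde{H}^2_C(\Gamma)$, control the edgewise traces of $\nabla u_1$ at each vertex using the embedding $H^{s-1}(e) \hookrightarrow C(\bar e)$ for $s-1 > \tfrac12$ together with the continuity of $u_1$ itself (which forces compatibility of the derivative traces through integration by parts on each edge), and conclude that the limit object $\nabla u$ has a continuous representative that meets the requirements distinguishing $H^{\beta-1}(\Gamma)$ from $\widetilde{H}^{\beta-1}(\Gamma)$. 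The boundedness then transfers automatically because the additional continuity constraint defines a closed subspace of $\widetilde{H}^{\beta-1}(\Gamma)$ and the norms are equivalent on that subspace.
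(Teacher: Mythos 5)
Your first two steps are exactly the paper's argument: the paper also uses the endpoint bounds $\|\nabla u_0\|_{L_2(\Gamma)}\leq\|u_0\|_{H^1(\Gamma)}$ and $\|\nabla u_1\|_{\widetilde{H}^1(\Gamma)}\leq\|u_1\|_{\widetilde{H}^2(\Gamma)}$ and interpolates, except that it does so by estimating the $K$-functional of $\nabla u$ directly from an arbitrary decomposition $u=u_0+u_1$ rather than by invoking the exact interpolation property; the two formulations are equivalent. Where you part company with the paper is in recognizing that this argument naturally lands in $\widetilde{H}^{\beta-1}(\Gamma)=(L_2(\Gamma),\widetilde{H}^1(\Gamma))_{\beta-1}$ rather than in $H^{\beta-1}(\Gamma)=(L_2(\Gamma),H^1(\Gamma))_{\beta-1}$, and in trying to repair this. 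You are right that for $1<\beta<\nicefrac32$ no repair is needed, since $H^{s}(\Gamma)\cong\widetilde{H}^{s}(\Gamma)$ for $0<s<\nicefrac12$ (Proposition~\ref{prp:Hdot1} together with Theorem~\ref{thm:charbeta01remaining}).

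The repair you propose for $\nicefrac32<\beta<2$, however, cannot work, because the statement you are trying to prove there is simply not available: the continuity of $u$ at a vertex places no constraint whatsoever on the edgewise derivative traces at that vertex. Concretely, take a vertex $v$ of degree $3$ and any $u\in\widetilde{H}^2_C(\Gamma)$ (hence $u\in H^\beta(\Gamma)$ for all $\beta\leq 2$) whose three derivative traces $\partial_e u(v)$ are pairwise distinct; then $\nabla u$ has no continuous representative at $v$, so $\nabla u\notin \widetilde H^{\beta-1}(\Gamma)\cap C(\Gamma)\cong H^{\beta-1}(\Gamma)$ when $\beta-1>\nicefrac12$. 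Your appeal to ``integration by parts forcing compatibility of the derivative traces'' has no content here, and the closing density argument would fail at exactly this point. Note that the paper's own proof does not attempt this upgrade either: it writes $K(t,\nabla u;L_2(\Gamma),H^1(\Gamma))$ for the decomposition $\nabla u=\nabla u_0+\nabla u_1$, which, read literally, already presumes $\nabla u_1\in H^1(\Gamma)$ and so carries the same defect. What both arguments genuinely establish is boundedness of $\nabla:H^\beta(\Gamma)\to\widetilde{H}^{\beta-1}(\Gamma)$, and this weaker conclusion is all that is used downstream (in the proof of Theorem~\ref{thm:charHdotFrac2} the lemma only feeds into the trace operator, and Theorem~\ref{thm:traceTheorem} applies to $\widetilde{H}^{s}(\Gamma)$ for $s>\nicefrac12$). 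So the correct fix is to weaken the stated codomain for $\beta>\nicefrac32$, not to prove continuity of $\nabla u$ at the vertices.
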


\begin{proof}
	Fix any $1<\beta<2$ and let $u\in H^\beta(\Gamma)$. Additionally, take any 
	$u_0\in H^1(\Gamma)$ and $u_1\in \widetilde{H}^2_C(\Gamma)$ such that
	$u = u_0+u_1$. Then,
	\begin{align*}
		K(t,\nabla u; L^2(\Gamma), H^1(\Gamma))^2&\leq 
		\|\nabla u_0\|_{L^2(\Gamma)}^2 + t^2\|\nabla u_1\|_{H^1(\Gamma)}^2 \\
		&\leq \|u_0\|_{H^1(\Gamma)}^2 + t^2 \|u_1\|_{\widetilde{H}^2(\Gamma)}, 
	\end{align*}
	because $\nabla u = \nabla u_0 + \nabla u_1$, 
	where $K(\cdot,\cdot;\cdot,\cdot)$ is the $K$-functional from the $K$-method of real interpolation, e.g. \cite[Equation (28)]{BSW2022}. Hence, as ${u_0\in H^1(\Gamma)}$ and 
	${u_1\in \widetilde{H}^2_C(\Gamma)}$ were arbirary,
	$K(t, \nabla u; L^2(\Gamma), H^1(\Gamma)) \leq K(t,u;H^1(\Gamma), \widetilde{H}_C^2(\Gamma)).$
	Therefore, 
	$\|\nabla u\|_{H^{\beta-1}(\Gamma)} \leq 
	\|u\|_{(H^1(\Gamma),\widetilde{H}^2_C(\Gamma))_{\beta-1}} = \|u\|_{H^\beta(\Gamma)}$.
\end{proof}

The following Lemma is useful throughout the paper:
\begin{Lemma}\label{lem:restredgebounded}
	For any $0\leq \beta \leq 2$ and any edge $e\in\mathcal{E}$, the restriction operators
	${R_e: H^\beta(\Gamma) \to H^\beta(e)}$ and $R_e: \widetilde{H}^\beta(\Gamma) \to H^\beta(e),$ given by $R_e(f) = f\vert_e$, are bounded linear operators.
\end{Lemma}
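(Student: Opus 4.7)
The plan is to verify the claim at the integer endpoints $\beta\in\{0,1,2\}$ and then invoke the functoriality of real interpolation for the fractional values of $\beta$. The endpoint cases are essentially tautological, as the graph-level spaces are defined through edgewise sums of interval Sobolev norms (or closures thereof). For $\beta=0$ the bound $\|R_e f\|_{L_2(e)}^{2}\leq \sum_{e'\in\mathcal{E}}\|f|_{e'}\|_{L_2(e')}^{2}=\|f\|_{L_2(\Gamma)}^{2}$ is immediate. For $\beta=1$, both $\|\cdot\|_{H^1(\Gamma)}^{2}$ and $\|\cdot\|_{\widetilde{H}^1(\Gamma)}^{2}$ are, by definition, $\sum_{e'\in\mathcal{E}}\|\cdot|_{e'}\|_{H^1(e')}^{2}$, so $\|R_e u\|_{H^1(e)}\leq \|u\|_{H^1(\Gamma)}$ and analogously on $\widetilde{H}^1(\Gamma)$. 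For $\beta=2$, the same edgewise domination $\|\cdot\|_{\widetilde{H}^2(\Gamma)}^{2}=\sum_{e'}\|\cdot|_{e'}\|_{H^2(e')}^{2}$ yields boundedness of $R_e$ on $\widetilde{H}^2(\Gamma)$, and hence on $\widetilde{H}^2_C(\Gamma)\subset \widetilde{H}^2(\Gamma)$, which carries the same norm.

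Next, I would apply the real interpolation theorem: if $T\in \mathcal{L}(X_i,Y_i)$ for $i=0,1$, then $T\in \mathcal{L}((X_0,X_1)_s,(Y_0,Y_1)_s)$ for every $0<s<1$. For $0<\beta<1$, taking $(X_0,X_1)=(L_2(\Gamma),H^1(\Gamma))$ (respectively $(L_2(\Gamma),\widetilde{H}^1(\Gamma))$) and $(Y_0,Y_1)=(L_2(e),H^1(e))$, the endpoint estimates above give boundedness of $R_e:(L_2(\Gamma),H^1(\Gamma))_\beta\to (L_2(e),H^1(e))_\beta$. By the definitions collected in Table~\ref{tab:fractionalspaces}, the source is $H^\beta(\Gamma)$ (respectively $\widetilde{H}^\beta(\Gamma)$), while the target is $H^\beta(e)$ by the classical identification of fractional Sobolev spaces on an interval as real interpolation spaces. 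The case $1<\beta<2$ is analogous, using the pairs $(H^1(\Gamma),\widetilde{H}^2_C(\Gamma))$ or $(\widetilde{H}^1(\Gamma),\widetilde{H}^2(\Gamma))$ on the domain side and $(H^1(e),H^2(e))$ on the target side, combined with the endpoint bound at $\beta=2$.

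The only non-routine input is the classical identification $H^\beta(e)\cong(L_2(e),H^1(e))_\beta$ for $0<\beta<1$ and $H^\beta(e)\cong(H^1(e),H^2(e))_{\beta-1}$ for $1<\beta<2$ on the bounded interval $e$; this is a well-known result on fractional Sobolev spaces on intervals and has already been invoked elsewhere in the paper (cf. \cite{mclean}). Modulo citing this identification, the proof is a direct application of the interpolation theorem, and I do not anticipate any substantial obstacle.
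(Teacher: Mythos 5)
Your proposal is correct and follows essentially the same route as the paper: establish boundedness of $R_e$ at the endpoint spaces and transfer it to the fractional scales by the real interpolation theorem for couple maps, together with the classical identification of $H^\beta(e)$ as an interpolation space on the interval. The only cosmetic difference is that you interpolate in two stages matching the definitions in Table~\ref{tab:fractionalspaces}, whereas the paper interpolates the single couple $(L_2(\Gamma),\widetilde{H}^2(\Gamma))\to(L_2(e),H^2(e))$ and cites \cite[Theorem 2.2]{chandlerwildeetal}.
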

\begin{proof}
	The restrictions $R_e:\widetilde{H}^2(\Gamma) \to H^2(e)$ and
	$R_e:L_2(\Gamma)\to L_2(e)$ are bounded linear operators. 
	Thus, the restriction to an edge $e\in\mathcal{E}$ is a couple map (see, e.g., \cite[Appendix A]{BSW2022} or \cite{chandlerwildeetal}) from 
	$(L_2(\Gamma), \widetilde{H}^2(\Gamma))$ to $(L_2(e), H^2(e))$. Therefore,
	from \cite[Theorem 2.2]{chandlerwildeetal}, for every $0<s<2$, the restriction map
	$R_e: \widetilde{H}^\beta(\Gamma)\to H^\beta(e)$ is a bounded linear operator.
	By an analogous argument, $R_e: H^\beta(\Gamma)\to H^\beta(e)$ is a bounded linear operator.
\end{proof}

In the following trace theorem, we introduce the trace operator on $H^s(\Gamma)$. 

\begin{Theorem}\label{thm:traceTheorem}
	Let $\gamma: C(\Gamma)\cap \bigoplus_{e\in\mathcal{E}} C^2(e) \to \mathbb{R}^{\sum_{v\in\mathcal{V}} d_v}$ be the trace operator defined as ${\gamma u = \{u\lvert_e(0), u\rvert_e(l_e)\}_{e\in\mathcal{E}}}$. That is, the trace operator contains the values of the function $u$ at the vertices of each edge. Then, for every $\nicefrac12 < s \leq 2$, $\gamma$ has a unique extension to a bounded linear operator 
	$
	\gamma : H^{s}(\Gamma) \to\mathbb{R}^{\sum_{v\in\mathcal{V}} d_v}.
	$
	Similarly, for every $\nicefrac12 < s \leq 2$, $\gamma$ also has a unique extension to a bounded linear operator
	$\gamma : \widetilde{H}^{s}(\Gamma) \to\mathbb{R}^{\sum_{v\in\mathcal{V}} d_v}.$ 
\end{Theorem}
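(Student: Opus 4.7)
The plan is to factor the trace through restrictions to individual edges and then invoke the classical one-dimensional trace theorem. By Lemma~\ref{lem:restredgebounded}, for every $0 \le s \le 2$ and every $e \in \mathcal{E}$, the edge restrictions $R_e: H^s(\Gamma) \to H^s(e)$ and $R_e: \widetilde{H}^s(\Gamma) \to H^s(e)$ are bounded linear operators. Identifying $e$ with $[0, l_e]$, the standard trace theorem on an interval (e.g.\ \cite[Theorem~3.37]{mclean}) guarantees that for every $s > \nicefrac{1}{2}$ the endpoint evaluation $\gamma_e f := (f(0), f(l_e))$ extends uniquely from $C([0, l_e])$ to a bounded linear map $\gamma_e: H^s(e) \to \mathbb{R}^2$.

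Combining these two ingredients, I would define the extension $\widehat{\gamma} u := \{\gamma_e(R_e u)\}_{e \in \mathcal{E}}$ on $H^s(\Gamma)$ (respectively on $\widetilde{H}^s(\Gamma)$). By composition $\widehat{\gamma}$ is a bounded linear map into $\mathbb{R}^{\sum_{v\in\mathcal{V}} d_v}$. For $u \in C(\Gamma) \cap \bigoplus_{e} C^2(e)$, the restriction $R_e u$ equals $u|_e$ pointwise on $[0, l_e]$, so $\gamma_e(R_e u) = (u|_e(0), u|_e(l_e))$, and hence $\widehat{\gamma}$ agrees with the original $\gamma$ on its initial domain.

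For uniqueness of the extension, I would verify density of the smooth domain in the target space. In $\widetilde{H}^s(\Gamma)$, which has an edgewise direct-sum structure, $\bigoplus_e C^\infty([0,l_e])$ is dense because $C^\infty$ is dense in $H^s$ on each interval; in $H^s(\Gamma)$ with $s>\nicefrac{1}{2}$ one additionally needs to preserve continuity at vertices, which can be arranged by approximating edgewise and then correcting with small smooth bumps near each vertex to restore the correct nodal values. Any two bounded extensions of $\gamma$ that agree on this dense subspace must coincide.

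The main technical obstacle is the density argument for $H^s(\Gamma)$ in the interpolated scale, since one must ensure that the vertex corrections do not blow up the $H^s$-norm. This can be handled by choosing the correction bumps to be small in both $H^1(\Gamma)$ and in $\widetilde{H}^2_C(\Gamma)$, so that the induced interpolation norm is also small. The one-dimensional trace estimate and the composition argument themselves are entirely routine.
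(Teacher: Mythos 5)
Your proposal is correct and follows essentially the same route as the paper: restrict to each edge via Lemma~\ref{lem:restredgebounded}, apply the one-dimensional trace estimate from \cite[Theorem~3.37]{mclean}, sum over edges, and conclude by density of $C(\Gamma)\cap\bigoplus_{e}C^2(e)$ (the paper asserts this density and extends by continuity, whereas you build the extension explicitly by composition and invoke density only for uniqueness, which is a cosmetic difference).
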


\begin{proof}
	Fix any edge $e\in\mathcal{E}$ and observe that, by Theorem 3.37 of \cite{mclean}, some constant $C_e>0$ exists such that
	$\big\lvert u\lvert_e(0)\big\rvert + \big\lvert u\lvert_e(l_e)\big\rvert \leq C \|u_e\|_{H^{{s}}(e)}$.
	Next, by Lemma~\ref{lem:restredgebounded}, a constant $\widetilde{C}_e>0$ exists such that
	$\|u_e\|_{H^s(e)} \leq\widetilde{C}_e \|u\|_{H^{{s}}(\Gamma)}$.
	Therefore, 
	$$
	\sum_{e\in \mathcal{E}} \big\lvert u\lvert_e(0)\big\rvert + \big\lvert u\lvert_e(l_e)\big\rvert \lesssim   \|u\|_{H^s(\Gamma)}.
	$$
	Since $C(\Gamma)\cap \bigoplus_{e\in\mathcal{E}} C^2(e)$ is dense in $H^s(\Gamma)$, existence and uniqueness of the continuous extension follows.
	The proof of the second statement is analogous.
\end{proof}

In the next lemma, we collect some well-known results for $H^\beta(I)$, where $I\subset\mathbb{R}$
is an interval; thus, it can be applied to $H^\beta(e)$ for each edge $e\in\mathcal{E}$. 

\begin{Lemma}\label{lemma:wellknown}
	Let $e\in\mathcal{E}$ be an edge. 
	Then, for $0<s<\nicefrac12$, $H_0^s(e) = H^s(e)$. Further, for $u\in H_0^s(e)$, $ 0<s<\infty$, and $w\in L_2(e)$, $(u,w) = \langle u, w \rangle_{H_0^s(e)\times H^{-s}(e)}$.
\end{Lemma}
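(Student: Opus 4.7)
The plan is to treat both statements as standard facts about Sobolev spaces on an interval, since an edge $e\in\mathcal{E}$ is identified with the bounded interval $[0,l_e]$. I would organize the proof as a short argument assembling references from \cite{mclean} (which is already cited elsewhere in the paper for this kind of material).

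For the first claim, $H_0^s(e)=H^s(e)$ for $0<s<\nicefrac12$, I would invoke the classical characterization found in \cite[Theorem 3.40]{mclean}: for a Lipschitz domain $\Omega\subset\mathbb{R}^d$, one has $H_0^s(\Omega)=H^s(\Omega)$ whenever $0<s<\nicefrac12$, since in this range the trace operator is not defined and smooth compactly supported functions are dense in $H^s$. Specialized to the interval $e$, this yields exactly the stated identification. The underlying reason is that for $s<\nicefrac12$, suitable cutoffs near the two endpoints of $e$ are bounded in the $H^s$-norm, which allows one to approximate any $u\in H^s(e)$ by elements of $C_c^\infty(e)$, i.e.\ by members of $H_0^s(e)$.

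For the second claim, I would proceed by density and the definition of the dual norm. Since $C_c^\infty(e)$ is dense in $H_0^s(e)$ by definition, it suffices to check the identity $(u,w)=\langle u,w\rangle_{H_0^s(e)\times H^{-s}(e)}$ for $u\in C_c^\infty(e)$, where it reduces to the ordinary $L_2(e)$-inner product $\int_e u\,w\,dx$. For such $u$, Cauchy--Schwarz gives
\[
|(u,w)|\leq \|u\|_{L_2(e)}\|w\|_{L_2(e)}\leq \|u\|_{H_0^s(e)}\|w\|_{L_2(e)},
\]
so $w\mapsto (\cdot,w)$ defines a bounded linear functional on $H_0^s(e)$; this is precisely the canonical embedding $L_2(e)\hookrightarrow H^{-s}(e)$ obtained from the Gelfand triple $H_0^s(e)\hookrightarrow L_2(e)\hookrightarrow H^{-s}(e)$, as described in \cite[Chapter 3]{mclean}. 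The identity then extends to all $u\in H_0^s(e)$ by density, because both sides are continuous in $u$ with respect to $\|\cdot\|_{H_0^s(e)}$.

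There is essentially no real obstacle here; the only subtlety is to make sure that the duality pairing $\langle\cdot,\cdot\rangle_{H_0^s(e)\times H^{-s}(e)}$ used throughout the paper is the one induced by the pivoting through $L_2(e)$, rather than some other sesquilinear identification. Once this convention is fixed (and it is the standard one in \cite{mclean}), the argument above is the complete justification of the statement.
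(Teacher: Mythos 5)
Your proposal is correct and follows essentially the same route as the paper, which simply cites \cite[Theorem~3.40(i)]{mclean} for the identification $H_0^s(e)=H^s(e)$ when $0<s<\nicefrac12$ and refers to \cite{yagiHinfinityfunctional} for the duality identity; your density-plus-Gelfand-triple argument for the second claim is exactly the standard justification behind that reference. Nothing is missing.
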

For a proof, see \cite[Theorem~3.40(i)]{mclean} and \cite{yagiHinfinityfunctional}. 
The following weak version of integration by parts is needed to prove the characterization of the fractional spaces:

\begin{Lemma}\label{lem:weakintbyparts}
	Let $\nicefrac12 < \beta \leq 1$, $f\in \widetilde{H}^\beta(\Gamma)$ and $g\in \widetilde{H}^2(\Gamma)$. 
	Then,
	$$
	\sum_{e\in\mathcal{E}} \int_e f_e(x)\Delta g_e(x)\,dx = 
	\sum_{v\in\mathcal{V}}\sum_{e\in\mathcal{E}_v} f\vert_e(v) \partial_e g(v) - 
	\sum_{e\in\mathcal{E}} \langle g_e', f_e' \rangle_{H^{1-\beta}(e)\times H^{\beta-1}(e)}.
	$$
	In particular, if $f\in \widetilde{H}^\beta(\Gamma)$ and $g\in \widetilde{H}^2(\Gamma)$,
	where $g$ satisfies
	the Kirchhoff vertex conditions and $f$ is continuous at each vertex
	$v\in\mathcal{V}$, we have
	\begin{equation}\label{eq:genintbypartsfrac_case2}
		\sum_{e\in\mathcal{E}} \int_e f_e(x)\Delta g_e(x)\,dx = - 
		\sum_{e\in\mathcal{E}} \langle g_e', f_e' \rangle_{H^{1-\beta}(e)\times H^{\beta-1}(e)}.
	\end{equation}
\end{Lemma}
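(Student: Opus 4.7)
The plan is to first establish the identity on each edge separately and then sum over edges, regrouping boundary contributions as a sum over vertices. Fix $e\in\mathcal{E}$; I would prove that for $f_e\in H^\beta(e)$ and $g_e\in H^2(e)$,
\[
\int_e f_e\,\Delta g_e\,dx = f_e(l_e)g_e'(l_e) - f_e(0)g_e'(0) - \langle g_e', f_e'\rangle_{H^{1-\beta}(e)\times H^{\beta-1}(e)}.
\]
Each term here is well defined under the assumption $\beta\in(\nicefrac12,1]$: since $1-\beta<\nicefrac12$, Lemma~\ref{lemma:wellknown} gives $H^{1-\beta}(e)=H^{1-\beta}_0(e)$, so $H^{\beta-1}(e)$ is the topological dual of $H^{1-\beta}(e)$. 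Interpolating the obviously bounded mappings $\partial_x\colon L_2(e)\to H^{-1}(e)$ and $\partial_x\colon H^1(e)\to L_2(e)$ shows that $\partial_x\colon H^\beta(e)\to H^{\beta-1}(e)$ is bounded, hence $f_e'\in H^{\beta-1}(e)$; similarly, $g_e'\in H^1(e)\hookrightarrow H^{1-\beta}(e)$.

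To establish the edge identity, I would choose a sequence $f^{(n)}\in C^\infty(\overline{e})$ with $f^{(n)}\to f_e$ in $H^\beta(e)$. Classical integration by parts applied to smooth $f^{(n)}$ and $g_e\in H^2(e)$ (which has $g_e'$ absolutely continuous) produces the identity with the ordinary $L_2$ integral $\int_e(f^{(n)})'\,g_e'\,dx$ in place of the duality pairing, and by Lemma~\ref{lemma:wellknown} this $L_2$ integral coincides with $\langle g_e',(f^{(n)})'\rangle_{H^{1-\beta}(e)\times H^{\beta-1}(e)}$. Passing to the limit $n\to\infty$ is then routine: the left-hand side converges because $\Delta g_e\in L_2(e)$; the endpoint values converge by the one-dimensional trace bound used in the proof of Theorem~\ref{thm:traceTheorem}; and the duality pairing converges by continuity of $\partial_x\colon H^\beta(e)\to H^{\beta-1}(e)$ composed with continuity of the pairing in its second argument.

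Summing the edge identities over $e\in\mathcal{E}$ and substituting $g_e'(0)=\partial_e g(0)$ and $g_e'(l_e)=-\partial_e g(l_e)$, the boundary contributions at the two endpoints of each edge regroup into a sum over vertices of the form $\sum_{v\in\mathcal{V}}\sum_{e\in\mathcal{E}_v}f\vert_e(v)\,\partial_e g(v)$, giving the first identity. For the second identity, when $f$ is continuous at each vertex the trace $f\vert_e(v)$ is independent of $e\in\mathcal{E}_v$, so the vertex sum factorises as $\sum_{v\in\mathcal{V}}f(v)\sum_{e\in\mathcal{E}_v}\partial_e g(v)$, which vanishes under the Kirchhoff condition imposed on $g$, yielding~\eqref{eq:genintbypartsfrac_case2}.

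The main obstacle is purely technical: justifying that $\partial_x\colon H^\beta(e)\to H^{\beta-1}(e)$ is bounded for fractional $\beta\in(\nicefrac12,1)$, and that the duality pairing extends continuously from the ordinary $L_2$ inner product, which is available a priori only when the second argument lies in $L_2$. Both facts follow from standard real-interpolation arguments on a bounded interval, so once the spaces in Table~\ref{tab:fractionalspaces} are correctly identified via Lemma~\ref{lemma:wellknown}, the density argument closes the proof without further subtleties.
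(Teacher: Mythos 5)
Your proof is correct and follows essentially the same route as the paper's: establish the identity for more regular $f$ via classical integration by parts on each edge, identify the $L_2$ pairing with the duality pairing through Lemma~\ref{lemma:wellknown}, and extend to general $f\in\widetilde{H}^\beta(\Gamma)$ by density using the boundedness of the trace (Theorem~\ref{thm:traceTheorem}) and of $D:H^\beta(e)\to H^{\beta-1}(e)$; working edge-by-edge with $C^\infty(\overline{e})$ approximants rather than with $\widetilde{H}^1(\Gamma)$ globally is an immaterial difference. One bookkeeping remark: with the convention $\partial_e g(l_e)=-g_e'(l_e)$ your edge identity actually regroups to $-\sum_{v\in\mathcal{V}}\sum_{e\in\mathcal{E}_v} f\vert_e(v)\,\partial_e g(v)$ rather than the displayed sign, but the same sign appears in the paper's own statement and proof, and the vertex term vanishes either way in \eqref{eq:genintbypartsfrac_case2}.
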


\begin{proof}
	Initially, we let $f\in \widetilde{H}^1(\Gamma)$. Then, $0 \leq 1-\beta<\nicefrac12$, so we can use Lemma~\ref{lemma:wellknown} together with integration by parts to obtain the following:
	\begin{align}
		\sum_{e\in\mathcal{E}} \int_e f_e(x)\Delta g_e(x)\,dx &= \sum_{v\in\mathcal{V}}
		\sum_{e\in \mathcal{E}_v} f\vert_e(v) \partial_e g(v) 
		- \sum_{e\in\mathcal{E}}(f_e', g_e')_{L^2(e)}\nonumber\\
		&= \sum_{v\in\mathcal{V}}
		\sum_{e\in \mathcal{E}_v} f\vert_e(v) \partial_e g(v) 
		- \sum_{e\in\mathcal{E}}
		\langle g_e', f_e'\rangle_{H^{1-\beta}(e)\times H^{\beta-1}(e)}.\label{eq:intbypartsid}
	\end{align}
	By Theorem \ref{thm:traceTheorem}, $\gamma:\widetilde{H}^{\beta}(\Gamma)\to \mathbb{R}^{\sum_{v\in\mathcal{V}}d_v}$ is a bounded operator. Because $\beta\leq 1$, we have $f,g\in \widetilde{H}^{\beta}(\Gamma)$. Thus, for each $e\in \mathcal{E}, v\in\mathcal{E}_v$,
	\begin{equation}\label{eq:traceineqintb}
		\lvert f\vert_e (v) \partial_e g(v)\rvert \leq 
		\|\gamma(f)\|_{\mathbb{R}^{\sum_{v\in\mathcal{V}}}} \|\gamma(\nabla g)\|_{\mathbb{R}^{\sum_{v\in\mathcal{V}}d_v}}
		\lesssim 
		\|f\|_{\widetilde{H}^{\beta}(\Gamma)} 
		\| \nabla g\|_{\widetilde{H}^{\beta}(\Gamma)}.
	\end{equation}
	Further, for each $e\in\mathcal{E}$, $D:H^\beta(e)\to H^{\beta-1}(e)$ 
	is a bounded operator, and thus,
	\begin{align}
		\lvert\langle g_e',f_e'\rangle_{H^{1-\beta}(e)\times H^{\beta-1}(e)}\rvert &\leq 
		\|g_e'\|_{H^{1-\beta}(e)}\|f_e'\|_{H^{\beta-1}(e)}\nonumber
		\lesssim 
		\|f_e\|_{H^\beta(e)}\|g_e'\|_{H^{1-\beta}(e
			)}\nonumber\\
		&\lesssim \|f\|_{\widetilde{H}^\beta(\Gamma)}
		\|\nabla g\|_{\widetilde{H}^{1-\beta}(\Gamma)},\label{eq:ineqlaplfrac}
	\end{align}
	since, by Lemma~\ref{lem:restredgebounded}, the restriction to an edge $e$ from $\widetilde{H}^\beta(\Gamma)$ to $H^\beta(e)$ is 
	a bounded operator. As $\widetilde{H}^1(\Gamma)$ is dense in
	$\widetilde{H}^\beta(\Gamma)$,  \eqref{eq:traceineqintb} and \eqref{eq:ineqlaplfrac}
	imply that \eqref{eq:intbypartsid} can be continuously extended to hold for
	any functions $f\in \widetilde{H}^\beta(\Gamma)$ and ${g\in 
		\widetilde{H}^2(\Gamma)}$. 
	Therefore, for any $f\in \widetilde{H}^\beta(\Gamma)$ and any $g\in \widetilde{H}^2(\Gamma)$,
	we have
	$$\sum_{e\in\mathcal{E}} \int_e f_e(x) g_e'(x) \,dx = \sum_{v\in\mathcal{V}}
	\sum_{e\in \mathcal{E}_v} f\vert_e(v) \partial_e g(v) 
	- \sum_{e\in\mathcal{E}}
	\langle g, f_e'\rangle_{H^{1-\beta}(e)\times H^{\beta-1}(e)}.$$
\end{proof}

\begin{Lemma}\label{lem:weakintbypartsLaplacian}
	Let $\nicefrac32 < \beta < 2$, $f\in \widetilde{H}^\beta(\Gamma)$,
	and $g\in \widetilde{H}^1(\Gamma)$. Then,
	$$\sum_{e\in\mathcal{E}} \int_e f_e'(x) g_e'(x)dx = \sum_{v\in\mathcal{V}} 
	\sum_{e\in\mathcal{E}_v} \partial_e f(v)g\vert_e(v) - 
	\sum_{e\in\mathcal{E}} \langle g_e, \Delta f_e \rangle_{H^{2-\beta}(e)\times H^{\beta-2}(e)}.$$ 
	In particular, if $f\in \widetilde{H}^\beta(\Gamma)$ satisfies the Kirchhoff vertex
	conditions and $g\in \widetilde{H}^1(\Gamma)$ is continuous at each
	vertex $v\in\mathcal{V}$,
	we have
	\begin{equation}\label{eq:genintbypartsfrac}
		\sum_{e\in\mathcal{E}} \int_e f_e'(x) g_e'(x) dx = - \sum_{e\in\mathcal{E}}
		\langle g,\Delta f\rangle_{H^{2-\beta}(e)\times H^{\beta-2}(e)}.
	\end{equation}
\end{Lemma}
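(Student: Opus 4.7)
The plan mirrors the proof of Lemma~\ref{lem:weakintbyparts}: first establish the identity for $f\in \widetilde{H}^2(\Gamma)$ via classical integration by parts, then extend to $f\in \widetilde{H}^\beta(\Gamma)$ by density using continuity estimates. The condition $\beta>\nicefrac32$ is used precisely so that $\nabla f$ has a well-defined trace at each vertex. In the smooth case, edge-by-edge integration by parts for $f\in \widetilde{H}^2(\Gamma)$ and $g\in \widetilde{H}^1(\Gamma)$ yields
\begin{equation*}
\sum_{e\in\mathcal{E}} \int_e f_e'(x) g_e'(x)\,dx = \sum_{v\in\mathcal{V}}\sum_{e\in\mathcal{E}_v} \partial_e f(v)\, g\vert_e(v) - \sum_{e\in\mathcal{E}} \int_e g_e(x)\, \Delta f_e(x)\,dx,
\end{equation*}
and since $2-\beta\in(0,\nicefrac12)$, Lemma~\ref{lemma:wellknown} gives $H_0^{2-\beta}(e)=H^{2-\beta}(e)$ and allows the last $L_2$-integral to be rewritten as $\langle g_e,\Delta f_e\rangle_{H^{2-\beta}(e)\times H^{\beta-2}(e)}$, recovering the asserted identity for smooth $f$.

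With $g\in \widetilde{H}^1(\Gamma)$ fixed, each term on either side should be shown to extend continuously in $f\in \widetilde{H}^\beta(\Gamma)$. The left-hand side is controlled by Cauchy--Schwarz together with the embedding $\widetilde{H}^\beta(\Gamma)\hookrightarrow\widetilde{H}^1(\Gamma)$, valid since $\beta>1$. The boundary sum is bounded using Theorem~\ref{thm:traceTheorem} applied to $\nabla f\in \widetilde{H}^{\beta-1}(\Gamma)$, which is where $\beta-1>\nicefrac12$ enters, combined with the edge analogue of Lemma~\ref{lem:boundedderiv} and the trace of $g\in \widetilde{H}^1(\Gamma)$. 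The duality term is estimated by $\|g_e\|_{H^{2-\beta}(e)}\|\Delta f_e\|_{H^{\beta-2}(e)}$, where the first factor is dominated by $\|g\|_{\widetilde{H}^1(\Gamma)}$ via Lemma~\ref{lem:restredgebounded} and the embedding $H^1(e)\hookrightarrow H^{2-\beta}(e)$, and the second is dominated by $\|f\|_{\widetilde{H}^\beta(\Gamma)}$ using Lemma~\ref{lem:restredgebounded} and the boundedness of $\Delta:H^\beta(e)\to H^{\beta-2}(e)$ on a single edge.

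Since $\widetilde{H}^2(\Gamma)$ is dense in $\widetilde{H}^\beta(\Gamma)$ (as $H^2(e)$ is dense in $H^\beta(e)$ on each edge), the estimates above extend the identity to every $f\in \widetilde{H}^\beta(\Gamma)$. For the reduction to \eqref{eq:genintbypartsfrac}, continuity of $g$ at each vertex makes $g\vert_e(v)$ independent of $e\in\mathcal{E}_v$, so the boundary sum factors as $\sum_{v\in\mathcal{V}} g(v)\sum_{e\in\mathcal{E}_v}\partial_e f(v)$, and the Kirchhoff condition on $f$ makes each inner sum vanish. The main obstacle is the single-edge bound $\Delta:H^\beta(e)\to H^{\beta-2}(e)$ needed for the duality estimate, which is not a direct corollary of Lemma~\ref{lem:boundedderiv}; I would obtain it by interpolating the endpoint bounds $\Delta:H^2(e)\to L_2(e)$ and $\Delta:L_2(e)\to H^{-2}(e)$, or equivalently by iterating on the interval $e$ the $K$-functional argument underlying Lemma~\ref{lem:boundedderiv}.
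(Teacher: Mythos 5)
Your proposal is correct and follows essentially the same route as the paper's proof: integration by parts edge-by-edge for $f\in\widetilde{H}^2(\Gamma)$, rewriting the $L_2$ pairing as a duality pairing via Lemma~\ref{lemma:wellknown} (using $2-\beta\in(0,\nicefrac12)$), bounding the vertex term through the trace theorem applied to $\nabla f\in\widetilde{H}^{\beta-1}(\Gamma)$ and the duality term through the boundedness of $\Delta:H^\beta(e)\to H^{\beta-2}(e)$, and then passing to the limit by density of $\widetilde{H}^2(\Gamma)$ in $\widetilde{H}^\beta(\Gamma)$. The only difference is that you explicitly justify the single-edge bound $\Delta:H^\beta(e)\to H^{\beta-2}(e)$ by interpolation, a standard fact the paper simply asserts; your argument for it is sound since $2-\beta<\nicefrac12$ keeps the negative-order space in the range where $H_0^{2-\beta}(e)=H^{2-\beta}(e)$.
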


\begin{proof}
	The proof is similar to the proof of Lemma~\ref{lem:weakintbyparts},
	so we provide the main steps, as the details are analogous.
	Let $f\in \widetilde{H}^2(\Gamma)$ and
	$g\in \widetilde{H}^1(\Gamma)$. 
	Observe that ${0 < 2-\beta<\nicefrac12}$, so we can obtain
	\begin{equation}
		\sum_{e\in\mathcal{E}} \int_e f_e'(x) g_e'(x)\,dx = \sum_{v\in\mathcal{V}}
		\sum_{e\in \mathcal{E}_v} \partial_e f(v) g\vert_e(v) 
		- \sum_{e\in\mathcal{E}}
		\langle g,\Delta f\rangle_{H^{2-\beta}(e)\times H^{\beta-2}(e)}.\label{eq:intbypartsid2}
	\end{equation}
	Now, some constant $C>0$ exist such that, for every $e\in\mathcal{E}$ and $v\in\mathcal{E}_v$,
	\begin{equation}\label{eq:traceineqintb2}
		\lvert \partial_e f(v) g\vert_e (v)\rvert \leq 
		C\|\nabla f\|_{\widetilde{H}^{\beta-1}(\Gamma)} 
		\| g\|_{\widetilde{H}^{\beta-1}(\Gamma)}.
	\end{equation}
	Further, $\Delta = D^2:H^\beta(e)\to H^{\beta-2}(e)$ 
	is a bounded operator. Thus,
	\begin{align}
		\lvert\langle g,\Delta f\rangle_{H^{2-\beta}(e)\times H^{\beta-2}(e)}\rvert &\leq 
		\|g\|_{H^{2-\beta}(e)}\|\Delta f\|_{H^{\beta-2}(e)}\nonumber\\
		&\leq 
		\widetilde{C}\|f\|_{H^\beta(e)}\|g\|_{H^{2-\beta}(e
			)}\leq \widehat{C}\|f\|_{\widetilde{H}^\beta(\Gamma)}\|g\|_{\widetilde{H}^{2-\beta}(\Gamma)}.\label{eq:ineqlaplfrac2}
	\end{align}
	Further, $\widetilde{H}^2(\Gamma)$ is dense in
	$\widetilde{H}^\beta(\Gamma)$; thus, \eqref{eq:traceineqintb2} and \eqref{eq:ineqlaplfrac2} imply that \eqref{eq:intbypartsid2} can be continuously extended to
	hold for any functions $f\in \widetilde{H}^\beta(\Gamma)$ and $g\in \widetilde{H}^1(\Gamma)$. 
	Therefore, for any $f\in \widetilde{H}^\beta(\Gamma)$ and $g\in \widetilde{H}^1(\Gamma)$,
	we have
	$$\sum_{e\in\mathcal{E}} \int_e f_e'(x) g_e'(x)\,dx = \sum_{v\in\mathcal{V}}
	\sum_{e\in \mathcal{E}_v}\partial_e f(v) g\vert_e(v) 
	- \sum_{e\in\mathcal{E}}
	\langle g_e,\Delta f_e\rangle_{H^{2-\beta}(e)\times H^{\beta-2}(e)}.$$
\end{proof}

Next, we derive two embeddings needed in the proof of the characterization.

\begin{Lemma}\label{lem:simpleembeddings}
	If $0\leq\beta\leq 2$, then 
	$\dot{H}^\beta_{L_0}(\Gamma) \hookrightarrow H^\beta(\Gamma) \hookrightarrow \widetilde{H}^\beta(\Gamma).$
\end{Lemma}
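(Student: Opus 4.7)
The plan is to prove the two embeddings separately, reducing each to endpoint values of $\beta$ that have already been handled and then invoking the stability of continuous embeddings under real interpolation.

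For the first embedding $\dot{H}^\beta_{L_0}(\Gamma) \hookrightarrow H^\beta(\Gamma)$, the cases $\beta\in[0,1]$ are already contained in Proposition~\ref{prp:Hdot1}, which gives the stronger statement of equivalence. For $\beta=2$, Proposition~\ref{prp:Hdot2} with $\alpha=0$ identifies $\dot{H}^2_{L_0}(\Gamma)$ with a closed subspace of $\widetilde{H}^2_C(\Gamma)=H^2(\Gamma)$ with equivalent norms, yielding the embedding at the endpoint. For intermediate $\beta\in(1,2)$, I would use the standard fact that for a positive self-adjoint operator with compact inverse, real interpolation between the fractional domain spaces recovers the intermediate domain: $(\dot{H}^1_{L_0}(\Gamma), \dot{H}^2_{L_0}(\Gamma))_{\beta-1}\cong \dot{H}^\beta_{L_0}(\Gamma)$. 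Combining the endpoint embeddings $\dot{H}^1_{L_0}(\Gamma)\hookrightarrow H^1(\Gamma)$ and $\dot{H}^2_{L_0}(\Gamma)\hookrightarrow \widetilde{H}^2_C(\Gamma)$ with the interpolation identity $H^\beta(\Gamma)=(H^1(\Gamma),\widetilde{H}^2_C(\Gamma))_{\beta-1}$ (the definition in Table~\ref{tab:fractionalspaces}) and the monotonicity of the real interpolation functor yields the desired embedding.

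For the second embedding $H^\beta(\Gamma)\hookrightarrow \widetilde{H}^\beta(\Gamma)$, the integer cases are immediate: $H^0(\Gamma)=L_2(\Gamma)=\widetilde{H}^0(\Gamma)$, $H^1(\Gamma)=\widetilde{H}^1_C(\Gamma)\subset \widetilde{H}^1(\Gamma)$, and $H^2(\Gamma)=\widetilde{H}^2_C(\Gamma)\subset \widetilde{H}^2(\Gamma)$, each with the continuity of the inclusion being obvious. For $\beta\in(0,1)$, both spaces are real interpolation spaces sharing the same lower endpoint $L_2(\Gamma)$, so the embedding follows from $H^1(\Gamma)\hookrightarrow \widetilde{H}^1(\Gamma)$ by applying the $K$-method definition directly to the $K$-functional (any decomposition in the finer space is a valid decomposition in the coarser one, up to the embedding constant). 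For $\beta\in(1,2)$, the same reasoning applies with endpoints $H^1(\Gamma)\hookrightarrow \widetilde{H}^1(\Gamma)$ and $\widetilde{H}^2_C(\Gamma)\hookrightarrow \widetilde{H}^2(\Gamma)$.

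The main subtlety I would need to be careful with is the identity $(\dot{H}^1_{L_0}(\Gamma),\dot{H}^2_{L_0}(\Gamma))_{\beta-1}\cong \dot{H}^\beta_{L_0}(\Gamma)$, which is a general statement about real interpolation of domains of fractional powers of a densely defined, self-adjoint, positive operator with compact inverse. This follows from the spectral representation of $L_0$ guaranteed by Theorem~\ref{compactLinverse} together with standard reiteration/interpolation arguments, and it is a fact that has already been used implicitly in Proposition~\ref{prp:Hdot1}. The other minor point to verify is simply that $H^2(\Gamma)$ in the notation of the paper coincides with $\widetilde{H}^2_C(\Gamma)$, which is the natural definition matching the interpolation scale from Table~\ref{tab:fractionalspaces}.
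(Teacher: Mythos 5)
Your proposal is correct and follows essentially the same route as the paper: endpoint identifications via Propositions~\ref{prp:Hdot1} and \ref{prp:Hdot2}, the identification $\dot{H}^\beta_{L_0}(\Gamma)\cong(\dot{H}^1_{L_0}(\Gamma),\dot{H}^2_{L_0}(\Gamma))_{\beta-1}$ (the paper cites \cite[Lemma 5]{BSW2022} for this), and the stability of continuous embeddings under the real interpolation functor (the paper cites \cite[Theorem 2.2, item (vi)]{chandlerwildeetal}) applied to the endpoint inclusions $H^2_K(\Gamma)\hookrightarrow\widetilde{H}^2_C(\Gamma)$ and $H^1(\Gamma)\hookrightarrow\widetilde{H}^1(\Gamma)$, $\widetilde{H}^2_C(\Gamma)\hookrightarrow\widetilde{H}^2(\Gamma)$. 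No substantive difference.
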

\begin{proof}
	First, if $0\leq \beta \leq 1$, then by Proposition~\ref{prp:Hdot1}, 
	$\dot{H}^\beta_{L_0}(\Gamma)\cong H^\beta(\Gamma)$.
	In particular, $\dot{H}^\beta_{L_0}(\Gamma) \hookrightarrow H^\beta(\Gamma)$.
	Furthermore, $H^1(\Gamma) \hookrightarrow \widetilde{H}^1(\Gamma)$; therefore, 
	by \cite[Theorem~2.2, item (vi)]{chandlerwildeetal} it follows that
	$H^\beta(\Gamma) \hookrightarrow \widetilde{H}^\beta(\Gamma)$. 
	Next, let $1\leq\beta\leq 2$.  By Proposition~\ref{prp:Hdot1}, 
	$\dot{H}^1_{L_0}(\Gamma) \cong H^1(\Gamma)$, and by Proposition~\ref{prp:Hdot2},
	$\dot{H}^2_{L_0}(\Gamma) \cong H^2_{K}(\Gamma)$, where
	$H^2_K(\Gamma)$ is the set the right-hand side of \eqref{eq:Hdot2}. By \cite[Lemma 5]{BSW2022}, 
	$\dot{H}^\beta_{L_0}(\Gamma)\cong (H^1(\Gamma),H^2_K(\Gamma))_{\beta-1}$.
	We observe that $H^2_K(\Gamma) \hookrightarrow \widetilde{H}^2_C(\Gamma)$. Therefore,
	by \cite[Theorem 2.2, item (vi)]{chandlerwildeetal}, $\dot{H}^\beta_{L_0}(\Gamma) \hookrightarrow H^\beta(\Gamma)$.
	Finally, as $H^1(\Gamma)\hookrightarrow \widetilde{H}^1(\Gamma)$ and
	$H^2(\Gamma)\hookrightarrow \widetilde{H}^2(\Gamma)$, it again follows that,
	by \cite[Theorem 2.2, item (vi)]{chandlerwildeetal}, $H^\beta(\Gamma)\hookrightarrow \widetilde{H}^\beta(\Gamma)$.
\end{proof}

\begin{Corollary}\label{cor:Lboundedfrac}
	For $\beta \in (0,2)$, 
	the operators $L_0^{\nicefrac{\beta}{2}-1}:L_2(\Gamma)\to H^{2-\beta}(\Gamma)$ and
	$L_0^{\nicefrac{\beta}{2}-1}:L_2(\Gamma)\to \widetilde{H}^{2-\beta}(\Gamma)$
	are bounded and linear.
\end{Corollary}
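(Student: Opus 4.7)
The plan is to observe that this is essentially a short composition argument combining the spectral definition of the fractional powers $L_0^\gamma$ with the already-proved embeddings in Lemma~\ref{lem:simpleembeddings}.

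First I would set $s = 1-\beta/2$, so that $s \in (0,1)$ whenever $\beta \in (0,2)$, and note that the exponent we care about is $L_0^{-s}$. From the spectral definition of the fractional operator and of the scale $\dot{H}^{r}_{L_0}(\Gamma)$ (via the eigenbasis of $L_0$), and along the same lines used in the proof of Proposition~\ref{prp:existuniqdeter} (invoking \cite[Lemma~2.1]{BKK2020}), the map
\[
L_0^{-s} : L_2(\Gamma) \longrightarrow \dot{H}^{2s}_{L_0}(\Gamma) = \dot{H}^{2-\beta}_{L_0}(\Gamma)
\]
is an isometric isomorphism, and in particular a bounded linear operator.

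Next I would apply Lemma~\ref{lem:simpleembeddings} with parameter $2-\beta \in (0,2)$, which gives the continuous embeddings
\[
\dot{H}^{2-\beta}_{L_0}(\Gamma) \hookrightarrow H^{2-\beta}(\Gamma) \hookrightarrow \widetilde{H}^{2-\beta}(\Gamma).
\]
Composing $L_0^{-s}$ with either embedding produces the two desired bounded linear operators from $L_2(\Gamma)$ into $H^{2-\beta}(\Gamma)$ and into $\widetilde{H}^{2-\beta}(\Gamma)$, respectively.

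There is no genuine obstacle here; the only thing to be slightly careful about is making sure the exponent $2-\beta$ lies in the range covered by Lemma~\ref{lem:simpleembeddings} (namely $[0,2]$), which is immediate from $\beta\in(0,2)$, and that $L_0^{\beta/2-1}$ is being interpreted in the spectral sense on all of $L_2(\Gamma)$ (legitimate since the exponent $\beta/2-1$ is negative, so the operator is bounded and everywhere defined).
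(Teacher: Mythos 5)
Your argument is correct and matches the paper's proof essentially verbatim: both note that $L_0^{\nicefrac{\beta}{2}-1}:L_2(\Gamma)\to \dot{H}^{2-\beta}_{L_0}(\Gamma)$ is bounded (the paper states this directly, you justify it via the spectral calculus/isometric-isomorphism property) and then compose with the continuous embeddings $\dot{H}^{2-\beta}_{L_0}(\Gamma)\hookrightarrow H^{2-\beta}(\Gamma)\hookrightarrow \widetilde{H}^{2-\beta}(\Gamma)$ from Lemma~\ref{lem:simpleembeddings}. No issues.
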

\begin{proof}
	We directly have that
	$L_0^{\nicefrac{\beta}{2}-1}:L_2(\Gamma)\to \dot{H}_{L_0}^{2-\beta}(\Gamma)$ is a bounded
	linear operator for any $0 < \beta < 2$. By Lemma~\ref{lem:simpleembeddings},
	the two inclusions $i_1:\dot{H}^{2-\beta}_{L_0} \to H^{2-\beta}(\Gamma)$
	and ${i_2:\dot{H}^{2-\beta}_{L_0}(\Gamma) \to \widetilde{H}^{2-\beta}(\Gamma)}$,
	given by $i_1(x) = x$ and $i_2(x)=x$, are continuous. Therefore, the two operators 
	$i_1\circ L_0^{\nicefrac{\beta}{2}-1} = L_0^{\nicefrac{\beta}{2}-1}:L_2(\Gamma)\to H^{2-\beta}(\Gamma)$
	and $i_2\circ L_0^{\nicefrac{\beta}{2}-1} = L_0^{\nicefrac{\beta}{2}-1}:L_2(\Gamma)\to \widetilde{H}^{2-\beta}(\Gamma)$ are bounded and linear.
\end{proof}

We can now prove the characterization of $\dot{H}^\beta_{L_0}(\Gamma)$, for 
$1 < \beta <2$, $\beta\neq \nicefrac32$.

\begin{Theorem}\label{thm:charHdotFrac2}
	If $1 < \beta < \nicefrac32$, then 
	$\dot{H}^\beta_{L_0}(\Gamma) \cong H^{\beta}(\Gamma) \cong \widetilde{H}^\beta(\Gamma)\cap C(\Gamma),$
	where $H^{\beta}(\Gamma) = (H^1(\Gamma), \widetilde{H}_C^2(\Gamma))_{\beta-1}$.
	For $\nicefrac32 < \beta \leq 2$, we have that
	\begin{align*}
		\dot{H}^\beta_{L_0}(\Gamma) &\cong H^{\beta}(\Gamma)\cap \left\{u\in H^\beta(\Gamma): \forall v\in\mathcal{V},\,\sum_{e\in\mathcal{E}_v} \partial_e u(v) = 0\right\}\\
		&\cong \widetilde{H}^{\beta}(\Gamma)\cap \left\{u\in \widetilde{H}^\beta(\Gamma): 
		\hbox{$u$ is continuous and } \forall v\in\mathcal{V},\,\sum_{e\in\mathcal{E}_v} \partial_e u(v) = 0\right\}.
	\end{align*}
\end{Theorem}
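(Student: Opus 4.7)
The plan is to identify the real interpolation space $(\dot{H}^1_{L_0}(\Gamma), \dot{H}^2_{L_0}(\Gamma))_{\beta-1}$, which equals $\dot{H}^\beta_{L_0}(\Gamma)$ by \cite[Lemma~5]{BSW2022}, with the target spaces. Using Propositions~\ref{prp:Hdot1} and \ref{prp:Hdot2} this interpolation is isomorphic to $(H^1(\Gamma), H^2_K(\Gamma))_{\beta-1}$, where $H^2_K(\Gamma):=\widetilde{H}^2_C(\Gamma) \cap K^2(\Gamma)$. The theorem then decomposes into three sub-claims: (i)~$(H^1, \widetilde{H}^2_C)_{\beta-1} \cong \widetilde{H}^\beta(\Gamma) \cap C(\Gamma)$ for $1<\beta<2$; (ii)~$(H^1, H^2_K)_{\beta-1} = (H^1, \widetilde{H}^2_C)_{\beta-1}$ for $1<\beta<3/2$; and (iii)~$(H^1, H^2_K)_{\beta-1} = (H^1, \widetilde{H}^2_C)_{\beta-1} \cap K^\beta(\Gamma)$ for $3/2<\beta\leq 2$.

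For sub-claim (i) I would apply a retraction-coretraction argument using a vertex-averaging projection $P:\widetilde{H}^s(\Gamma)\to \widetilde{H}^s(\Gamma)\cap C(\Gamma)$ defined, for $s\in\{1,2\}$, by $Pu = u - \sum_{v\in\mathcal{V}}\sum_{e\in\mathcal{E}_v}(u|_e(v)-\overline{u(v)})\,\psi_v$, where $\psi_v$ is a fixed smooth cutoff at $v$ taking the value $1$ at $v$ on every incident edge and $\overline{u(v)} = d_v^{-1}\sum_{e\in\mathcal{E}_v} u|_e(v)$. Boundedness of $P$ on $\widetilde{H}^s$ for $s>1/2$ follows from Theorem~\ref{thm:traceTheorem}, and $P$ is the identity on $\widetilde{H}^s\cap C(\Gamma)$, so the standard retraction-coretraction result for real interpolation yields $(H^1,\widetilde{H}^2_C)_{\beta-1}=P((\widetilde{H}^1,\widetilde{H}^2)_{\beta-1})=P(\widetilde{H}^\beta(\Gamma))=\widetilde{H}^\beta(\Gamma)\cap C(\Gamma)$.

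For sub-claim (iii), the forward inclusion uses that for $\beta>3/2$ the Kirchhoff functional $T_K u := (\sum_{e\in\mathcal{E}_v}\partial_e u(v))_{v\in\mathcal V}$ extends continuously to $\widetilde{H}^\beta(\Gamma)\cap C(\Gamma)$ by Lemma~\ref{lem:boundedderiv} combined with Theorem~\ref{thm:traceTheorem}; since $T_K$ vanishes on $H^2_K$ and $H^2_K$ is dense in $\dot{H}^\beta_{L_0}(\Gamma)$, it vanishes on the entire interpolation space. For the reverse, given $u\in \widetilde{H}^\beta\cap C(\Gamma)\cap K^\beta(\Gamma)$, I would use the integration-by-parts formula \eqref{eq:genintbypartsfrac} in Lemma~\ref{lem:weakintbypartsLaplacian} to show that $L_0 u$ extends to a bounded linear functional on $H^{2-\beta}(\Gamma)\cong \dot{H}^{2-\beta}_{L_0}(\Gamma)$, i.e., $L_0 u\in \dot{H}^{\beta-2}_{L_0}(\Gamma)$, whence $u=L_0^{-1}(L_0 u)\in \dot{H}^\beta_{L_0}(\Gamma)$.

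Sub-claim (ii) is the main obstacle, because the natural projection $\widetilde{H}^2_C\to H^2_K$ killing the Kirchhoff sums is not bounded on $H^1(\Gamma)$ (the derivative has no trace there), ruling out a direct retraction-coretraction reduction. The remedy is a scale-adapted correction: given a near-optimal $K$-decomposition $u=u_0+u_1$ for $(H^1,\widetilde{H}^2_C)$ at parameter $t$, I would subtract from $u_1$ a bump $v_\delta$ supported at scale $\delta$ near each vertex with $v_\delta|_e(v)=0$ and $\partial_e v_\delta(v)=(T_K u_1)_v/d_v$, so that $u_1-v_\delta\in H^2_K$ and $u=(u_0+v_\delta)+(u_1-v_\delta)$ is a decomposition for $(H^1,H^2_K)$. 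Elementary scaling gives $\|v_\delta\|_{H^1}\lesssim \delta^{1/2}|T_K u_1|$ and $\|v_\delta\|_{\widetilde{H}^2}\lesssim \delta^{-1/2}|T_K u_1|$, and combined with the localized trace estimate $|T_K u_1|^2\lesssim \delta^{-1}\|u_1\|_{H^1}^2+\delta\|u_1\|_{\widetilde{H}^2}^2$, the balance $\delta\sim t$ yields a $K$-functional bound for $(H^1,H^2_K)$ whose excess over $K(t,u;H^1,\widetilde{H}^2_C)$ decays like $t^{1/2}$ on the codimension-$|\mathcal V|$ complement of $H^2_K$ in $\widetilde{H}^2_C$; this is precisely integrable against $t^{-2(\beta-1)-1}\,dt$ when $\beta<3/2$, the quantitative manifestation of the classical Grisvard threshold at which the Kirchhoff boundary condition ceases to persist in the fractional scale.
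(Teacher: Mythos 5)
Your overall architecture is genuinely different from the paper's. The paper never identifies $(H^1(\Gamma),H^2_K(\Gamma))_{\beta-1}$ directly as an interpolation space; it proves the two continuous inclusions separately, getting $\dot H^\beta_{L_0}(\Gamma)\hookrightarrow H^\beta(\Gamma)$ by interpolating embeddings and obtaining the hard converse inclusion by a duality argument in the spirit of Yagi: for $u$ in the target space and $g\in\dot H^2_{L_0}(\Gamma)$ one bounds $(u,L_0^{\nicefrac{\beta}{2}}g)$ via integration by parts, using that $L_0^{\nicefrac{\beta}{2}-1}g$ satisfies the Kirchhoff conditions and that the pairing $\langle g_e',f_e'\rangle_{H^{1-\beta}(e)\times H^{\beta-1}(e)}$ is admissible because $\beta-1<\nicefrac12$ (so $H^{\beta-1}(e)=H_0^{\beta-1}(e)$). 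Your sub-claim (i) (retraction--coretraction with the vertex-averaging projection; note the cutoffs must be indexed $\psi_{v,e}$ per incident edge, otherwise the correction sums to zero) and sub-claim (iii) are workable, and the reverse direction of (iii) is essentially the paper's duality argument in different clothing.

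The genuine gap is in sub-claim (ii). With $\delta\sim t$ your excess term is $(\delta^{\nicefrac12}+t\delta^{-\nicefrac12})\lvert T_Ku_1\rvert\sim t^{\nicefrac12}\lvert T_Ku_1(t)\rvert$, but your own trace estimate only gives $\lvert T_Ku_1(t)\rvert\lesssim t^{-\nicefrac12}\|u_1\|_{H^1(\Gamma)}+t^{\nicefrac12}\|u_1\|_{\widetilde H^2(\Gamma)}\lesssim t^{-\nicefrac12}\|u\|_{H^1(\Gamma)}$ for a near-optimal decomposition. Hence the excess is $O(\|u\|_{H^1(\Gamma)})$ \emph{uniformly in $t$}, not $O(t^{\nicefrac12})$, and $\int_0^1 t^{-2(\beta-1)}\|u\|_{H^1(\Gamma)}^2\,\nicefrac{dt}{t}$ diverges for every $\beta>1$; the claimed integrability threshold at $\beta=\nicefrac32$ never materializes from this estimate. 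The obstruction is that $u_1=u_1(t)$ changes with $t$ and its Kirchhoff defect can grow like $t^{-\nicefrac12}$. To close the gap you must localize: use $\lvert T_Ku_1\rvert^2\lesssim\delta^{-1}\|u_1'\|^2_{L_2(B_\delta(\mathcal V))}+\delta\|u_1''\|^2_{L_2(B_\delta(\mathcal V))}$, which reduces the needed bound to $\int_0^1 t^{-2(\beta-1)-1}\|u'\|^2_{L_2(B_t(\mathcal V))}\,dt\asymp\int_\Gamma\lvert u'(x)\rvert^2 d(x,\mathcal V)^{2-2\beta}\,dx<\infty$; this is a fractional Hardy inequality for $u'\in H^{\beta-1}$, valid precisely because $\beta-1<\nicefrac12$ (the same fact, $H^s=H^s_0$ for $s<\nicefrac12$, that powers the paper's duality route). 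Without this additional ingredient --- or without switching to the duality argument --- sub-claim (ii), and with it the whole case $1<\beta<\nicefrac32$, does not go through.
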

\begin{proof}
	Let $H^2_K(\Gamma)$ be the set given by the right-hand side of \eqref{eq:Hdot2} and
	observe that, by Propositions~\ref{prp:Hdot1} and \ref{prp:Hdot2}, we have  $\dot{H}_{L_0}^1(\Gamma) \cong H^1(\Gamma)$ and
	$\dot{H}_{L_0}^2(\Gamma)\cong H^2_K(\Gamma)$. Similarly, let $\Delta$ be the Kirchhoff Laplacian and observe that Propositions~\ref{prp:Hdot1} and \ref{prp:Hdot2}, in particular, imply
	that $\dot{H}_{-\Delta + I}^1(\Gamma) \cong H^1(\Gamma)$ and $\dot{H}_{-\Delta + I}^2(\Gamma)\cong H^2_K(\Gamma)$.
	Therefore, by \cite[Lemma 5]{BSW2022}, 
	$\dot{H}^\beta_{L_0}(\Gamma) \cong (H^1(\Gamma), H^2_K(\Gamma))_{\beta-1} \cong \dot{H}_{-\Delta+I}^\beta(\Gamma)$. 
	Therefore, it is enough to show the result for the operator $L_0 = -\Delta + I$.
	
	By Lemma~\ref{lem:simpleembeddings}, we have the continuous embedding $\dot{H}^\beta_{-\Delta+I}(\Gamma) \hookrightarrow H^\beta(\Gamma)$, for $1<\beta<2$.
	In particular, a constant $C_\beta>0$ exists such for every $u\in \dot{H}^\beta_{-\Delta+I}(\Gamma)$,
	\begin{equation}\label{eq:continjecthdot}
		\|u\|_{H^{\beta}(\Gamma)} \leq C_\beta \|(-\Delta+I)^\beta u\|_{L_2(\Gamma)} = C_\beta \|u\|_{\dot{H}^\beta_{-\Delta+I}(\Gamma)}.
	\end{equation}
	
	Again, by Lemma~\ref{lem:simpleembeddings}, we have
	$H^\beta(\Gamma) \hookrightarrow \widetilde{H}^\beta(\Gamma)$ for $1<\beta<2$. Moreover, $\dot{H}^\beta_{-\Delta +I}(\Gamma) \subset C(\Gamma)$ (indeed, for instance, 
	$\dot{H}^\beta_{-\Delta +I}(\Gamma)\subset H^\beta(\Gamma)$ and $H^\beta(\Gamma)\subset C(\Gamma)$. Alternatively, one could use Corollary~\ref{cor:sobembedHdot}).
	Therefore, 
	$\dot{H}_{-\Delta+I}^\beta(\Gamma) \hookrightarrow \widetilde{H}^\beta(\Gamma)\cap C(\Gamma)$,
	for $1<\beta<2$.
	Let us now show that if $1<\beta<\nicefrac32$, we also have the converse continuous inclusion 
	${H^\beta(\Gamma)\hookrightarrow\dot{H}^\beta_{-\Delta+I}(\Gamma)}$, and that 
	 $\widetilde{H}^\beta(\Gamma)\cap C(\Gamma) \hookrightarrow 
	\dot{H}_{-\Delta+I}^\beta(\Gamma)$. 
	Here, we adapt
	the approach of \cite[Section 4]{yagiHinfinityfunctional}.
	To this end, let ${1<\beta < \nicefrac32}$, and write $L_0$ instead of $I-\Delta$ to keep the notation simple. Furthermore, the following inequality will also be used several times for this proof, and also in the remaining of this section. Let $0<s<\nicefrac{1}{2}$, $f\in H^s(\Gamma)$ and $g\in H^{-s}(\Gamma)$, then
	\begin{equation}\label{eq:ineq_fracSov_CharSob}
		\lvert(f, g)\rvert \leq 
		\sum_{e\in\mathcal{E}} \lvert (f, g)_{L_2(e)}\rvert = 
		\sum_{e\in\mathcal{E}} \lvert \langle f, g\rangle_{H^{s}(e)\times H^{-s}(e)}\rvert
		\leq \sum_{e\in\mathcal{E}} 
		\|f\|_{H^{s}(e)} \|g\|_{H^{-s}(e)}.
	\end{equation}
	
	Now, observe that $L_0^{\nicefrac{\beta}{2}-1}$ maps $\dot{H}^2_{L_0}(\Gamma)$ onto
	$\dot{H}^{4-\beta}_{L_0}(\Gamma) \subset \dot{H}^2_{L_0}(\Gamma)$.
	Thus, for any ${g\in \dot{H}^2_{L_0}(\Gamma)}$, we
	know that $L_0^{\nicefrac{\beta}{2}-1}g \in \dot{H}^2_{L_0}(\Gamma)$. In particular,
	from Proposition~\ref{prp:Hdot2} it follows that $L_0^{\nicefrac{\beta}{2}-1}g$
	satisfies the Kirchhoff vertex conditions. Thus, for $u\in H^\beta(\Gamma)$ and $g\in \dot{H}^2_{L_0}(\Gamma)$, we can apply integration by parts with the fact that $L_0^{\nicefrac{\beta}{2}-1}g$ satisfies the Kirchhoff's vertex conditions to obtain the following:
	\begin{align}
		(u, L_0^{\nicefrac{\beta}{2}} g) &= %(u, L_0 L_0^{\nicefrac{\beta}{2}-1}g) = 
		(u, (I-\Delta)L_0^{\nicefrac{\beta}{2}-1}g)= (u,L_0^{\nicefrac{\beta}{2}-1}g) + (\nabla u, \nabla L_0^{\nicefrac{\beta}{2}-1}g).\label{eq:expression_char_Lbeta_1}
	\end{align}
	Let us now address the second term on the right-hand side of the above expression.
	By Corollary~\ref{cor:Lboundedfrac}, $L_0^{\nicefrac{\beta}{2}-1}$ is a bounded 
	operator from $L_2(\Gamma)$ to $H^{2-\beta}(\Gamma)$.
	By Lemma~\ref{lemma:wellknown}, Eq.~\eqref{eq:ineq_fracSov_CharSob}, 
	Lemma \ref{lem:restredgebounded}, and the fact that $0 < \beta - 1<\nicefrac12$,
	\begin{align*}
		\lvert(\nabla u, \nabla L_0^{\nicefrac{\beta}{2}-1} g)\rvert &\leq 
		 \sum_{e\in\mathcal{E}} 
		\|u_e'\|_{H^{\beta-1}(e)} \|D L_0^{\nicefrac{\beta}{2}-1} g\|_{H^{1-\beta}(e)} \\
		&\lesssim \sum_{e\in\mathcal{E}} \|u\|_{H^{\beta}(e)} \|L_0^{\nicefrac{\beta}{2}-1} g\|_{H^{2-\beta}(e)}\\
		&\lesssim \lvert\mathcal{E}\rvert 
		\|u\|_{H^{\beta}(\Gamma)} \|L_0^{\nicefrac{\beta}{2}-1} g\|_{H^{2-\beta}(\Gamma)} \lesssim \lvert\mathcal{E}\rvert \|u\|_{H^\beta(\Gamma)} \|g\|_{L_2(\Gamma)}.
	\end{align*}
	
	By a similar argument, and using that for every edge $e\in\mathcal{E}$, ${H^\beta(e)\hookrightarrow H^{\beta-1}(e)}$ and
	${H^{2-\beta}(e) \hookrightarrow H^{1-\beta}(e)}$, we obtain that, for $u \in H^{\beta}(\Gamma)$ and $g\in \dot{H}^2_{L_0}(\Gamma)$,
	\begin{align*}
		\lvert (u, L_0^{\nicefrac{\beta}{2}-1} g)\rvert  
		\lesssim \lvert\mathcal{E}\rvert  \|u\|_{H^\beta(\Gamma)} \|g\|_{L_2(\Gamma)},	
	\end{align*}
	From \eqref{eq:expression_char_Lbeta_1} and the above
	estimates, we have that
		$\lvert (u, L_0^{\nicefrac{\beta}{2}}g)\rvert \lesssim \lvert\mathcal{E}\rvert
		\|u\|_{H^\beta(\Gamma)}\|g\|_{L_2(\Gamma)}$.
	Since $g\in \dot{H}^2_{L_0}(\Gamma)$ is arbitrary and 
	$\dot{H}^2_{L_0}(\Gamma)$ is dense in $L_2(\Gamma)$,
	${u\in D(L_0^{\nicefrac{\beta}{2}}) = \dot{H}^\beta_{L_0}(\Gamma)}$ and 
	$
	\|u\|_{\dot{H}_{L_0}^\beta(\Gamma)} = \|L_0^{\nicefrac{\beta}{2}} u\|_{L_2(\Gamma)} \lesssim \lvert\mathcal{E}\rvert \|u\|_{H^\beta(\Gamma)}.
	$
	This proves the continuous inclusion $H^\beta(\Gamma)\hookrightarrow\dot{H}^\beta_{L_0}(\Gamma)$ for $1<\beta < \nicefrac32$. By repeating the same proof,
	but taking ${u\in \widetilde{H}^\beta(\Gamma)\cap C(\Gamma)}$ instead of
	$u\in H^\beta(\Gamma)$, and because, by Lemma~\ref{lem:restredgebounded}, the restriction map from $\widetilde{H}^\beta(\Gamma)$
	to $H^\beta(e)$ for any edge $e\in\mathcal{E}$ is a bounded operator, we obtain
	the continuous inclusion $\widetilde{H}^\beta(\Gamma)\cap C(\Gamma)\hookrightarrow
	\dot{H}^\beta_{L_0}(\Gamma)$ for $1<\beta < \nicefrac32$.
	This completes the characterization for $1<\beta<\nicefrac32.$
	
	To prove the characterization for $\nicefrac32 < \beta < 2$, we introduce 
	$$H^\beta_K(\Gamma) = H^{\beta}(\Gamma)\cap \left\{u\in \widetilde{H}^\beta(\Gamma): \forall v\in\mathcal{V},\,\sum_{e\in\mathcal{E}_v} \partial_e u(v) = 0\right\},$$
	endowed with the norm inherited from $H^\beta(\Gamma)$.
	We begin by proving the continuous inclusion
	$\dot{H}^\beta_{L_0}(\Gamma) \hookrightarrow H^\beta_K(\Gamma)$. 
	We recall that, in the first part of the proof, we found that 
	$\|u\|_{H^{\beta}(\Gamma)} \leq C_\beta \|u\|_{\dot{H}^\beta_{L_0}(\Gamma)}$ 
	and $\dot{H}_{L_0}^\beta(\Gamma) \subset H^\beta(\Gamma)$. We
	must validate that $\dot{H}^\beta_{L_0}(\Gamma) \subset H^\beta_K(\Gamma)$. Given the previous observation, we must demonstrate that the functions in 
	$\dot{H}^\beta_{L_0}(\Gamma)$
	are continuous and satisfy the Kirchhoff vertex conditions. 
	In the first part of the proof, we found that
	$\dot{H}^\beta_{L_0}(\Gamma) \cong (H^1(\Gamma), H^2_K(\Gamma))_{\beta-1}\subset H^1(\Gamma)$, so all functions in 
	$\dot{H}^\beta_{L_0}(\Gamma)$ are continuous.
	
	To demonstrate that any function in $\dot{H}_{L_0}^\beta(\Gamma)$ satisfies the 
	Kirchhoff vertex conditions, we let $u\in \dot{H}_{L_0}^\beta(\Gamma)$. 
	We observe that $\dot{H}^2_{L_0}(\Gamma)$ 
	is dense in $\dot{H}^\beta_{L_0}(\Gamma)$. Furthermore, by Proposition~\ref{prp:Hdot2},
	$\dot{H}^2_{L_0}(\Gamma) \cong H^2_K(\Gamma)$.
	Thus, a sequence $u_k\in H^2_K(\Gamma)$ exists such that
	$u_k\to u$ as $k\to \infty$ in $\dot{H}^\beta_{L_0}(\Gamma)$. Because 
	$u_k \in H^2_K(\Gamma)$, it satisfies the Kirchhoff vertex conditions. Therefore, in particular,
	for each $k\in\mathbb{N}$ and each $v\in\mathcal{V}$, 
	$\sum_{e\in \mathcal{E}_v} \partial_e u_k(v) = 0$. 
Further, because $\nicefrac32 < \beta < 2$, Lemma~\ref{lem:boundedderiv}
	shows that $\nabla u \in H^{\beta-1}(\Gamma)$. Furthermore,
	we note that $\nicefrac12 < \beta-1$.
	Therefore, we can apply Theorem~\ref{thm:traceTheorem} to $\nabla u$
	and find that 
	\begin{align*}
		\lvert \sum_{e\in\mathcal{E}_v} \partial_e u(v) \rvert &= \lvert \sum_{e\in\mathcal{E}_v} \partial_e u(v) - 
		\partial_e u_k(v) \rvert\leq \sum_{e\in\mathcal{E}} 
		\big(\lvert\nabla (u - u_k) \lvert_e(0)\rvert + \lvert\nabla (u-u_k)\lvert_e(l_e)\rvert\big)\\
		&\leq \|\gamma(\nabla u - \nabla u_k) \|_{\mathbb{R}^{\sum_{v\in\mathcal{V}}d_v}} 
		\lesssim \|\nabla (u - u_k)\|_{H^{\beta-1}(\Gamma)}\\
		&\lesssim  \|u - u_k\|_{H^\beta(\Gamma)}
		\lesssim  \|u-u_k\|_{\dot{H}_{-\Delta+I}^\beta(\Gamma)} \rightarrow 0,\quad \hbox{as $k\to\infty$,}
	\end{align*}
	because $u_k \to u$ in $\dot{H}^\beta_{L_0}(\Gamma)$, where we apply the
	fact that $\nabla$ is a bounded operator from $H^\beta(\Gamma)$ to
	$H^{\beta-1}(\Gamma)$ and inequality \eqref{eq:continjecthdot}. 
	Thus,  $\dot{H}^\beta_{L_0}(\Gamma) \hookrightarrow H^\beta_K(\Gamma)$.
	
	It remains to 
	prove the converse continuous inclusion $H^\beta_K(\Gamma)\hookrightarrow \dot{H}^\beta_{L_0}(\Gamma)$, for $\nicefrac32<\beta<2$.  Let $u\in H^\beta_K(\Gamma)$ and $g\in \dot{H}^2_{L_0}(\Gamma)$ and
	observe that \eqref{eq:expression_char_Lbeta_1} applies to this case and
	that ${L_0^{\nicefrac{\beta}{2}-1}g \in 
		\dot{H}_{L_0}^{4-\beta}(\Gamma)\subset \dot{H}^2_{L_0}(\Gamma)}$.
	Therefore, $L_0^{\nicefrac{\beta}{2}-1}g$ is continuous,
	and because $u$ satisfies the Kirchhoff vertex conditions, 
	we can apply Lemma~\ref{lem:weakintbypartsLaplacian}.
	More precisely, we can use \eqref{eq:genintbypartsfrac}.
	We combine this with Lemma~\ref{lemma:wellknown}, 
	Lemma~\ref{lem:restredgebounded}, the fact that for any $e\in\mathcal{E}$, 
	$\Delta:H^\beta(e)\to H^{\beta-2}(e)$ is a bounded operator, and that, by Corollary~\ref{cor:Lboundedfrac}, $L_0^{\nicefrac{\beta}{2}-1}:L_2(\Gamma)\to H^{2-\beta}(\Gamma)$ is bounded, to obtain
	\begin{align*}
		\lvert (\nabla u, \nabla &L_0^{\nicefrac{\beta}{2}-1}g)\rvert = \left\lvert 
		\sum_{e\in \mathcal{E}} \langle L_0^{\nicefrac{\beta}{2}-1} g,
		\Delta u\rangle_{H^{2-\beta}(e)\times H^{\beta-2}(e)}\right\rvert \\
		&\leq \sum_{e\in\mathcal{E}} \|L_0^{\nicefrac{\beta}{2}-1} g\|_{H^{2-\beta}(e)} \|\Delta u\|_{H^{\beta-2}(e)}
		\leq \sum_{e\in\mathcal{E}} \|L_0^{\nicefrac{\beta}{2}-1}g\|_{H^{2-\beta}(e)} \|u\|_{H^\beta(e)}\\ 
		&\lesssim \lvert\mathcal{E}\rvert \|L_0^{\nicefrac{\beta}{2}-1} g\|_{H^{2-\beta}(\Gamma)} \|u\|_{H^\beta(\Gamma)}
		\lesssim \lvert\mathcal{E}\rvert \|g\|_{L_2(\Gamma)} \|u\|_{H^\beta(\Gamma)}.
	\end{align*}
	
	To address the other term in \eqref{eq:expression_char_Lbeta_1}, we recall that for every $e\in\mathcal{E}$, we have $H^\beta(e)\hookrightarrow H^{\beta-2}(e)$.
	Moreover, $0 < 2-\beta < \nicefrac12$. 
	Thus, we proceed in a similar manner to obtain 
	$\lvert (u, L_0^{\nicefrac{\beta}{2}-1} g)\rvert 
	\lesssim \lvert\mathcal{E}\rvert  \|u\|_{H^\beta(\Gamma)} \|g\|_{L_2(\Gamma)}$
	for ${u \in H^{\beta}(\Gamma)}$ and ${g\in \dot{H}^2_{L_0}(\Gamma)}$.
	From \eqref{eq:expression_char_Lbeta_1} and the above
	estimates, for any $u\in H^\beta_K(\Gamma)$ and any $w\in 
	\dot{H}^2_{L_0}(\Gamma)$,
	$
	\lvert (u, L_0^{\nicefrac{\beta}{2}}w) \rvert \lesssim  \lvert\mathcal{E}\rvert\|u\|_{H^\beta(\Gamma)}\|w\|_{L_0^2(\Gamma)}.
	$
	Therefore, as $g\in\dot{H}^2_{L_0}(\Gamma)$ is arbitrary and 
	$\dot{H}^2_{L_0}(\Gamma)$ is dense in $L_0^2(\Gamma)$, 
	${u\in D(L_0^{\nicefrac{\beta}{2}}) = \dot{H}^\beta_{L_0}(\Gamma)}$ and
	$$
	{\|u\|_{\dot{H}^\beta_{L_0}(\Gamma)} = \|L_0^{\nicefrac{\beta}{2}}u\|_{L_2(\Gamma)} 
	\lesssim \lvert\mathcal{E}\rvert \|u\|_{H^\beta(\Gamma)}}.
$$
	That is,  $H^\beta_K(\Gamma) \hookrightarrow 
	\dot{H}^\beta_{L_0}(\Gamma)$ for $\nicefrac32<\beta<2$. 
	Now, let
	$$\widetilde{H}^\beta_K(\Gamma) = \widetilde{H}^{\beta}(\Gamma)\cap \left\{u\in \widetilde{H}^\beta(\Gamma): \hbox{$u$ is continuous and } \forall v\in\mathcal{V},\,\sum_{e\in\mathcal{E}_v} \partial_e u(v) = 0\right\}.$$
	Similarly to the case $1<\beta<\nicefrac32$, we can repeat the same proof, but take
	$u\in \widetilde{H}_K^\beta(\Gamma)$ and replace $H^\beta_K(\Gamma)$
	with $\widetilde{H}_K^\beta(\Gamma)$ to obtain the continuous inclusion
	${\widetilde{H}^\beta_K(\Gamma) \hookrightarrow 
		\dot{H}^\beta_{L_0}(\Gamma)}$
	for $\nicefrac32<\beta<2$. 
\end{proof}

\begin{Remark}
	The above strategy cannot be adapted for generalized Kirchhoff vertex conditions. Indeed, if we try to use \eqref{eq:traceineqintb} in the proof of Theorem~\ref{thm:charHdotFrac2} for the case $\nicefrac32<\beta<2$,  we would obtain
	$(\nabla u, \nabla L_0^{\nicefrac{\beta}{2}-1}g) \leq C\|u\|_{H^\beta(\Gamma)} \|g\|_{H^{2\beta-3}(\Gamma)}.$ Because $\nicefrac32 < \beta < 2$, we have $0 < 2\beta -3$; therefore, it would not be possible to obtain the  bound
	$
	\lvert (\nabla u, \nabla L_0^{\nicefrac{\beta}{2}-1}g) \rvert \leq C\|u\|_{H^\beta(\Gamma)} \|g\|_{L_2(\Gamma)}.
	$
	Hence, this term must be removed, and we can do so under the Kirchhoff vertex conditions because the term then vanishes since the functions in the corresponding domain satisfy the Kirchhoff vertex conditions. The term also vanishes if one considers Dirichlet or Neumann vertex conditions (which completely decouple the metric graph), but for generalized Kirchhoff vertex conditions,
	the term does not vanish.
\end{Remark}

In the following result, we address the remaining characterizations
when ${\beta \in [0,1]}$ (recall that $\dot{H}^\beta_{L_0}(\Gamma) \cong H^\beta(\Gamma)$ was shown in Proposition~\ref{prp:Hdot1} for this case).

\begin{Theorem}\label{thm:charbeta01remaining}
	If $0\leq\beta\leq 1$, where $\beta\neq \nicefrac12$. Then, we have the characterization
	\begin{equation*}
		\dot{H}^\beta_{L_0}(\Gamma) \cong
		\begin{cases}
			\widetilde{H}^\beta(\Gamma) &
			\text{for $0\leq\beta<\nicefrac12$} \\
			\widetilde{H}^\beta(\Gamma)\cap C(\Gamma) & 
			\text{for $\nicefrac12<\beta\leq 1$.}
		\end{cases}
	\end{equation*}
\end{Theorem}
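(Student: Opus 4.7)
The plan is to reduce to $L_0 = -\Delta + I$ (as in the proof of Theorem~\ref{thm:charHdotFrac2}, via \cite[Lemma 5]{BSW2022} combined with the already established identifications $\dot{H}^0_{L_0}(\Gamma)\cong L_2(\Gamma)$ and $\dot{H}^1_{L_0}(\Gamma)\cong H^1(\Gamma)$) and then adapt the duality strategy of Theorem~\ref{thm:charHdotFrac2} to the range $0<\beta<1$, $\beta\neq 1/2$. The endpoints $\beta\in\{0,1\}$ follow directly from Proposition~\ref{prp:Hdot1}, noting that $H^1(\Gamma)=\widetilde{H}^1(\Gamma)\cap C(\Gamma)$ by definition. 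The continuous inclusion $\dot{H}^\beta_{L_0}(\Gamma)\hookrightarrow\widetilde{H}^\beta(\Gamma)$ is supplied by Lemma~\ref{lem:simpleembeddings}, and Corollary~\ref{cor:sobembedHdot} yields the additional embedding $\dot{H}^\beta_{L_0}(\Gamma)\hookrightarrow C(\Gamma)$ whenever $\beta>1/2$, so only the converse inclusions in each of the two open ranges remain.

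For both ranges I would use the following common template: take $u$ in the candidate target space and $g\in \dot{H}^2_{L_0}(\Gamma)$, set $w=L_0^{\beta/2-1}g$---which by Proposition~\ref{prp:Hdot2} belongs to $\dot{H}^{4-\beta}_{L_0}(\Gamma)\subset\dot{H}^2_{L_0}(\Gamma)$, hence lies in $\widetilde{H}^2(\Gamma)\cap C(\Gamma)$ and satisfies the Kirchhoff conditions---and split
\[
(u,L_0^{\beta/2}g) \;=\; (u,w)\;+\;(u,-\Delta w).
\]
The term $(u,w)$ is immediately dominated by $\|u\|_{\widetilde{H}^\beta(\Gamma)}\|g\|_{L_2(\Gamma)}$ via Cauchy--Schwarz and the $L_2$-boundedness of $L_0^{\beta/2-1}$. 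Once the Laplacian term is likewise bounded by $\|u\|_{\widetilde{H}^\beta(\Gamma)}\|g\|_{L_2(\Gamma)}$, density of $\dot{H}^2_{L_0}(\Gamma)$ in $L_2(\Gamma)$ and self-adjointness of $L_0^{\beta/2}$ will place $u$ in $\dot{H}^\beta_{L_0}(\Gamma)$ with the required norm estimate, exactly as at the end of the proof of Theorem~\ref{thm:charHdotFrac2}.

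For $1/2<\beta<1$, continuity of $u$ at each vertex together with the Kirchhoff property of $w$ activates the clean identity \eqref{eq:genintbypartsfrac_case2} in Lemma~\ref{lem:weakintbyparts}, giving $(u,-\Delta w)=\sum_{e\in\mathcal{E}}\langle w_e',u_e'\rangle_{H^{1-\beta}(e)\times H^{\beta-1}(e)}$; an edgewise estimate exploiting boundedness of $D:H^s(e)\to H^{s-1}(e)$, Cauchy--Schwarz across edges, and Corollary~\ref{cor:Lboundedfrac} (so that $\|w\|_{\widetilde{H}^{2-\beta}(\Gamma)}\lesssim \|g\|_{L_2(\Gamma)}$) then produces the required inequality. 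The range $0<\beta<1/2$ is where the main obstacle lies, since traces at vertices are no longer defined on $\widetilde{H}^\beta(\Gamma)$ and Lemma~\ref{lem:weakintbyparts} is unavailable. The workaround will be to invoke Lemma~\ref{lemma:wellknown}, which simultaneously gives $H^\beta_0(e)=H^\beta(e)$ and identifies the $L_2(e)$ pairing with the $H^\beta_0(e)\times H^{-\beta}(e)$ duality pairing. This lets me view $-\Delta w_e\in L_2(e)$ directly as an element of $H^{-\beta}(e)$ whose dual norm coincides (by density of $C_c^\infty(e)$ in $H_0^\beta(e)$) with that of the distributional derivative $w_e''$, and hence is bounded by $\|w_e\|_{H^{2-\beta}(e)}$ via the boundedness of $D^2:H^{2-\beta}(e)\to H^{-\beta}(e)$. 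Summing edgewise with Cauchy--Schwarz and applying Corollary~\ref{cor:Lboundedfrac} closes the argument without ever invoking a vertex trace, completing the characterization.
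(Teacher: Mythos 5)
Your proposal is correct and follows essentially the same route as the paper's proof: the same reduction to $L_0=-\Delta+I$, the same decomposition $(u,L_0^{\nicefrac{\beta}{2}}g)=(u,w)-(u,\Delta w)$ with $w=L_0^{\nicefrac{\beta}{2}-1}g$, the integration-by-parts identity \eqref{eq:genintbypartsfrac_case2} for $\nicefrac12<\beta\leq 1$, and the duality pairing from Lemma~\ref{lemma:wellknown} combined with Corollary~\ref{cor:Lboundedfrac} for $0<\beta<\nicefrac12$, concluding by density of $\dot{H}^2_{L_0}(\Gamma)$ in $L_2(\Gamma)$.
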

\begin{proof}
	First,  by Lemma~\ref{lem:simpleembeddings},
	$\dot{H}_{L_0}^\beta(\Gamma)\hookrightarrow \widetilde{H}^\beta(\Gamma)$, for
	$0<\beta<1$. If $\nicefrac12 < \beta < 1$, by Corollary~\ref{cor:sobembedHdot}, it follows that $\dot{H}_{L_0}^\beta(\Gamma) \subset C(\Gamma)$. Therefore, 
	$\dot{H}_{L_0}^\beta(\Gamma) \hookrightarrow \widetilde{H}^\beta(\Gamma)\cap C(\Gamma)$, for $\nicefrac12 < \beta\leq 1$.
	We must prove the converse continuous inclusions, so we proceed as in Theorem~\ref{thm:charHdotFrac2}. By the same arguments as in 
	Theorem~\ref{thm:charHdotFrac2}, it is sufficient to derive the result for ${L_0=-\Delta + I}$. 
	We first address the case $0<\beta<\nicefrac12$, where we let $u\in \widetilde{H}^\beta(\Gamma)$ and $g\in\dot{H}^2_{L_0}(\Gamma)$. We find that
	\begin{equation}\label{eq:identityadjointLchar}
		(u, L_0^{\nicefrac{\beta}{2}} g) = (u, L_0 L_0^{\nicefrac{\beta}{2}-1}g) = (u, L_0^{\nicefrac{\beta}{2}-1}g) - (u, \Delta L_0^{\nicefrac{\beta}{2}-1}g).
	\end{equation}
	Then, we focus on the second term on the right-hand side of the above expression. For every $e\in\mathcal{E}$, $\Delta : H^{2-\beta}(e) \to H^{-\beta}(e)$ is a bounded
	operator and, by Corollary~\ref{cor:Lboundedfrac}, 
	$L_0^{\nicefrac{\beta}{2}-1}:L_2(\Gamma)\to \widetilde{H}^{2-\beta}(\Gamma)$
	is a bounded operator. Moreover, by Lemma~\ref{lem:restredgebounded}, the restriction
	operator to each edge $e\in\mathcal{E}$ is a bounded operator. Therefore,
	for any $u\in \widetilde{H}^\beta(\Gamma)$ and any $g\in\dot{H}^2_{L_0}(\Gamma)$,
	by Lemma~\ref{lemma:wellknown} and \eqref{eq:ineq_fracSov_CharSob}, 
	\begin{align*}
		\lvert (u, \Delta L_0^{\nicefrac{\beta}{2}-1} g)  \rvert 
		&\leq \sum_{e\in\mathcal{E}} \|u\|_{H^\beta(e)} \|\Delta L_0^{\nicefrac{\beta}{2}-1} g\|_{H^{-\beta}(e)}
		\lesssim  \lvert \mathcal{E}\rvert \|u\|_{H^\beta(e)} \|L_0^{\nicefrac{\beta}{2}-1}g\|_{H^{2-\beta}(e)}\\
		&\lesssim \lvert \mathcal{E}\rvert\|u\|_{\widetilde{H}^\beta(\Gamma)} 
		\|L_0^{\nicefrac{\beta}{2}-1}g\|_{\widetilde{H}^{2-\beta}(\Gamma)}
		\lesssim \lvert 
		\mathcal{E}\rvert\|u\|_{\widetilde{H}^\beta(\Gamma)} \|g\|_{L_2(\Gamma)}. 
	\end{align*}
	Now, note that for any edge $e\in\mathcal{E}$, $H^{2-\beta}(e) \hookrightarrow H^{-\beta}(e)$. 
	Thus, we obtain, similarly,
	\begin{align*}
		\lvert (u, L_0^{\nicefrac{\beta}{2}-1} g)\rvert 
		\lesssim \lvert\mathcal{E}\rvert  \|u\|_{\widetilde{H}^\beta(\Gamma)} \|g\|_{L_2(\Gamma)}.	
	\end{align*}
	From \eqref{eq:identityadjointLchar} and the above
	estimates, we find that
		$\lvert (u, L_0^{\nicefrac{\beta}{2}}g)\rvert \lesssim \lvert\mathcal{E}\rvert
		\|u\|_{\widetilde{H}^\beta(\Gamma)}\|g\|_{L_2(\Gamma)}$.
	Since $g\in \dot{H}^2_{L_0}(\Gamma)$ is arbitrary and 
	$\dot{H}^2_{L_0}(\Gamma)$ is dense in $L_2(\Gamma)$,
	$u\in D(L_0^{\nicefrac{\beta}{2}}) = \dot{H}^\beta_{L_0}(\Gamma)$ and 
	$
	\|u\|_{\dot{H}_{L_0}^\beta(\Gamma)} = \|L_0^{\nicefrac{\beta}{2}} u\|_{L_2(\Gamma)} \lesssim
	\lvert\mathcal{E}\rvert \|u\|_{\widetilde{H}^\beta(\Gamma)}.
	$
	This statement proves the continuous inclusion $\widetilde{H}^\beta(\Gamma)\hookrightarrow\dot{H}^\beta_{L_0}(\Gamma)$ for $0<\beta < \nicefrac12$.
	
	It remains to show that  $\widetilde{H}^\beta(\Gamma)\cap C(\Gamma) \hookrightarrow \dot{H}_{L_0}^\beta(\Gamma),$
	for $\nicefrac12 < \beta\leq 1$.
	First, we observe that \eqref{eq:identityadjointLchar} also holds for $\nicefrac12 <\beta <1$. Then, we estimate both terms on the
	right-hand side of \eqref{eq:identityadjointLchar}. To this end,
	take any $u\in \widetilde{H}^\beta(\Gamma)\cap C(\Gamma)$ and
	any $g\in\dot{H}^2_{L_0}(\Gamma)$. Because ${L_0^{\nicefrac{\beta}{2}-1}g \in 
		\dot{H}_{L_0}^{4-\beta}(\Gamma)\subset \dot{H}^2_{L_0}(\Gamma)}$, 
	$L_0^{\nicefrac{\beta}{2}-1}g$ satisfies the Kirchhoff vertex conditions,
	and because $u$ is continuous, we can apply \eqref{eq:genintbypartsfrac_case2}.
	Further, we recall that, for each $e\in\mathcal{E}$ and
	any $s\in\mathbb{R}$, $D:H^s(e)\to H^{s-1}(e)$ is a bounded operator, and by
	Lemma~\ref{lem:restredgebounded}, the restrictions to edges are
	bounded. Finally, by Corollary~\ref{cor:Lboundedfrac},
	$L_0^{\nicefrac{\beta}{2}-1}$ is a bounded operator from 
	$L_2(\Gamma)$ to $\widetilde{H}^{2-\beta}(\Gamma)$. Therefore, 
	\begin{align*}
		\lvert (u, \Delta &L_0^{\nicefrac{\beta}{2}-1}g)\rvert = \left\lvert \sum_{e\in\mathcal{E}} 
		\langle D L_0^{\nicefrac{\beta}{2}-1}g, u_e' \rangle_{H^{1-\beta}(e)\times H^{\beta-1}(e)} \right\rvert\\
		&\leq \sum_{e\in\mathcal{E}} \|D L_0^{\nicefrac{\beta}{2}-1} g\|_{H^{1-\beta}(e)} \|u_e'\|_{H^{\beta-1}(e)}
		\lesssim  \sum_{e\in\mathcal{E}} \|L_0^{\nicefrac{\beta}{2}-1}g\|_{H^{2-\beta}(e)} \|u\|_{H^\beta(e)}\\
		&\lesssim  \lvert \mathcal{E}\rvert \|L_0^{\nicefrac{\beta}{2}-1}g\|_{\widetilde{H}^{2-\beta}(\Gamma)} \|u\|_{\widetilde{H}^\beta(\Gamma)}
		\lesssim \lvert \mathcal{E}\rvert \|u\|_{\widetilde{H}^\beta(\Gamma)}\|g\|_{L_2(\Gamma)}. 
	\end{align*}
	Note that for any edge $e\in\mathcal{E}$, we have
	$H^{2-\beta}(e) \hookrightarrow H^{1-\beta}(e)$ and $H^{\beta}(e) \hookrightarrow H^{\beta-1}(e)$. 
	Thus, by similar calculations, 
		$\lvert (u, L_0^{\nicefrac{\beta}{2}-1} g)\rvert 
		\lesssim \lvert\mathcal{E}\rvert  \|u\|_{\widetilde{H}^\beta(\Gamma)} \|g\|_{L_2(\Gamma)}$.	
	From \eqref{eq:identityadjointLchar} and the above
	estimates,  
		$\lvert (u, L_0^{\nicefrac{\beta}{2}}g)\rvert \lesssim \lvert\mathcal{E}\rvert
		\|u\|_{\widetilde{H}^\beta(\Gamma)}\|g\|_{L_2(\Gamma)}$.
	Because $g\in \dot{H}^2_{L_0}(\Gamma)$ is arbitrary and 
	$\dot{H}^2_{L_0}(\Gamma)$ is dense in $L_2(\Gamma)$, we have 
	$u\in D(L_0^{\nicefrac{\beta}{2}}) = \dot{H}^\beta_{L_0}(\Gamma)$ and 
	$
	\|u\|_{\dot{H}_{L_0}^\beta(\Gamma)} = \|L_0^{\nicefrac{\beta}{2}} u\|_{L_2(\Gamma)} \leq 
	\lvert\mathcal{E}\rvert (\widetilde{C}+\widehat{C}) \|u\|_{\widetilde{H}^\beta(\Gamma)}.
	$
	This  proves the continuous inclusion 
	$\widetilde{H}^\beta(\Gamma)\cap C(\Gamma)\hookrightarrow\dot{H}^\beta_{L_0}(\Gamma)$ 
	for $\nicefrac12<\beta < 1$ and concludes the proof.
\end{proof}

Finally, we can summarize the above results into a proof of Theorem~\ref{thm:characterization}.

\begin{proof}[Proof of Theorem~\ref{thm:characterization}]
	By Proposition~\ref{prp:Hdot1} and Theorem~\ref{thm:charbeta01remaining}, for $0 < \beta < \nicefrac12$,
	$
	\dot{H}^\beta_{L_0}(\Gamma) \cong H^\beta(\Gamma) \cong \widetilde{H}^\beta(\Gamma)
	$
	and for $\nicefrac12 < \beta \leq 1$,
	$\dot{H}^\beta_{L_0}(\Gamma) \cong H^\beta(\Gamma) \cong \widetilde{H}^\beta(\Gamma)\cap C(\Gamma).$
	By Theorem~\ref{thm:charHdotFrac2}, for $1<\beta<\nicefrac32$,
	$\dot{H}^\beta_{L_0}(\Gamma) \cong H^\beta(\Gamma) \cong \widetilde{H}(\Gamma)\cap C(\Gamma),$
	and for $\nicefrac32<\beta\leq 2$, 
	$\dot{H}^\beta_{L_0}(\Gamma) \cong H^\beta(\Gamma)\cap K^\beta(\Gamma) \cong \widetilde{H}^\beta(\Gamma)\cap C(\Gamma)\cap K^\beta(\Gamma).$
	The identification $\dot{H}^{\nicefrac{1}{2}}(\Gamma) \cong H^{\nicefrac{1}{2}}(\Gamma)$ follows from Proposition \ref{prp:Hdot1}.
	Finally, we let $\beta>\nicefrac12$ and define $\widetilde{\beta} = \beta - \nicefrac12$
	if $\beta\leq 1$ and $\widetilde{\beta}=\nicefrac12$ if $\beta>1$. 
	By Corollary~\ref{cor:sobembedHdot}, $\dot{H}^\beta_{L_0}(\Gamma) \hookrightarrow C^{0,\widetilde{\beta}}(\Gamma)$, which concludes the proof.
\end{proof}

\section{Numerical solutions}\label{sec:numerical}
In this section, we propose numerical solutions to \eqref{Amaineqn_deterministic} 
and \eqref{Amaineqn} in the case of  generalized Kirchhoff vertex conditions. The approach combines a FEM discretization with a quadrature approximation of the fractional operator. 

\subsection{Variational formulation}\label{subsec:varform}
As a first step toward defining a numerical approximation to \eqref{Amaineqn_deterministic}, we first consider the nonfractional problem 
\begin{align}\label{eq:det}
	L_\alpha u=f,
\end{align}
where $f\in L_2(\Gamma)$, $\alpha\in\mathbb{R}$, and $L_{\alpha}$ 
is the operator \eqref{eq:operatorL} endowed with the generalized Kirchhoff vertex conditions \eqref{eq:kirchhoff}. Further, either let Assumption \ref{assum1_alt} hold, or let Assumption~\ref{assum1} hold, where 
$S = \frac{\lvert \alpha\rvert}{d_0}$ (see Remark~\ref{rmk1}).
By Theorem~\ref{compactLinverse}, \eqref{eq:det} has a unique solution $u=L_\alpha^{-1}f$, which is also a weak solution satisfying the variational problem
\begin{equation}\label{varform}
	\mathfrak{h}_\alpha(u,v)=(f,v),\quad\forall v\in H^{1}(\Gamma),
\end{equation}	
where the bilinear form $\mathfrak{h}_\alpha : H^{1}(\Gamma) \times H^{1}(\Gamma) \rightarrow \mathbb{R}$ is defined by
\begin{equation}\label{formL}
		\mathfrak{h}_\alpha(f,g)=  (\kappa^2 f, g)+\sum_{e\in\mathcal{E}}\int_{e} \H(s) f'(s) \cdot g'(s) ds\\
		+ \sum_{v\in\mathcal{V}} \frac{\alpha}{d_v}\langle F_f(v), F_g(v)\rangle,
\end{equation}
and $F_h(v) = (h_1(v),\ldots,h_{d_v}(v))^\top$.

We observe that Proposition~\ref{prp:Hdot2} provides a standard elliptic regularity. 
In fact, from the equivalence of norms of $H^2_\alpha(\Gamma)$ and 
$\dot{H}^2_{L_\alpha}(\Gamma)$, where $H^2_\alpha(\Gamma)$ was defined in 
the statement of Proposition~\ref{prp:Hdot2}, $C>0$ exists such that
\begin{align}\label{ellreg}
	\|u\|_{\widetilde{H}^2(\Gamma)}\leq C\|L_\alpha u\|_{L_2(\Gamma)} = C\|f\|_{L_2(\Gamma)}.
\end{align}

Recall that if $\alpha=0$, we obtain the Kirchhoff vertex conditions, and $S=0$, so
all the additional conditions are automatically satisfied.

\subsection[The finite dimensional space Vh]{The finite dimensional space $V_h$}
We construct a FEM discretization of the problem to numerically approximate the solution based on continuous piecewise linear basis functions, as in \cite{Arioli2017FEM}. Specifically, we create a regular subdivision of each edge $e$, with $n_e\geq 2$ intervals of length $h_e$. We let $\{x_{j}^{e}\}_{j=1}^{n_e-1}$ denote the interior nodes in this subdivision and construct standard hat basis functions on the edge:
\begin{align*}
	\psi_{j}^{e}(x^{e})=\begin{cases}
		1-\frac{\lvert x_{j}^{e}-x^e\rvert}{h_e}~~~~~\mbox{if} ~x_{j-1}^{e}\leq x^e\leq x_{j+1}^{e}\\
		0~~~~\hspace{1cm} \mbox{otherwise},
	\end{cases}
\end{align*}
where $j=1,2,\cdots n_e-1$, $x_{0}^{e}=0$, and $x_{n_e}^{e}=l_e$. These functions form a basis for
$$
V_{h}^{e}=\Big\{w\in H^{1}_{0}(e), w\lvert_{[x_{j}^{e},x_{j+1}^{e}]}\in \mathbb{P}^1, ~~j=0,1,\cdots, n_e-1\Big\},
$$
where $\mathbb{P}^1$ is the space of linear functions on $e$. To connect the edges in the finite element approximation, we also require basis functions centered on the vertices in the graph. To define these, let $e\in\mathcal{E}$ be an edge, $e = [0,l_e]$ us introduce the notation $\underline{e}$ as the vertex of $e$ at the position $0$, and $\bar{e}$ as the vertex of $e$ at the position $l_e$.  We define a neighboring set $\mathbb{W}_v$ for a vertex $v\in\mathcal{V}$ as 
\begin{align*}
	\mathbb{W}_v=\Bigg\{\bigcup_{e\in\mathcal{E}_v: ~\underline{e}=v}[v,x_{1}^{e}]\Bigg\}\cup \Bigg\{\bigcup_{e\in\mathcal{E}_v:~\bar{e}=v}[x_{n_e-1}^{e},v]\Bigg\}.	
\end{align*} 
Thus, we can define the hat functions centered on the vertices, with compact support $\supp(\phi_v(x))=\mathbb{W}$, as follows: 
\begin{align*}
	\phi_v(x^e)=\begin{cases}
		1-\frac{\lvert x_{v}^{e}-x^e\rvert}{h_e}~~~\mbox{if}~ x^e\in \mathbb{W}\cap e;  e\in\mathcal{E}_v\\
		0~~~\hspace{1cm}~~~\mbox{otherwise},
	\end{cases}
\end{align*}
where $x_{v}^{e}$ is equal to $0$ or $l_e$ depending on the direction on the edge and its parametrizations. 
Together, the functions $\psi_{j}^{e}$ and $\phi_{v}$ form the basis for the required finite dimensional space $V_{h}\subset H^{1}(\Gamma)$. Specifically, we define ${V_h=(\oplus_{e\in\mathcal{E}} V_h^e)\oplus V_v}$, where $V_v:=span(\{\phi_{v}, v\in\mathcal{V}\})$. In addition, the dimension of $V_h$, $N_h:=dim(V_h)$, is given by 
$N_h = \lvert\mathcal{V}\rvert +  \sum_{e\in\mathcal{E}}(n_e-1).$

\subsection{Finite element discretization}
We define $L_{h}:V_h\rightarrow V_h$ as the discrete version of $L_\alpha$:
\begin{equation}\label{eq:lh}
	(L_{h} u,v)= \mathfrak{h}_\alpha(u, v)\quad u, v \in V_h.
\end{equation}
The eigenvectors $\{e_{j,h}\}_{j=1}^{N_h}$ of $L_h$ satisfy the following variational equalities:
\begin{align*}
	(L_{h}e_{j,h},v)=\lambda_{j,h}(e_{j,h},v)~~~~~\forall v\in V_h, ~~~1\leq j\leq N_h,
\end{align*}
where $\{\lambda_{j,h}\}_{j=1}^{N_h}$ are eigenvalues of the corresponding 
eigenvectors $\{e_{j,h}\}_{j=1}^{N_h}$, and these eigenvalues satisfy 
$0<\lambda_{1,h}\leq \lambda_{2,h}\cdots \leq\lambda_{N_h,h}$.
By the min-max principle, 
\begin{equation}\label{eq:ineqminmax}
	\lambda_{j}\leq \lambda_{j,h},\quad j\in\mathbb{N},
\end{equation}
where $(\lambda_{j})_{j\in\mathbb{N}}$ are the eigenvalues of $L_\alpha$.
The finite element approximation is based on the variational formulation \eqref{varform} and reads: Find $u_h\in V_h$ such that
\begin{equation}\label{varformFEM}
	\mathfrak{h}_\alpha(u_h,v_h)=(f,v_h),\quad\text{ for all } v_h\in V_h.
\end{equation}
Alternatively, this can be stated as follows: Find $u_h\in V_h$ such that $L_{h}u_h=P_hf$, where ${P_h:L_2(\Gamma)\to V_h}$ denotes the $L_2(\Gamma)$-orthogonal projection onto $V_h$.

Throughout this section we will use the notation $\hat{h} = \max_e h_e$.

\begin{Proposition}\label{prp:rh}
	Let $\alpha\in\mathbb{R}$ and either assume Assumption \ref{assum1_alt}, or assume that Assumption~\ref{assum1} holds with 
	${S = \frac{\lvert \alpha\rvert}{d_0}}$ (see Remark~\ref{rmk1}). In addition, let $u$ be the solution of \eqref{varform} and $u_h$ be the solution of \eqref{varformFEM} for some $f\in L_2(\Gamma)$. Then, the following basic estimate holds:
	\begin{align}\label{Arioli1}
		\|u-u_h\|_{H^1({\Gamma})}\leq C\hat{h}\sum_{e\in E} \|u\lvert_e\|_{{H}^2(e)}=C\hat{h}\|u\|_{\widetilde{H}^2(\Gamma)},
	\end{align}
	where constant $C$ does not depend on $\hat{h}$.
\end{Proposition}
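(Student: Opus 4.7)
The plan is to prove this via the standard Céa's lemma plus nodal interpolation argument, but with care taken that the bilinear form $\mathfrak{h}_\alpha$ is continuous and coercive on $H^1(\Gamma)$, and that the nodal interpolant of $u$ belongs to the conforming space $V_h$.

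First I would verify that $\mathfrak{h}_\alpha:H^1(\Gamma)\times H^1(\Gamma)\to\mathbb{R}$ is continuous. The $(\kappa^2 f,g)$ and $\sum_e\int_e \H f'g'$ terms are bounded by $\|f\|_{H^1(\Gamma)}\|g\|_{H^1(\Gamma)}$ since $\kappa,\H\in L_\infty(\Gamma)$, while the vertex term is controlled using the same trace estimate employed in \eqref{vertexcal} (namely \cite[Lemma~8]{kuchment2004quantum}), which bounds $|F_f(v)|,|F_g(v)|$ in terms of the $H^1$-norms on incident edges. Coercivity is exactly the content of \eqref{positivedefinite2} under Assumption~\ref{assum1}, which yields $\mathfrak{h}_\alpha(u,u)\geq c\|u\|_{H^1(\Gamma)}^2$ for some $c>0$; under Assumption~\ref{assum1_alt} the vertex term in $\mathfrak{h}_\alpha(u,u)$ is already non-negative (since $\Lambda_v$ non-positive definite means, in the generalized Kirchhoff setting, $\alpha\geq 0$), so coercivity follows directly from $\kappa_0^2$ and $\H_0$.

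Next, since $V_h\subset H^1(\Gamma)$ is a conforming subspace and \eqref{varformFEM} is the Galerkin discretization of \eqref{varform}, Galerkin orthogonality $\mathfrak{h}_\alpha(u-u_h,v_h)=0$ holds for every $v_h\in V_h$. Combined with continuity and coercivity, this gives Céa's lemma:
\begin{equation*}
\|u-u_h\|_{H^1(\Gamma)}\leq C\inf_{v_h\in V_h}\|u-v_h\|_{H^1(\Gamma)}.
\end{equation*}

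For the right-hand side I would take $v_h=I_h u$ to be the edgewise nodal (Lagrange) interpolant of $u$. This is legitimate because, by Proposition~\ref{prp:Hdot2}, $u=L_\alpha^{-1}f\in\dot{H}^2_{L_\alpha}(\Gamma)$ lies in $\widetilde{H}^2(\Gamma)\cap C(\Gamma)$, so $u|_e\in H^2(e)\subset C(e)$ on each edge, the pointwise values at nodes and vertices are well-defined, and the global continuity of $u$ at vertices ensures $I_h u\in V_h$ (rather than merely in the broken space). Applying the standard one-dimensional piecewise-linear interpolation error estimate on each edge (e.g., Bramble--Hilbert), $\|u|_e-I_h u|_e\|_{H^1(e)}\leq c\,h_e\|u|_e\|_{H^2(e)}$, and summing the squares over $e\in\mathcal{E}$ gives
\begin{equation*}
\|u-I_h u\|_{H^1(\Gamma)}^2\leq c^2\sum_{e\in\mathcal{E}}h_e^2\|u|_e\|_{H^2(e)}^2\leq c^2\hat h^2\|u\|_{\widetilde{H}^2(\Gamma)}^2.
\end{equation*}
Combining this with Céa's lemma yields \eqref{Arioli1}.

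The only slightly non-routine step is verifying continuity and coercivity of $\mathfrak{h}_\alpha$, but both are essentially re-statements of estimates already produced inside the proof of Theorem~\ref{compactLinverse}. Everything else (conformity of $V_h$, applicability of nodal interpolation) is immediate once one invokes the elliptic regularity $u\in\widetilde{H}^2(\Gamma)\cap C(\Gamma)$ from Proposition~\ref{prp:Hdot2}.
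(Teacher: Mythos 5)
Your proposal is correct and follows essentially the same route as the paper: both establish coercivity and continuity of $\mathfrak{h}_\alpha$ from the estimates already derived in the proof of Theorem~\ref{compactLinverse}, invoke the elliptic regularity $u\in\dot{H}^2_{L_\alpha}(\Gamma)\subset\widetilde{H}^2(\Gamma)$ from Proposition~\ref{prp:Hdot2}, and then bound the best-approximation error by the edgewise nodal interpolant. The only cosmetic difference is that the paper phrases the quasi-optimality step as minimization of the energy functional $V_\alpha(z)=\mathfrak{h}_\alpha(u-z,u-z)$ (the symmetric form of C\'ea's lemma), whereas you use the general C\'ea inequality; these are equivalent here since $\mathfrak{h}_\alpha$ is symmetric.
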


\begin{proof}
	The proof is essentially the same as the proof of \cite[Theorem 3.2]{Arioli2017FEM}, with the main difference being the regularity theorem we require due to the more general setup. However, we provide details on the various parts for completeness.
	
	We recall the bilinear form $\mathfrak{h}_\alpha$ defined in \eqref{formL}. Together, the assumptions with \eqref{vertexcal} and \eqref{positivedefinite2} in Theorem~\ref{compactLinverse} readily imply that $C_0, C_1>0$ exist such that
	$$
	C_0 \|z\|_{H^1(\Gamma)}^2 \leq \mathfrak{h}_\alpha(z,z)\quad\hbox{and}\quad \lvert\mathfrak{h}_\alpha(z,w)\rvert\leq C_1 \|z\|_{H^1(\Gamma)} \|w\|_{H^1(\Gamma)}.
	$$
	This yields that the bilinear form $\mathfrak{h}_\alpha$ induces a norm on $H^1(\Gamma)$ equivalent to the standard norm $\|\cdot\|_{H^1(\Gamma)}$.
	Furthermore, we let $V_\alpha:V_h\to \mathbb{R}$ be given by ${V_\alpha(z) = \mathfrak{h}_\alpha(u-z,u-z)}$. The fact that $V_h\subset H^1(\Gamma)$,
	along with \eqref{varform} and \eqref{varformFEM}, implies that $V_\alpha$ is minimized at $z=u_h$. Next, we let $u_h^I$ be the interpolant of $u$ in the nodes and vertices. Thus, 
	$$
	C_0 \|u-u_h\|_{H^1(\Gamma)}^2 \leq V_\alpha(u_h) \leq V_\alpha(u_h^I)  = \mathfrak{h}_\alpha(u-u_h^I,u-u_h^I) \leq C_1 \|u-u_h^I\|_{H^1(\Gamma)}^2.
	$$
	Subsequently, we observe that, by \eqref{varform}, $u$ solves the equation
	$L_\alpha u = f$; hence, $u\in \dot{H}^2_{L_\alpha}(\Gamma)$. Therefore, 
	by Proposition~\ref{prp:Hdot2}, in particular, it follows that
	$u\in \widetilde{H}^2(\Gamma)$, which implies that
	for each $e\in\mathcal{E}$, $u\lvert_e\in H^2(e)$. 
	The remaining proof is identical to its counterpart in \cite[Theorem 3.2]{Arioli2017FEM}.
\end{proof}

By introducing the Rayleigh--Ritz projection $R_{h,\alpha}:H^1(\Gamma)\to V_h$ as
$$ 
\mathfrak{h}_\alpha(u-R_{h,\alpha}u,v_h)=0,\quad \text{ for all } ~v_h\in V_h,
$$
it is apparent that $u_h=R_{h,\alpha}u$; hence, \eqref{Arioli1} can be formulated as follows:
$$
\|u-R_{h,\alpha}u\|_{H^1({\Gamma})}\leq C\hat{h}\|u\|_{\widetilde{H}^2(\Gamma)},\text{ for all }u\in\widetilde{H}^2(\Gamma )\cap H^1({\Gamma}).
$$
Moreover, $\dot{H}^2({\Gamma})\subset \widetilde{H}^2(\Gamma )\cap H^1({\Gamma})$. In a standard way, using the elliptic regularity and the Aubin--Nietsche duality trick, we obtain an optimal error bound in $L_2(\Gamma)$ stated in the following proposition whose proof we omit. 

\begin{Proposition} \label{prp:nt}
	Let $\alpha\in\mathbb{R}$ and either assume Assumption \ref{assum1_alt}, or assume that Assumption~\ref{assum1} holds with 
	${S = \frac{\lvert \alpha\rvert}{d_0}}$ (see Remark~\ref{rmk1}). 
	For all $u\in\widetilde{H}^2(\Gamma )\cap H^1({\Gamma})$, the following error estimate holds:
	\begin{align}\label{L2error}
		\|u-R_{h,\alpha}u\|_{L_2({\Gamma})}\leq C\hat{h}^2 \|u\|_{\widetilde{H}^2(\Gamma)}.
	\end{align}
	In particular, it holds for all $u\in \dot{H}_{L_\alpha}^2({\Gamma})\subset \widetilde{H}^2(\Gamma )\cap H^1({\Gamma})$.
\end{Proposition}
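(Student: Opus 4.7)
The plan is to apply the standard Aubin--Nitsche duality argument, which is available here because essentially all the ingredients have already been set up in the excerpt: (i) the bilinear form $\mathfrak{h}_\alpha$ is symmetric and $H^1(\Gamma)$-coercive and bounded, as verified in the proof of Proposition~\ref{prp:rh}; (ii) the Rayleigh--Ritz projection satisfies the Galerkin orthogonality $\mathfrak{h}_\alpha(u - R_{h,\alpha}u, v_h) = 0$ for all $v_h \in V_h$ by definition; (iii) the $H^1$-error bound \eqref{Arioli1} is available from Proposition~\ref{prp:rh}; and (iv) the elliptic regularity \eqref{ellreg}, supplied by Proposition~\ref{prp:Hdot2}, upgrades $L_2(\Gamma)$-data to $\widetilde{H}^2(\Gamma)$-regularity for the solution of the associated variational problem.

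The first step is to set up the dual problem. For $u\in \widetilde{H}^2(\Gamma)\cap H^1(\Gamma)$, set $g := u - R_{h,\alpha}u \in L_2(\Gamma)$, and let $w \in \dot{H}^2_{L_\alpha}(\Gamma)$ be the unique solution of $L_\alpha w = g$, whose existence and uniqueness follow from Theorem~\ref{compactLinverse} and Proposition~\ref{prp:existuniqdeter}. By \eqref{ellreg} one has $\|w\|_{\widetilde{H}^2(\Gamma)} \lesssim \|g\|_{L_2(\Gamma)} = \|u - R_{h,\alpha}u\|_{L_2(\Gamma)}$, and $w$ satisfies the weak form $\mathfrak{h}_\alpha(v,w) = (g,v)$ for every $v\in H^1(\Gamma)$, by the symmetry of $\mathfrak{h}_\alpha$ and the characterization of the weak solution via \eqref{varform}.

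Testing the dual equation with $v = u - R_{h,\alpha}u \in H^1(\Gamma)$ and then subtracting the Galerkin-orthogonal term $\mathfrak{h}_\alpha(u - R_{h,\alpha}u, R_{h,\alpha} w) = 0$ yields
$$
\|u - R_{h,\alpha}u\|_{L_2(\Gamma)}^2 \;=\; \mathfrak{h}_\alpha(u - R_{h,\alpha}u, \, w - R_{h,\alpha} w).
$$
Invoking the $H^1$-continuity of $\mathfrak{h}_\alpha$ and applying the estimate \eqref{Arioli1} from Proposition~\ref{prp:rh} to both $u - R_{h,\alpha}u$ and $w - R_{h,\alpha} w$ (using that $w \in \dot{H}^2_{L_\alpha}(\Gamma) \subset \widetilde{H}^2(\Gamma) \cap H^1(\Gamma)$), one obtains
$$
\|u - R_{h,\alpha}u\|_{L_2(\Gamma)}^2 \;\lesssim\; \hat{h}^{\,2}\, \|u\|_{\widetilde{H}^2(\Gamma)}\, \|w\|_{\widetilde{H}^2(\Gamma)} \;\lesssim\; \hat{h}^{\,2}\, \|u\|_{\widetilde{H}^2(\Gamma)}\, \|u - R_{h,\alpha}u\|_{L_2(\Gamma)},
$$
and dividing through by $\|u - R_{h,\alpha}u\|_{L_2(\Gamma)}$ delivers \eqref{L2error}.

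I do not anticipate any serious obstacle in this argument: the only point that needs even minor care is checking that the elliptic regularity constant in \eqref{ellreg} is available under Assumption~\ref{assum1} (with $S = \lvert\alpha\rvert/d_0$) or Assumption~\ref{assum1_alt}, but this is precisely the content of Proposition~\ref{prp:Hdot2}, whose proof was given earlier. Everything else is a verbatim transcription of the classical Aubin--Nitsche trick to the metric-graph setting, with the underlying functional framework handled by the regularity theorem for $L_\alpha$ with generalized Kirchhoff vertex conditions.
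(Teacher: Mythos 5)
Your proposal is correct and is precisely the argument the paper has in mind: the text introducing Proposition~\ref{prp:nt} states that the bound follows ``in a standard way, using the elliptic regularity and the Aubin--Nietsche duality trick'' and omits the details, which your write-up supplies faithfully (dual problem $L_\alpha w = u - R_{h,\alpha}u$, Galerkin orthogonality, the $H^1$-estimate \eqref{Arioli1}, and the regularity bound \eqref{ellreg}). No gaps.
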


\begin{Remark}
	As for $u\in L_2(\Gamma)$, the $L_2(\Gamma)$-orthogonal projection $P_hu$ of $u$ is the best approximation of $u$ in the $L_2(\Gamma)$-norm; thus, we immediately conclude by Proposition~\ref{prp:Hdot2} that, for $u\in\dot{H}_{L_\alpha}^2({\Gamma})=D(L_\alpha)$, the following error estimate holds: 
	$$
	\|u-P_hu\|_{L_2(\Gamma)}\leq  C\hat{h}^2 \|u\|_{\widetilde{H}^2(\Gamma)}\leq C \hat{h}^2 \|L_\alpha u\|_{L_2(\Gamma)}.
	$$
	Moreover, because $P_h$ is a $L_2(\Gamma)$-contraction, using standard interpolation, 
	for ${u\in \dot{H}^{2s}({\Gamma})=D(L_\alpha^s)}$, $0\leq s\leq 1$, the following error estimate holds:
	\begin{equation}\label{eq:phest}
		\|u-P_hu\|_{L_2(\Gamma)}\leq C \hat{h}^{2s} \|L_\alpha^su\|_{L_2(\Gamma)}.
	\end{equation}
\end{Remark}

Next note that we can rewrite \eqref{L2error} in terms of the operators $L_\alpha$ and $L_{h}$ as follows:
\begin{equation}\label{eq:inverses}
	\|L_{h}^{-1}P_hf-L_\alpha^{-1}f\|_{L_2(\Gamma)}\leq C 
	\hat{h}^2 \|L_\alpha^{-1}f\|_{\widetilde{H}^2(\Gamma)}\leq C \hat{h}^2 \|f\|_{L_2(\Gamma)},
\end{equation}
where we applied the elliptic regularity estimate \eqref{ellreg} for the last inequality. 

The next result is analogous to \cite[Theorem 1]{coxkirchner} for the case of Euclidean domains. However there are key differences. 
By employing parabolic techniques in the proof, which is different to the strategy in \cite{coxkirchner}, 
we directly obtain a bound relative to the $\dot{H}_L^s(\Gamma)$ spaces (which is the norm of interest for us and, in general, often is of interest for stochastic problems) without requiring the equivalence between the $\dot{H_L^s}$ norm and the fractional Sobolev norm 
(this was required in \cite{coxkirchner}).  By doing so, we are able to prove this result under the generalized Kirchhoff conditions. 
Finally, in contrast to  \cite[Theorem 1]{coxkirchner}, we do not require the stability of the $L_2(\Gamma)$-orthogonal projection onto the finite element space in the $H^1(\Gamma)$-norm nor the availability of an inverse inequality; both of these require additional restrictions on the mesh. 
\begin{Lemma}\label{lem:fracp}
	Let $\alpha\in\mathbb{R}$ and either assume Assumption \ref{assum1_alt}, or assume that Assumption~\ref{assum1} holds with 
	${S = \frac{\lvert \alpha\rvert}{d_0}}$ (see Remark~\ref{rmk1}). 
	\begin{itemize}
		\item[(a)] Let $0<\beta <1$, $0\leq s \leq 1$, and $f\in D(L_\alpha^s)$. 
		If $\beta+s<1$, then for all sufficiently small $\epsilon>0$, $C=C(\epsilon,s,\beta)$ exists such that, for $0<\hat{h}<1$,
		$$
		\|L_{h}^{-\beta}P_hf-L_\alpha^{-\beta}f\|_{L_2(\Gamma)}\leq C  \hat{h}^{2\beta+2s-\epsilon}\|L_\alpha^sf\|_{L_2(\Gamma)} = C \hat{h}^{2\beta+2s-\epsilon}\|f\|_{\dot{H}_{L_\alpha}^{2s}(\Gamma)}.
		$$
		If $\beta+s\geq 1$, then for all sufficiently small $\epsilon>0$, $C=C(\epsilon,s,\beta)$ exists such that, for $0<\hat{h}<1$,
		$$
		\|L_{h}^{-\beta}P_hf-L_\alpha^{-\beta}f\|_{L_2(\Gamma)}\leq C  \hat{h}^{2-\epsilon}\|L_\alpha^sf\|_{L_2(\Gamma)} = C  \hat{h}^{2-\epsilon} \|f\|_{\dot{H}^{2s}_{L_\alpha}(\Gamma)}.
		$$ 
		\item[(b)] If $1<\beta<2$, then for all sufficiently small $\epsilon>0$, $C=C(\epsilon,\beta)$ exists such that
		$$
		\|L_{h}^{-\beta}P_hf-L_\alpha^{-\beta}f\|_{L_2(\Gamma)}\leq C  \hat{h}^{2-\epsilon}\|f\|_{L_2(\Gamma)},\quad 0<\hat{h}<1.
		$$
	\end{itemize}
\end{Lemma}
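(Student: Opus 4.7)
My plan is to use the subordination identity
\begin{equation*}
L_\alpha^{-\beta} = \frac{1}{\Gamma(\beta)}\int_0^\infty t^{\beta-1}\,e^{-tL_\alpha}\,\md t,\qquad L_h^{-\beta} = \frac{1}{\Gamma(\beta)}\int_0^\infty t^{\beta-1}\,e^{-tL_h}\,\md t,
\end{equation*}
valid for every $\beta>0$ because $L_\alpha$ and $L_h$ are self-adjoint and strictly positive by Theorem~\ref{compactLinverse} and \eqref{eq:lh}. Setting $E_h(t) := e^{-tL_\alpha} - e^{-tL_h}P_h$, subtraction gives
\begin{equation*}
L_\alpha^{-\beta} f - L_h^{-\beta} P_h f = \frac{1}{\Gamma(\beta)}\int_0^\infty t^{\beta-1}\,E_h(t)f\,\md t,
\end{equation*}
so the proof reduces to bounding $\|E_h(t)f\|_{L_2(\Gamma)}$ and integrating it against $t^{\beta-1}$.

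The technical core is a family of parabolic error bounds on $E_h(t)$. Since $\lambda_{1,h}\geq \lambda_1$ by \eqref{eq:ineqminmax}, both semigroups decay as $e^{-\lambda_1 t}$, which yields the stability estimate $\|E_h(t)f\|_{L_2}\leq 2 e^{-\lambda_1 t}\|f\|_{L_2}$. A standard Thom\'ee-type analysis based on the Ritz-projection estimate \eqref{L2error}, the elliptic regularity \eqref{ellreg} and Duhamel's principle yields the smooth-data bound $\|E_h(t)f\|_{L_2}\lesssim \hat{h}^2\|L_\alpha f\|_{L_2}$, uniform in $t$, together with the non-smooth bound $\|E_h(t)f\|_{L_2}\lesssim \hat{h}^2 t^{-1}\|f\|_{L_2}$. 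Real interpolation of $E_h(t)$ along the scale $[L_2(\Gamma),D(L_\alpha)]_s = D(L_\alpha^s)=\dot{H}_{L_\alpha}^{2s}(\Gamma)$ then produces, for $0\leq s\leq 1$ and $f\in D(L_\alpha^s)$,
\begin{equation*}
\|E_h(t)f\|_{L_2(\Gamma)}\lesssim \min\!\bigl(\hat{h}^{2s},\,\hat{h}^2 t^{s-1}\bigr)\|L_\alpha^s f\|_{L_2(\Gamma)},
\end{equation*}
with the two terms crossing over at $t\sim\hat{h}^2$.

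For part (a), I split the integral at $t_1=\hat{h}^2$ and a fixed $t_2>0$. The short-time piece gives $\lesssim \hat{h}^{2s}\int_0^{\hat{h}^2} t^{\beta-1}\md t = \hat{h}^{2\beta+2s}/\beta$; the middle piece gives $\lesssim \hat{h}^2\int_{\hat{h}^2}^{t_2} t^{\beta+s-2}\md t$, which evaluates to $O(\hat{h}^{2\beta+2s})$ when $\beta+s<1$, to $O(\hat{h}^2|\!\log\hat{h}|)=O(\hat{h}^{2-\epsilon})$ when $\beta+s=1$, and to $O(\hat{h}^2)$ when $\beta+s>1$; the tail $(t_2,\infty)$ contributes a bounded constant from the exponential decay of $E_h(t)$ together with $\|f\|_{L_2}\leq \lambda_1^{-s}\|L_\alpha^s f\|_{L_2}$. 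Summing produces $\hat{h}^{2\beta+2s-\epsilon}\|L_\alpha^s f\|_{L_2}$ when $\beta+s<1$ and $\hat{h}^{2-\epsilon}\|L_\alpha^s f\|_{L_2}$ when $\beta+s\geq 1$; the $-\epsilon$ absorbs the logarithm at the threshold $\beta+s=1$ and the blow-up of the constant $1/(1-\beta-s)$ nearby. For part (b), $1<\beta<2$, I apply only the non-smooth bound with $s=0$: since $\beta-2>-1$, the integral $\hat{h}^2\int_0^\infty t^{\beta-2}e^{-\lambda_1 t/2}\md t$ converges, so $\|L_\alpha^{-\beta}f-L_h^{-\beta}P_hf\|_{L_2}\lesssim \hat{h}^2\|f\|_{L_2}$; the $\hat{h}^{2-\epsilon}$ in the statement reflects the blow-up of the constant as $\beta\to 1^+$.

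The main obstacle is establishing the two parabolic bounds on $E_h(t)$ in the metric-graph setting. In the Euclidean case they are classical (Thom\'ee), and the adaptation transfers verbatim because all necessary ingredients are already available: self-adjointness and strict positivity of $L_\alpha$ from Theorem~\ref{compactLinverse}, the identification $\dot{H}^2_{L_\alpha}(\Gamma)\cong \widetilde{H}^2(\Gamma)\cap H^1(\Gamma)$ with \eqref{ellreg}, the Ritz-projection $L_2$ error \eqref{L2error}, stability of $P_h$ in $L_2$, and $\lambda_{1,h}\geq \lambda_1$ from \eqref{eq:ineqminmax}. Once the parabolic bounds are in hand, the lemma reduces to the elementary integral bookkeeping sketched above.
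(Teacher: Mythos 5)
Your overall strategy is the same as the paper's: represent $L_\alpha^{-\beta}-L_h^{-\beta}P_h$ via the Balakrishnan integral $\frac{1}{\Gamma(\beta)}\int_0^\infty t^{\beta-1}\bigl(e^{-tL_\alpha}-e^{-tL_h}P_h\bigr)\,dt$, import Thom\'ee's smooth/non-smooth parabolic error bounds for $E_h(t):=e^{-tL_\alpha}-e^{-tL_h}P_h$ (which do transfer to this setting thanks to \eqref{L2error}, \eqref{ellreg} and \eqref{eq:ineqminmax}), and then do the time-integral bookkeeping. However, two of your integral estimates do not close as written. First, in part (a) your tail $\int_{t_2}^\infty t^{\beta-1}\|E_h(t)f\|\,dt$ is controlled only by the exponential decay $\|E_h(t)f\|\le 2e^{-\lambda_1 t}\|f\|$, which yields a contribution of size $C\|f\|_{L_2(\Gamma)}$ with \emph{no} power of $\hat{h}$; a constant is not $O(\hat{h}^{2\beta+2s-\epsilon})$, so "summing" cannot produce the claimed rate. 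The fix is to use the non-smooth bound $\|E_h(t)f\|\lesssim \hat{h}^2t^{-1}\|f\|$ on the tail (the integrand $t^{\beta-2}$ is integrable at infinity precisely because $\beta<1$), which is what the paper does by splitting at $t=\hat{h}^{-2}$ rather than at a fixed $t_2$; on $[0,\hat h^{-2}]$ the paper uses the geometric mean $\min(a,b)\le a^{1-\gamma}b^{\gamma}$ of the two parabolic bounds with $\gamma=(\beta-\nicefrac{\epsilon}{4})/(1-s)$, which is where the $\epsilon$-loss for $\beta+s<1$ actually enters.

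Second, in part (b) you integrate against the bound $\hat{h}^{2}t^{-1}e^{-\lambda_1 t/2}\|f\|$, but this bound is not among those you have established and cannot be obtained without cost: the two available estimates are $\hat{h}^2t^{-1}\|f\|$ (no exponential decay) and $Ce^{-\lambda_1 t}\|f\|$ (no factor of $\hat{h}$), and combining them via $\min(a,b)\le a^{1-\theta}b^{\theta}$ necessarily degrades the rate to $\hat{h}^{2(1-\theta)}$. Since for $1<\beta<2$ the integrand $t^{\beta-2}$ is \emph{not} integrable at infinity, some exponential decay is indispensable there, and the resulting loss $\hat{h}^{-2\theta}$ is exactly why the lemma asserts $\hat{h}^{2-\epsilon}$ rather than $\hat{h}^{2}$ — not, as you write, because of constant blow-up as $\beta\to1^+$ (for fixed $\beta\in(1,2)$ the constant $1/(\beta-1)$ is harmless). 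The paper handles this by passing to the semigroups $e^{-tL_\alpha^2}$, $e^{-tL_h^2}$, writing $L_\alpha^{-\beta}=(L_\alpha^2)^{-\beta/2}$, and applying the same $\theta$-interpolation to get $\hat{h}^{2(1-\epsilon)}t^{-\frac12(1-\epsilon)}e^{-\epsilon\lambda_1^2 t}$; your direct route can be repaired the same way (or by splitting the tail at $T\sim\lvert\log\hat{h}\rvert$), but as stated the key integrand bound is unjustified.
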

\begin{proof}
	(a) As $L_\alpha$ is self-adjoint and positive definite on $L_2(\Gamma)$, it 
	follows that $-L_\alpha$ generates an analytic semigroup of contractions 
	$\{E(t)\}_{t\geq 0}$ satisfying
	$$
	\|\frac{d^n}{dt^n}(E(t)f)\|_{L_2(\Gamma)}\leq \frac{C_n}{t^n}\|f\|_{L_2(\Gamma)},\quad t>0,
	$$ 
	for some $C_n>0$, $n=1,2,\dots$. The operator $T_h:=L_{h}^{-1}P_h$ is self-adjoint and positive semidefinite on $L_2(\Gamma)$ and positive definite on $V_h$. Therefore, considering \eqref{eq:inverses}, we can copy the proof of \cite[Theorem 3.5]{Thomee} to conclude that
	$$
	\|E_{h}(t)P_hf-E(t)f\|_{L_2(\Gamma)}\leq C\hat{h}^2t^{s-1}\|L_\alpha^sf\|_{L_2(\Gamma)},\quad f\in D(L_\alpha^s),\quad s\in [0,1],
	$$ 
	and
	$$
	\|E_{h}(t)P_hf-E(t)f\|_{L_2(\Gamma)}\leq C\hat{h}^{2s}\|L_\alpha^sf\|_{L_2(\Gamma)},\quad f\in D(L_\alpha^s),\quad s\in [0,1],
	$$
	where $\{E_{h}(t)\}_{t\geq 0}$ is the semigroup of contractions on $V_h$ generated by $-L_{h}$. Therefore, for $f\in D(L_\alpha^s)$ and $s,\gamma\in [0,1]$,
	\begin{align*}
		\|E_{h}(t)P_hf-E(t)f\|_{L_2(\Gamma)} &\leq C\hat{h}^{2\gamma} t^{\gamma(s-1)}\hat{h}^{2s(1-\gamma)}\|L_\alpha^sf\|_{L_2(\Gamma)}\\
		&=C t^{\gamma(s-1)}\hat{h}^{2(s+\gamma-s\gamma)}\|L_\alpha^sf\|_{L_2(\Gamma)}.
	\end{align*}
	Next, for $0< \beta<1$ and $0\leq s \leq 1$, we write
	\begin{align*}
		&\|L_{h}^{-\beta}P_hf-L_\alpha^{-\beta}f\|_{L_2(\Gamma)}=\left\| \frac{1}{\Gamma(\beta)}\int_{0}^{\infty}t^{\beta-1}(E_{h}(t)P_hf-E(t)f)\,dt\right\|_{L_2(\Gamma)}\\
		&\leq \int_0^{\frac1{\hat{h}^2}}\frac{t^{\beta-1}}{\Gamma(\beta)}\|E_{h}(t)P_hf-E(t)f\|_{L_2(\Gamma)}\,dt
		+\int_{\frac1{\hat{h}^2}}^{\infty}\frac{t^{\beta-1}}{\Gamma(\beta)}\|E_{h}(t)P_hf-E(t)f\|_{L_2(\Gamma)}\,dt\\
		&=:E_1+E_2,
	\end{align*}
	where we used the representation of negative fractional powers from \cite[Chapter~2, (6.9)]{Pazy} in the first equality above. We estimate the first term as follows:
	\begin{align}
		E_1 &\leq  C\int_0^{\hat{h}^{-2}} t^{\gamma(s-1)+\beta -1}\hat{h}^{2(s+\gamma-s\gamma}\|L_\alpha^sf\|_{L_2(\Gamma)}\,dt\notag \\
		&\leq C \hat{h}^{4\gamma(1-s)+2s-2\beta}\|L_\alpha^sf\|_{L_2(\Gamma)}, \label{eq:firstterm}
	\end{align}
	provided that $\gamma(s-1)+\beta -1>-1$ (i.e., $\gamma<\frac{\beta}{1-s}$). Let now $\beta+s<1$ (i.e., $\frac{\beta}{1-s}<1$) and for sufficiently small $\epsilon>0$ set $\gamma=\frac{\beta-\nicefrac{\epsilon}{4}}{1-s}$. Then, from \eqref{eq:firstterm},  
	${E_1\leq C \hat{h}^{2\beta+2s-\epsilon}\|L_\alpha^sf\|_{L_2(\Gamma)}}$ and
	for all $0<\beta<1$ and $0\leq s \leq 1$, we have
	\begin{align*}
		E_2 &\leq C\hat{h}^2\int_{\hat{h}^{-2}}^\infty 	t^{\beta-1-1}\|f\|_{L_2(\Gamma)}\leq C \hat{h}^{4-2\beta}\|f\|_{L_2(\Gamma)}\\
		&\leq C \hat{h}^2 \|f\|_{L_2(\Gamma)}
		\leq C \hat{h}^2 \|L_\alpha^sf\|_{L_2(\Gamma)}.
	\end{align*}
	Thus, if $0< \beta<1$, $0\leq s \leq 1$, and $\beta+s<1$, then for all $f\in D(L_\alpha^s)$,
	$$
	\|L_{h}^{-\beta}P_hf-L_\alpha^{-\beta}f\|_{L_2(\Gamma)}\leq C  \hat{h}^{2\beta+2s-\epsilon}\|L_\alpha^sf\|_{L_2(\Gamma)}.
	$$
	for sufficiently small $\epsilon>0$. Let now $0<\beta <1$, $0 \leq s \leq 1$, such that $\beta+s\geq 1$ and $f\in D(L_\alpha^s)$. 
	Fix some arbitrary $\delta$ such that $0<\delta<2-2\beta$. Then, $0\leq \tilde{s}<s$  exists
	such that $\beta+\tilde{s}=1-\nicefrac{\delta}{2}$. Thus, $f\in D(L_\alpha^{\tilde{s}})$ and
	\begin{equation*}
		\|L_{h}^{-\beta}P_hf-L_\alpha^{-\beta}f\|_{L_2(\Gamma)}\leq C  \hat{h}^{2-\delta-\epsilon}\|L_\alpha^{\tilde{s}}f\|_{L_2(\Gamma)}
		\leq C\hat{h}^{2-\delta-\epsilon}\|L_\alpha^{s}f\|_{L_2(\Gamma)}.
	\end{equation*}
	(b) Let $S(t):=e^{-tL_\alpha^2}$ and $S_h(t):=e^{-tL_{h}^2}$ (i.e., the operator semigroups generated by $-L_\alpha^2$ and $-L_{h}^2$, 
	respectively). As mentioned above, both semigroups are analytic. Using Parseval's identity reveals that
	$
	\|S(t)f\|_{L_2(\Gamma)}\leq e^{-\lambda^2_1 t}\|f\|_{L_2(\Gamma)}$ for $t\geq 0$,
	and
	$$
	\|S_h(t)P_hf\|_{L_2(\Gamma)}\leq e^{-\lambda^2_{1,h} t}\|f\|_{L_2(\Gamma)}\leq e^{-\lambda_1^2 t}\|f\|_{L_2(\Gamma)},\quad t\geq 0.
	$$
	Given Propositions~\ref{prp:rh} and \ref{prp:nt}, we use the same abstract argument that proves \cite[Corollary 5.3]{EL92} to conclude that
	$\|S_h(t)P_hf-S(t)f\|_{L_2(\Gamma)}\leq C \hat{h}^2t^{-\frac12}\|f\|_{L_2(\Gamma)}$, for $t>0$ and $0<\hat{h}<1$.
	Thus, for arbitrary $0<\epsilon \leq 1$, 
	$$
	\|S_h(t)P_hf-S(t)f\|_{L_2(\Gamma)}\leq C \hat{h}^{2(1-\epsilon)}t^{-\frac{1}{2}(1-\epsilon)}e^{-\epsilon \lambda_1^2 t}\|f\|_{L_2(\Gamma)},\quad t>0,\quad 0<\hat{h}<1.
	$$
	Therefore, if $1<\beta<2$, then
	\begin{align*}
		\|L_{h}^{-\beta}P_hf-& L_\alpha^{-\beta}f\|_{L_2(\Gamma)} =
		\|(L_{h}^2)^{-\frac{\beta}{2}}P_hf-(L_\alpha^2)^{-\frac{\beta}{2}}f\|_{L_2(\Gamma)}\\
		&=\left\| \frac{1}{\Gamma(\beta)}\int_{0}^{\infty}t^{\frac{\beta}{2}-1}(S_h(t)P_hf-S(t)f)\,dt\right\|_{L_2(\Gamma)}\\
		&\leq C\hat{h}^{2(1-\epsilon)}\int_{0}^{\infty}t^{\frac{\beta}{2}-1-\frac{1}{2}(1-\epsilon)}e^{-\lambda_1^2\epsilon t}\,dt \|f\|_{L_2(\Gamma)}
		\leq C \hat{h}^{2(1-\epsilon)}\|f\|_{L_2(\Gamma)},
	\end{align*}
	and the proof of part (b) and, thus, that of the lemma is complete.
\end{proof}

\subsection{The fractional deterministic model}\label{sec:deterministic}
Next, we construct the numerical approximation of the solution $u$ in \eqref{Amaineqn_deterministic}. To obtain the final approximation, 
we must address the fractional power $L_{h}^\beta$. We do so via a
quadrature approximation of the inverse fractional power operator 
by applying Dunford--Taylor calculus (see \cite{balakrishnan1960fractional}). 
Consider the following exponentially convergent quadrature approximation 
of $L_{h}^{-\beta}$:  
\begin{align}\label{quadrature}
	Q_{h,k,\beta}:=\frac{2k \sin(\pi\beta)}{\pi}\sum_{l=-K^{-}}^{K^+}e^{2\beta l k}\left(Id_{V_h}+e^{2l k}L_{h}\right)^{-1},
\end{align} 
as introduced by \cite{bonito2015numerical}, where $k>0$ is the quadrature step size, $K^{-}:=\ceil[\big]{\frac{\pi^2}{4\beta k^2}}$, and 
$K^{+}:=\ceil[\big]{\frac{\pi^2}{4(1-\beta) k^2}}.$
With \eqref{quadrature}, we define an approximation of the solution to \eqref{Amaineqn_deterministic} as
\begin{align}\label{mainApproximation_determ}
	u_{h,k}^{Q}:=Q_{h,k,\beta}f_h,
\end{align}
where $f_h = P_h f$. 
The following theorem provides the strong convergence of the approximation
$u_{h,k}^{Q}$ in \eqref{mainApproximation_determ} to the solution $u$ of \eqref{Amaineqn_deterministic}.

\begin{Theorem}\label{thm:strong_rate_determ}
	Let $\alpha\in\mathbb{R}$ and either assume Assumption \ref{assum1_alt}, or assume that Assumption~\ref{assum1} holds with 
	${S = \frac{\lvert \alpha\rvert}{d_0}}$ (see Remark~\ref{rmk1}). 
	If $0<\beta <1$ and $\sigma<\beta$, then $C=C_\sigma>0$ exists such that
	$
	\|u-u_{h,k}^{Q}\|_{L_2(\Gamma)}\leq C(\hat{h}^{2\sigma}+e^{-\frac{\pi^2}{2k}})\|f\|_{L_2(\Gamma)}.
	$
\end{Theorem}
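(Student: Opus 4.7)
The plan is to decompose the error via the triangle inequality into a finite element discretization error and a quadrature error:
\begin{equation*}
\|u - u_{h,k}^Q\|_{L_2(\Gamma)} \leq \|L_\alpha^{-\beta} f - L_h^{-\beta} P_h f\|_{L_2(\Gamma)} + \|L_h^{-\beta} P_h f - Q_{h,k,\beta} P_h f\|_{L_2(\Gamma)}.
\end{equation*}
Each term can be handled by a result already available in the paper or the referenced literature.

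For the first term, I would invoke Lemma~\ref{lem:fracp}(a) with $s=0$. Since $0<\beta<1$, we have $\beta + s = \beta < 1$, and the first bound of part (a) gives
\begin{equation*}
\|L_\alpha^{-\beta} f - L_h^{-\beta} P_h f\|_{L_2(\Gamma)} \leq C(\epsilon,\beta)\, \hat{h}^{2\beta - \epsilon} \|f\|_{L_2(\Gamma)}
\end{equation*}
for every sufficiently small $\epsilon>0$. Given $\sigma<\beta$, it suffices to choose $\epsilon = 2(\beta - \sigma) > 0$, yielding the desired rate $\hat{h}^{2\sigma}$ with a constant depending on $\sigma$.

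For the second (quadrature) term, I would appeal to the exponential convergence of the Bonito--Pasciak sinc quadrature rule from \cite{bonito2015numerical}. Since $L_h$ is self-adjoint and positive definite on the finite-dimensional space $V_h$, with spectrum bounded below by $\lambda_{1,h} \geq \lambda_1 > 0$ (by \eqref{eq:ineqminmax}) and above uniformly in $h$ via spectral representation, the standard estimate gives
\begin{equation*}
\|L_h^{-\beta} v_h - Q_{h,k,\beta} v_h\|_{L_2(\Gamma)} \leq C\, e^{-\pi^2/(2k)} \|v_h\|_{L_2(\Gamma)}, \qquad v_h \in V_h,
\end{equation*}
with a constant $C$ independent of $h$ and $k$ (the uniform spectral bound is needed so that the constant does not depend on $h$). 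Applying this with $v_h = P_h f$ and using that $P_h$ is an $L_2(\Gamma)$-contraction, we obtain
\begin{equation*}
\|L_h^{-\beta} P_h f - Q_{h,k,\beta} P_h f\|_{L_2(\Gamma)} \leq C\, e^{-\pi^2/(2k)} \|f\|_{L_2(\Gamma)}.
\end{equation*}

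Summing the two bounds then gives the stated estimate. The main technical point is to verify that Bonito--Pasciak's exponential convergence estimate can be applied with a constant uniform in $h$; this follows from the lower spectral bound $\lambda_{1,h} \geq \lambda_1$ and the fact that the quadrature error for the Balakrishnan representation is controlled entirely in terms of this lower spectral bound, rather than any upper bound on the spectrum. The first (FEM) part of the argument is then a direct invocation of Lemma~\ref{lem:fracp} and requires no further work.
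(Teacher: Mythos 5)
Your proof is correct and follows essentially the same route as the paper: the same two-term splitting $u-u_{h,k}^Q=(L_\alpha^{-\beta}f-L_h^{-\beta}P_hf)+(L_h^{-\beta}P_h-Q_{h,k,\beta})P_hf$, with Lemma~\ref{lem:fracp}(a) at $s=0$ (absorbing $\epsilon=2(\beta-\sigma)$) for the first term and \cite[Theorem~3.5]{bonito2015numerical} together with the $L_2$-contractivity of $P_h$ for the second. One minor slip: the spectrum of $L_h$ is \emph{not} bounded above uniformly in $h$ (the top eigenvalue grows like $\hat{h}^{-2}$), but this is harmless since, as you correctly note at the end, the sinc-quadrature constant depends only on the lower spectral bound $\lambda_{1,h}\geq\lambda_1>0$.
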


\begin{proof}
	We write
	$$
	u-u_{h,k}^{Q}=(L_\alpha^{-\beta}f-
	L_{h}^{-\beta}P_hf_h)+(L_{h}^{-\beta}P_h-
	Q_{h,k,\beta})f_h:=E_1+E_2.
	$$	
	To bound $E_1$, we use Lemma~\ref{lem:fracp} to obtain
	$\|E_1\|_{L_2(\Gamma)}^2\leq C \hat{h}^{2\sigma}\|f\|^2_{L_2(\Gamma)}$, and 
	by \cite[Theorem 3.5]{bonito2015numerical},  
	$\|E_2\|_{L_2(\Gamma)}\leq C e^{-\frac{\pi^2}{2k}}\|f_h\|_{L_2(\Gamma)}\leq C e^{-\frac{\pi^2}{2k}} \|f\|_{L_2(\Gamma)}$.
\end{proof}

\subsection{The fractional stochastic model}\label{sec:fem_stochastic}
At this point, we can construct the numerical approximation of the generalized Whittle--Mat\'ern field  $u$ defined in \eqref{Amaineqn}. To that end, first, we perform the same finite 
element approximation as in the deterministic case, resulting in the discrete 
problem of finding $u_h\in V_h$ such that $L_{h}^\beta u_h= \mathcal{W}_h$, where
$\mathcal{W}_h$ is a $V_h$ valued approximation of the white noise defined as
	$\mathcal{W}_h:=\sum_{k=1}^\infty \xi_kP_he_k.$

To obtain 
the final approximation, we proceed as in Section~\ref{sec:deterministic} and apply 
$Q_{h,k,\beta}$ instead of $L_{h}^{-\beta}$. 
Applying the quadrature scheme, we define an approximation of the solution 
$u$ to \eqref{Amaineqn} as follows:
\begin{align}\label{mainApproximation}
	u_{h,k}^{Q}:=Q_{h,k,\beta}\mathcal{W}_{h}.
\end{align}

	Let $\{\phi_i\}_{i=1}^{N_h}$ denote a basis for $V_h$. To sample the approximate solution \eqref{mainApproximation}, we must sample the vector
	$
	\mv{W}_h=[\langle\mathcal{W}_h,\phi_1\rangle_{L_2(\Gamma)}, \langle\mathcal{W}_h,\phi_12\rangle_{L_2(\Gamma)},\dots, \langle\mathcal{W}_h,\phi_{N_h}\rangle_{L_2(\Gamma)} ]^\top,
	$ 
	which is a zero mean $N_h$-dimensional Gaussian vector with covariance matrix $\mv{M}$ with elements
	$M_{ij}=\langle \phi_i,\phi_j\rangle_{L_2(\Gamma)}$, $i,j=1,2,\dots, N_h.$ 
	Indeed, $\mv{W}_h$ is a zero mean Gaussian, as $\mathcal{W}_h$ is a zero mean Gaussian in $L_2(\Gamma)$. Using Parseval’s formula, if $\psi,\phi\in V_h$, then
	$$
	\mathbb{E} (\langle \mathcal{W}_h,\phi  \rangle_{L_2(\Gamma)} \langle \mathcal{W}_h,\psi  \rangle_{L_2(\Gamma)} )=\sum_{k=1}^\infty \langle P_he_k,\phi \rangle_{L_2(\Gamma)}  \langle P_he_k,\psi \rangle_{L_2(\Gamma)} =
	\langle \phi,\psi\rangle_{L_2(\Gamma)} .
	$$
	Thus, for $\mv{a},\mv{b}\in \mathbb{R}^{N_h}$, we obtain
	\begin{equation}\label{eq:sample}
		\begin{aligned}
		\mathbb{E}(\langle \mv{W}_h, \mv{a}\rangle_{\mathbb{R}^{N_h}} \langle \mv{W}_h, \mv{b}\rangle_{\mathbb{R}^{N_h}})
		&=\mathbb{E}  \Big(\big\langle \mathcal{W}_h,\sum_{i=1}^{N_h}a_i\phi_i  \big\rangle_{L_2(\Gamma)} \big\langle \mathcal{W}_h,\sum_{i=1}^{N_h}b_i\phi_i  \big\rangle_{L_2(\Gamma)} \Big)\\
		&=\big\langle \sum_{i=1}^{N_h}a_i\phi_i, \sum_{i=1}^{N_h}b_i\phi_i \big\rangle_{L_2(\Gamma)}=\big\langle \mv{M} \mv{a},\mv{b}\big\rangle_{\mathbb{R}^{N_h}}.
		\end{aligned}
	\end{equation}
% \end{Remark}

The following theorem provides the strong convergence of the approximation
$u_{h,k}^{Q}$ in \eqref{mainApproximation} to the solution $u$ of \eqref{Amaineqn}.

\begin{Theorem}\label{thm:strong_rate}
	Let $\alpha\in\mathbb{R}$ and either assume Assumption \ref{assum1_alt}, or assume that Assumption~\ref{assum1} holds with 
	${S = \frac{\lvert \alpha\rvert}{d_0}}$ (see Remark~\ref{rmk1}). 
	In addition, let $u$ be the solution of \eqref{Amaineqn} under the generalized Kirchhoff vertex conditions. If $\frac14<\beta <1$ and $\frac14<\sigma<\beta$, then $C=C_\sigma>0$ exists such that
	$$
	(\mathbb{E}\|u-u_{h,k}^{Q}\|_{L_2(\Gamma)}^2)^{\frac12}\leq C(\hat{h}^{2\sigma-\frac12}+\hat{h}^{2\sigma}N_h^\frac12+e^{-\frac{\pi^2}{2k}}N_h^{\frac12}),
	$$
	where $N_h=\dim V_h$.
\end{Theorem}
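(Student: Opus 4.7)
The strategy is to split
\[
u - u_{h,k}^{Q} = \bigl(L_\alpha^{-\beta}\mathcal{W} - L_h^{-\beta}P_h\mathcal{W}\bigr) + \bigl(L_h^{-\beta} - Q_{h,k,\beta}\bigr)\mathcal{W}_h,
\]
which reduces the task to bounding the $\mathcal{L}_2(L_2(\Gamma))$-norm of the corresponding operators through the Hilbert--Schmidt identity
\[
\mathbb{E}\bigl\|A\mathcal{W}\bigr\|_{L_2(\Gamma)}^{2} \;=\; \|A\|_{\mathcal{L}_2(L_2(\Gamma))}^{2} \;=\; \sum_{k} \|A e_k\|_{L_2(\Gamma)}^{2}.
\]
Since $\beta>\nicefrac14$, Corollary~\ref{cor:genkirchhoffweyl} guarantees $L_\alpha^{-\beta}\in\mathcal{L}_2(L_2(\Gamma))$, and the other operators that appear have rank at most $N_h$, so every Hilbert--Schmidt norm is finite. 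I then further decompose the spatial error as
\[
L_\alpha^{-\beta} - L_h^{-\beta}P_h \;=\; L_\alpha^{-\beta}(I-P_h) + (L_\alpha^{-\beta} - L_h^{-\beta})P_h
\]
and treat the three resulting operators separately.

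For the first summand $\|L_\alpha^{-\beta}(I-P_h)\|_{\mathcal{L}_2(L_2(\Gamma))}$, I would take the adjoint (whose $\mathcal{L}_2$-norm is the same) and evaluate it in the eigenbasis $\{e_j\}_{j\in\mathbb{N}}$ of $L_\alpha$, which gives $\sum_{j}\lambda_j^{-2\beta}\|(I-P_h)e_j\|_{L_2(\Gamma)}^{2}$. The projection estimate \eqref{eq:phest} applied to $e_j$ yields $\|(I-P_h)e_j\|_{L_2(\Gamma)}^{2}\le C\hat{h}^{4s}\lambda_j^{2s}$ for every $s\in[0,1]$, and the residual series $\sum_j \lambda_j^{2s-2\beta}$ converges by the Weyl law of Corollary~\ref{cor:genkirchhoffweyl} precisely when $s<\beta-\nicefrac14$. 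Choosing $s=\sigma-\nicefrac14$ for any $\sigma\in(\nicefrac14,\beta)$ produces the contribution $\hat{h}^{2\sigma-\nicefrac12}$. The second summand is handled in the $L_2(\Gamma)$-orthonormal eigenbasis $\{e_{j,h}\}_{j=1}^{N_h}$ of $L_h$ in $V_h$ (extended by anything in $V_h^\perp$, which $P_h$ annihilates); since $P_h e_{j,h}=e_{j,h}$,
\[
\|(L_\alpha^{-\beta}-L_h^{-\beta})P_h\|_{\mathcal{L}_2(L_2(\Gamma))}^{2} \;=\; \sum_{j=1}^{N_h}\bigl\|(L_\alpha^{-\beta}-L_h^{-\beta})e_{j,h}\bigr\|_{L_2(\Gamma)}^{2},
\]
and each summand is bounded by Lemma~\ref{lem:fracp}(a) with $s=0$ by $C\hat{h}^{2\beta-\epsilon}$; after absorbing $\epsilon$ into the slack $\beta-\sigma>0$ and summing over the $N_h$ terms one obtains $\hat{h}^{2\sigma}\sqrt{N_h}$. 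The quadrature piece is treated identically: writing it as $\sum_{j=1}^{N_h}\|(L_h^{-\beta}-Q_{h,k,\beta})e_{j,h}\|_{L_2(\Gamma)}^{2}$ and invoking the exponential estimate of \cite[Theorem~3.5]{bonito2015numerical} on each summand produces $e^{-\pi^2/(2k)}\sqrt{N_h}$. Assembling the three estimates via the triangle inequality in $\mathcal{L}_2(L_2(\Gamma))$ yields the claimed bound.

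The main obstacle is the first summand. The deterministic-style projection estimate \eqref{eq:phest} only admits a nonnegative smoothness index, whereas summability of $\sum_j\lambda_j^{2s-2\beta}$ under the Weyl asymptotics forces $s<\beta-\nicefrac14$; these two constraints together are exactly what costs the factor $\hat{h}^{-\nicefrac12}$ relative to the deterministic rate in Theorem~\ref{thm:strong_rate_determ}, and force the restriction $\sigma>\nicefrac14$ in the conclusion. A minor but necessary subtlety is that $A\mathcal{W}$ must be interpreted as the $L_2(\Omega,L_2(\Gamma))$-convergent series $\sum_k \xi_k A e_k$ (since $\mathcal{W}\notin L_2(\Gamma)$), so Hilbert--Schmidt membership of $A$ has to be verified each time the identity above is invoked.
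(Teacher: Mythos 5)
Your proposal is correct and follows essentially the same route as the paper: the three resulting error terms (after your re-splitting of the spatial error) coincide exactly with the paper's decomposition $E_1=L_\alpha^{-\beta}(\mathcal{W}-\mathcal{W}_h)$, $E_2=(L_\alpha^{-\beta}-L_h^{-\beta}P_h)\mathcal{W}_h$, $E_3=(L_h^{-\beta}P_h-Q_{h,k,\beta})\mathcal{W}_h$, and each is bounded the same way (adjoint trick plus \eqref{eq:phest} and Weyl's law with $s=\sigma-\nicefrac14$ for $E_1$; Lemma~\ref{lem:fracp} with $s=0$ and $\|P_h\|_{\mathcal{L}_2}^2=N_h$ for $E_2$; the quadrature estimate of \cite{bonito2015numerical} for $E_3$). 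Your remarks on where the $\hat{h}^{-\nicefrac12}$ loss and the restriction $\sigma>\nicefrac14$ come from are also consistent with the paper's argument.
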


\begin{proof}
	We write
	\begin{align*}
		u-u_{h,k}^{Q}&=L_\alpha^{-\beta}(\mathcal{W}-\mathcal{W}_h)+(L_\alpha^{-\beta}-
		L_{h}^{-\beta}P_h)\mathcal{W}_h+(L_{h}^{-\beta}P_h-
		Q_{h,k,\beta})\mathcal{W}_h \\
		&:=E_1+E_2+E_3.
	\end{align*}
	For $E_1$, using the fact that $L_\alpha^{-\beta}$ and 
	$(I-P_h)$ are self-adjoint, the estimate \eqref{eq:phest} and the 
	asymptotics \eqref{eq:eiga}, we find
	\begin{align*}
		\mathbb{E}\|E_1\|_{L_2(\Gamma)}^2 &= \sum_{\ell=1}^\infty\|L_\alpha^{-\beta}(I-P_h)e_\ell\|^2=
		\|L_\alpha^{-\beta}(I-P_h)\|^2_{\mathcal{L}_2(L_2(\Gamma))}\\
		&=\|(I-P_h)L_\alpha^{-\beta}\|^2_{\mathcal{L}_2(L_2(\Gamma))} = \sum_{\ell=1}^\infty\|(I-P_h)L_\alpha^{-\beta}e_\ell\|_{L_2(\Gamma)}^2 \\
		&\leq C\hat{h}^{4s}\sum_{\ell=1}^\infty\|L_\alpha^{s-\beta}e_\ell\|^2_{L_2(\Gamma)}
		\leq C \hat{h}^{4s} \sum_{\ell=1}^\infty \ell^{4s-4\beta}\leq C_\sigma \hat{h}^{4\sigma-1},
	\end{align*}
	where we took $s=\sigma-1/4$. 
	To bound $E_2$, we use Lemma~\ref{lem:fracp} to obtain
	$$
	\mathbb{E}\|E_2\|_{L_2(\Gamma)}^2\leq C \hat{h}^{4\sigma}\mathbb{E}\|\mathcal{W}_h\|^2_{L_2(\Gamma)}= C \hat{h}^{4\sigma} \|P_h\|^2_{\mathcal{L}_2(L_2(\Gamma))}= C \hat{h}^{4\sigma} N_h.
	$$
	We similarly bound $E_3$, this time using \cite[Theorem 3.5]{bonito2015numerical} 
	for the quadrature error $L_{h}^{-\beta}P_h-Q_{h,k,\beta}$,
	$
	\mathbb{E}\|E_3\|_{L_2(\Gamma)}^2\leq C e^{-\frac{\pi^2}{k}}\mathbb{E}\|\mathcal{W}_h\|^2_{L_2(\Gamma)}\leq C e^{-\frac{\pi^2}{k}} N_h,
	$.
\end{proof}

\begin{Remark}\label{rem:calibration}
	If we use a quasiuniform mesh so that $N_h \propto h^{-1}$ and calibrate $k$ so that $e^{-\frac{\pi^2}{2k}}\propto \hat{h}^{2\sigma}$, we obtain the strong convergence rate  $2\sigma - \nicefrac12$.
\end{Remark}

\begin{Remark}\label{rem:quad}
	The quadrature approximation $Q_{h,k,\beta}$ in \eqref{mainApproximation} can be replaced with any other exponentially convergent rational 
	approximation of the fractional operator (e.g., that proposed in \cite{BK2020rational}) without changing the rate of 
	convergence in Theorem~\ref{thm:strong_rate}. 
\end{Remark}

If $u$ solves $L_\alpha^\beta u = \mathcal{W}$, then the covariance operator
of $u$ is $L_\alpha^{-2\beta}$, and if we let
$
\varrho^\beta(x,y) = \sum_{j=1}^{\infty} \lambda_{j}^{-2\beta} e_{j}(x) e_{j}(y),\hbox{ for a.e. $(x,y)\in\Gamma\times\Gamma$},
$ 
then $\varrho^\beta$ is the covariance function of  $u$ and the
kernel of the operator $L_\alpha^{-2\beta}$. Likewise,
if $u_h$ is the solution of $L_{h,\alpha}^\beta u_h= \mathcal{W}_h$, then the covariance 
operator of $u_h$ is given by $L_{h}^{-2\beta}$. Furthermore, if we let
$
\varrho_h^\beta(x,y) = \sum_{j=1}^{N_h} \lambda_{j,h}^{-2\beta} e_{j,h}(x) e_{j,h}(y),\hbox{ for a.e. $(x,y)\in\Gamma\times\Gamma$},
$
then $\varrho_h^\beta$ is the covariance function of $u_h$ 
and the kernel of $L_{h}^{-2\beta}$. 
We now state and prove a result in which we obtain the convergence of the finite element approximation $ \varrho_h^\beta$ to the covariance function $\varrho^\beta$ of the solution $u$ of \eqref{Amaineqn} in the $L_2(\Gamma\times\Gamma)$-norm. 

\begin{Theorem}\label{cov_fem_approx_rate}
	Let $\alpha\in\mathbb{R}$ and either assume Assumption \ref{assum1_alt}, or assume that Assumption~\ref{assum1} holds with 
	${S = \frac{\lvert \alpha\rvert}{d_0}}$ (see Remark~\ref{rmk1}).  In addition, let $\beta>\nicefrac14$ and $\varrho^\beta$ denote the covariance function of the solution to \eqref{Amaineqn} under the generalized Kirchhoff vertex conditions. Then, for ${\sigma<\min\{4\beta-\nicefrac12,2\}}$, 
	$\|\varrho^\beta - \varrho_h^\beta\|_{L_2(\Gamma\times\Gamma)} \lesssim_{\sigma,\beta, \H, \kappa,\Gamma} \hat{h}^{\sigma}.$
\end{Theorem}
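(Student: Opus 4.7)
The plan is to identify the $L_2(\Gamma\times\Gamma)$-norm of a kernel with the Hilbert--Schmidt norm of the corresponding integral operator. Fix $\beta$ and write $T_h := L_h^{-\beta}P_h$; one checks that $T_h$ is self-adjoint on $L_2(\Gamma)$ (since $L_h^{-\beta}$ leaves $V_h$ invariant) and that $T_h^2 = L_h^{-2\beta}P_h$. Thus $\varrho_h^\beta$ is the integral kernel of $T_h^2$ on $L_2(\Gamma)$, and since $\beta > \nicefrac14$ both $L_\alpha^{-2\beta}$ and $T_h^2$ are Hilbert--Schmidt by Corollary~\ref{cor:genkirchhoffweyl}, so that
\[
\|\varrho^\beta - \varrho_h^\beta\|_{L_2(\Gamma\times\Gamma)} = \|L_\alpha^{-2\beta} - T_h^2\|_{\mathcal{L}_2(L_2(\Gamma))}.
\]

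Setting $A := L_\alpha^{-\beta} - T_h$ (self-adjoint), I would use the product splitting $L_\alpha^{-2\beta} - T_h^2 = L_\alpha^{-\beta} A + A\,T_h$ and bound each piece in Hilbert--Schmidt norm by expanding in a suitable orthonormal basis. Expanding the first term in the eigenbasis $\{e_k\}$ of $L_\alpha$ and using self-adjointness of $A$ gives $\|L_\alpha^{-\beta}A\|_{\mathcal{L}_2}^2 = \sum_{k\in\mathbb{N}} \lambda_k^{-2\beta}\|Ae_k\|_{L_2(\Gamma)}^2$. For the second term, expanding $\|AT_h\|_{\mathcal{L}_2}^2$ in an ONB of $L_h$ on $V_h$ extended by any ONB of $V_h^\perp$ (on which $T_h$ vanishes) yields $\|AT_h\|_{\mathcal{L}_2}^2 = \sum_{j=1}^{N_h} \lambda_{j,h}^{-2\beta}\|Ae_{j,h}\|_{L_2(\Gamma)}^2$.

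To bound $\|Ae_k\|$ and $\|Ae_{j,h}\|$, I would apply Lemma~\ref{lem:fracp}. Since $e_k \in D(L_\alpha^s)$ for every $s\geq 0$ with $\|L_\alpha^s e_k\| = \lambda_k^s$, the lemma directly gives $\|Ae_k\| \lesssim \hat h^{2\beta+2s-\epsilon}\lambda_k^s$ when $\beta + s < 1$. For the discrete eigenfunctions I would use that $e_{j,h}\in V_h \subset H^1(\Gamma) \cong \dot{H}_{L_\alpha}^1(\Gamma)$ with $\|L_\alpha^{1/2}e_{j,h}\|^2 = \mathfrak{h}_\alpha(e_{j,h}, e_{j,h}) = \lambda_{j,h}$, so that the spectral interpolation estimate $\|L_\alpha^s e_{j,h}\| \leq \|L_\alpha^{1/2}e_{j,h}\|^{2s}\|e_{j,h}\|^{1-2s} = \lambda_{j,h}^s$ holds for $s\in [0,\nicefrac12]$, whence $\|Ae_{j,h}\| \lesssim \hat h^{2\beta + 2s - \epsilon}\lambda_{j,h}^s$. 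Substituting and using Weyl's law $\lambda_k \sim k^2$ (Corollary~\ref{cor:genkirchhoffweyl}) together with the min-max inequality $\lambda_{j,h}\geq \lambda_j$, both sums are controlled by $\hat h^{4\beta + 4s - 2\epsilon}\sum_j j^{4s-4\beta}$, whose tail converges when $s < \beta - \nicefrac14$.

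The main obstacle is balancing three competing constraints on $s$: $\beta + s < 1$ from Lemma~\ref{lem:fracp}(a), $s \leq \nicefrac12$ for the estimate on $\|L_\alpha^s e_{j,h}\|$, and $s < \beta - \nicefrac14$ for summability. For $\nicefrac14 < \beta < \nicefrac58$ all three are met by $s = \beta - \nicefrac14 - \epsilon'$, which produces the rate $\hat h^{4\beta - \nicefrac12 - \epsilon}$. For $\beta \in [\nicefrac58, 1)$ one instead invokes the $\beta + s \geq 1$ branch of Lemma~\ref{lem:fracp}(a) with $s = \min(1-\beta, \nicefrac12)$, producing rate $\hat h^{2-\epsilon}$. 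For $\beta \geq 1$, the operator-norm estimate from Lemma~\ref{lem:fracp}(b) combined with the finiteness of $\|L_\alpha^{-\beta}\|_{\mathcal{L}_2}$ and $\|T_h\|_{\mathcal{L}_2}$ again delivers $\hat h^{2-\epsilon}$. Combining all cases yields the bound $\hat h^{\min\{4\beta - \nicefrac12,\, 2\} - \epsilon}$, from which the claim follows for any $\sigma < \min\{4\beta - \nicefrac12, 2\}$.
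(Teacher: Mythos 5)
Your argument is correct in its core for $\beta\in(\nicefrac14,2)$, but it follows a genuinely different route from the paper. The paper also starts from the identity $\|\varrho^\beta-\varrho_h^\beta\|_{L_2(\Gamma\times\Gamma)}=\|L_\alpha^{-2\beta}-L_h^{-2\beta}P_h\|_{\mathcal{L}_2(L_2(\Gamma))}$, but then splits the exponent \emph{asymmetrically}: with $\tau=2\beta-\nicefrac14-\delta$ it writes the error as $(L_\alpha^{-\tau}-L_h^{-\tau}P_h)L_h^{-(\nicefrac14+\delta)}P_h$ plus $L_\alpha^{-\tau}(L_h^{-(\nicefrac14+\delta)}P_h-L_\alpha^{-(\nicefrac14+\delta)})$, bounds the second factor of the second term in the operator norm $\mathcal{L}(\dot{H}^\gamma_{L_\alpha}(\Gamma),L_2(\Gamma))$ via Lemma~\ref{lem:fracp} applied at the \emph{small} exponent $\nicefrac14+\delta$, absorbs the Hilbert--Schmidt summability into $\|L_\alpha^{-\tau}\|_{\mathcal{L}_2(L_2(\Gamma),\dot H^\gamma_{L_\alpha}(\Gamma))}$ via Weyl's law, and delegates the first term to the argument of \cite[Proposition~4]{coxkirchner}. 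You instead split symmetrically, $L_\alpha^{-2\beta}-T_h^2=L_\alpha^{-\beta}A+AT_h$ with $A=L_\alpha^{-\beta}-T_h$, and compute the Hilbert--Schmidt norms explicitly by expanding in the continuous and discrete eigenbases, applying Lemma~\ref{lem:fracp} at the exponent $\beta$ itself and using $\lambda_{j,h}\geq\lambda_j$ together with Weyl's law for summability. Your route is more elementary and self-contained (no appeal to an external decomposition), at the price of having to juggle the three constraints $\beta+s<1$, $s\leq\nicefrac12$, $s<\beta-\nicefrac14$; the paper's route confines the use of Lemma~\ref{lem:fracp} to exponents below $1$, which is what lets it scale to large $\beta$.

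Three points need attention. First, and most substantively, your case analysis does not cover $\beta\geq2$: Lemma~\ref{lem:fracp}(b) is stated only for $1<\beta<2$, so the operator-norm bound on $A=L_\alpha^{-\beta}-L_h^{-\beta}P_h$ that you invoke is unavailable there, and the theorem as stated allows any $\beta>\nicefrac14$. You would need either a telescoping/composition argument to extend Lemma~\ref{lem:fracp} beyond $\beta=2$, or the paper's asymmetric splitting. Second, at the boundary value $\beta=\nicefrac58$ your choice $s=\min(1-\beta,\nicefrac12)=\nicefrac38$ equals $\beta-\nicefrac14$, violating the strict summability condition; this is harmless (take $s$ slightly smaller and use the $\beta+s<1$ branch, which still yields any rate below $2=4\beta-\nicefrac12$), but should be said. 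Third, the identity $\|L_\alpha^{\nicefrac12}e_{j,h}\|_{L_2(\Gamma)}^2=\mathfrak{h}_\alpha(e_{j,h},e_{j,h})=\lambda_{j,h}$ uses that the form domain of $L_\alpha$ is $H^1(\Gamma)$ with form $\mathfrak{h}_\alpha$; the paper proves the identification $\dot H^1_{L_0}(\Gamma)\cong H^1(\Gamma)$ only for $\alpha=0$ (Proposition~\ref{prp:Hdot1}), so for general $\alpha$ you should note that this follows from the coercivity and boundedness of $\mathfrak{h}_\alpha$ on $H^1(\Gamma)$ established in the proofs of Theorems~\ref{compactLinverse} and \ref{cor:weyllaw}.
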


\begin{proof}
	We begin by observing that $L_\alpha^{-2\beta} - L_{h}^{-2\beta}P_h$ is an 
	integral operator with the kernel given by $\varrho^\beta - \varrho_h^\beta$. Thus,
	$\|\varrho^\beta - \varrho_h^\beta\|_{L_2(\Gamma\times\Gamma)} = 
	\|L_\alpha^{-2\beta} - L_{h}^{-2\beta}P_h\|_{\mathcal{L}_2(L_2(\Gamma))}.$
	Fix any $\varepsilon>0$ and let ${0<\delta < \min\{\beta -\nicefrac14, \nicefrac{\varepsilon}{4}\}}$. Then, with $\tau = 2\beta -\nicefrac14-\delta > 0$, using the same decomposition of the error as in \cite[Proposition 4 (Part IIa)]{coxkirchner}, we have
	\begin{equation}\label{proof1}
		\begin{split}
			\|L_\alpha^{-2\beta} - L_{h}^{-2\beta}P_h\|_{\mathcal{L}_2(L_2(\Gamma))} &\leq \left\|\left(L_\alpha^{-\tau} - L_{h}^{-\tau}P_h\right)L_{h}^{-(\nicefrac14+\delta)}P_h\right\|_{\mathcal{L}_2(L_2(\Gamma))}\\
			&\,\,\,\,\,+ \left\|L_\alpha^{-\tau}\left(L_h^{-(\nicefrac14+\delta)}P_h - L_\alpha^{-(\nicefrac14+\delta)}\right)\right\|_{\mathcal{L}_2(L_2(\Gamma))}.
		\end{split}
	\end{equation}
	
	By following a similar idea to the one in the proof of \cite[Proposition 4]{coxkirchner}, we obtain that
	for sufficiently small $h$, 
	\begin{equation}\label{bound1}
		\left\|\left(L_\alpha^{-\tau} - L_{h}^{-\tau}P_h\right)L_{h}^{-(\nicefrac14+\delta)}P_h\right\|_{\mathcal{L}_2(L_2(\Gamma))} \lesssim_{\varepsilon, \beta,\kappa, \H, \Gamma} \hat{h}^{\min\{4\beta-\nicefrac12-\varepsilon, 2-\nicefrac{\varepsilon}{2}\}}.
	\end{equation}
	
	To conclude the proof, a bound for the second term on the right-hand side of \eqref{proof1} must still be obtained. In this part our proof differs from the one in \cite{coxkirchner}, as Lemma \ref{lem:fracp} allows us to avoid requiring equivalence between the norms $\|\cdot\|_{\dot{H}^\gamma_{L_\alpha}(\Gamma)}$ and $\|\cdot\|_{H^\gamma(\Gamma)}$, which was a key ingredient in the proof given in  \cite{coxkirchner}. Therefore, we let $\gamma := \min\{4\beta-1-4\delta,2\}>0$, and we then have 
	\begin{equation}\label{norm_ineq}
		\begin{split}
			\left\|L_\alpha^{-\tau}\right.&\left.\left(L_{h}^{-(\nicefrac14+\delta)}P_h - 
			L_\alpha^{-(\nicefrac14+\delta)}\right)\right\|_{\mathcal{L}_2(L_2(\Gamma))} \\
			&\leq \left\|L_\alpha^{-\tau}\right\|_{\mathcal{L}_2(L_2(\Gamma),\dot{H}_{L_\alpha}^\gamma(\Gamma))}
			\left\|L_{h}^{-(\nicefrac14+\delta)}P_h - L_\alpha^{-(\nicefrac14+\delta)}\right\|_{\mathcal{L}(\dot{H}_{L_\alpha}^\gamma(\Gamma),L_2(\Gamma))}.
		\end{split}
	\end{equation}
	We now bound the two norms appearing in the above expression separately. Recall that $\{e_j\}_{j\in\mathbb{N}}$ is an orthonormal basis in $L_2(\Gamma)$. Therefore, by Corollary~\ref{cor:genkirchhoffweyl},
	\begin{align*}
		\left\|L_\alpha^{-\tau}\right\|_{\mathcal{L}_2(L_2(\Gamma),\dot{H}_{L_\alpha}^\gamma(\Gamma))}^2
		&=
		\sum_{j=1}^\infty \left\|L_\alpha^{-\tau}e_j\right\|_{\dot{H}_{L_\alpha}^\gamma(\Gamma)}^2
		=
		\sum_{j=1}^\infty \left\|L_\alpha^{\nicefrac{\gamma}{2}} L_\alpha^{-\tau}e_j\right\|_{L_2(\Gamma)}^2\\
		&=
		\sum_{j=1}^\infty \lambda_j^{\gamma-4\beta + \nicefrac12 + 2\delta}
		\lesssim_{\kappa, \H,\Gamma}
		\sum_{j=1}^\infty j^{2\gamma-8\beta+4\delta+1},
	\end{align*}
	which is convergent because $2\gamma-8\beta+4\delta+1< -1$, and the last inequality holds because $\gamma-2\delta<\gamma\leq 4\beta-1-4\delta$.
	
	Finally, we bound the second term on the right of \eqref{norm_ineq} 
	 using Lemma~\ref{lem:fracp} as:
	\begin{align*}
		\left\| L_{h}^{-(\nicefrac14+\delta)} P_h-L_\alpha^{-(\nicefrac14+\delta)} \right\|_{\mathcal{L}(\dot{H}_{L_\alpha}^\gamma(\Gamma),L_2(\Gamma))}
		&\lesssim_{\varepsilon,\delta,\gamma, \kappa, \H, \Gamma}
		\hat{h}^{\min\{2(\nicefrac14+\delta) + \gamma - \nicefrac{\varepsilon}{2},2-\nicefrac{\varepsilon}{2}\}}\\
		&=
		\hat{h}^{\min\{4\beta-\nicefrac12-2\delta-\nicefrac{\varepsilon}{2},2-\nicefrac{\varepsilon}{2}\}}.
	\end{align*}
	To conclude, we observe that $2\delta < \nicefrac{\varepsilon}{2}$. Thus, for sufficiently small $h$, 
	\begin{equation}\label{bound2}
		\left\| L_{h}^{-(\nicefrac14+\delta)} P_h-L_\alpha^{-(\nicefrac14+\delta)} \right\|_{\mathcal{L}(\dot{H}_{L_\alpha}^\gamma(\Gamma),L_2(\Gamma))}
		\lesssim_{\varepsilon, \beta,\kappa, \H, \Gamma} \hat{h}^{\min\{4\beta-\nicefrac12-\varepsilon, 2-\nicefrac{\varepsilon}{2}\}}.
	\end{equation}
	Thus, the proof follows from \eqref{bound1} and \eqref{bound2}.
\end{proof}

Finally, we consider the quadrature approximation
 $u_{h,k}^Q = Q_{h,k,\beta} \mathcal{W}_h$. The covariance 
operator of $u_{h,k}^Q$ is $Q_{h,k,2\beta}$. 
Let $q_{k,\beta}(x)=\frac{2k \sin(\pi\beta)}{\pi}\sum_{l=-K^{-}}^{K^+}e^{2\beta l k}(1+e^{2l k}x)^{-1}$.
Then, the kernel of $Q_{h,k,2\beta}$, and covariance function of $u_{h,k}^Q$, is
$$
\varrho_{h,k}^\beta(x,y) = \sum_{j=1}^{N_h} q_{k,2\beta}(\lambda_{j,h,\alpha}) e_{j,h,\alpha}(x) e_{j,h,\alpha}(y),\quad\hbox{for a.e. $(x,y)\in\Gamma\times\Gamma$}.
$$

\begin{Theorem}\label{cov_fem_approx_rate_quadrature}
	Let $\alpha\in\mathbb{R}$ and either assume Assumption \ref{assum1_alt}, or assume that Assumption~\ref{assum1} holds with 
	${S = \frac{\lvert \alpha\rvert}{d_0}}$ (see Remark~\ref{rmk1}). Additionally, fix $\beta>\nicefrac14$ and let $\varrho^\beta$ denote the covariance function of the solution to \eqref{Amaineqn} under the generalized Kirchhoff vertex conditions. Then, for any ${\sigma<\min\{4\beta-\nicefrac12,2\}}$, we have
	$$\|\varrho^\beta - \varrho_{h,k}^\beta\|_{L_2(\Gamma\times\Gamma)} \lesssim_{\sigma,\beta, \H, \kappa,\Gamma} \hat{h}^{\sigma} + e^{-\frac{\pi^2}{2k}}N_h^{\nicefrac12}.$$
\end{Theorem}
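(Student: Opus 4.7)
The proof will follow the by-now-standard pattern: use the triangle inequality to split the error into a finite element part and a quadrature part, bound the finite element part by the preceding covariance-convergence theorem, and bound the quadrature part by combining the operator-norm quadrature estimate of \cite{bonito2015numerical} with the observation that the relevant operator is effectively finite-rank, which converts operator norm into Hilbert--Schmidt norm at the cost of a factor $N_h^{\nicefrac12}$.

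More concretely, I would first note that, by the same identification used in the proof of Theorem~\ref{cov_fem_approx_rate}, the difference of covariance functions equals the kernel of $L_\alpha^{-2\beta} - Q_{h,k,2\beta}$, so
$$
\|\varrho^\beta - \varrho_{h,k}^\beta\|_{L_2(\Gamma\times\Gamma)} = \|L_\alpha^{-2\beta} - Q_{h,k,2\beta}\|_{\mathcal{L}_2(L_2(\Gamma))}.
$$
Then I would decompose
$$
L_\alpha^{-2\beta} - Q_{h,k,2\beta} = \bigl(L_\alpha^{-2\beta} - L_{h}^{-2\beta}P_h\bigr) + \bigl(L_{h}^{-2\beta}P_h - Q_{h,k,2\beta}\bigr),
$$
and apply the triangle inequality in $\mathcal{L}_2(L_2(\Gamma))$.

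For the first summand, Theorem~\ref{cov_fem_approx_rate} applied with $\beta$ replaced by the same $\beta>\nicefrac14$ (recalling that its proof bounds exactly $\|L_\alpha^{-2\beta} - L_h^{-2\beta}P_h\|_{\mathcal{L}_2(L_2(\Gamma))}$) gives $\lesssim_{\sigma,\beta,\H,\kappa,\Gamma} \hat{h}^{\sigma}$ for any $\sigma<\min\{4\beta-\nicefrac12, 2\}$. For the second summand, the key observation is that both $L_{h}^{-2\beta}P_h$ and $Q_{h,k,2\beta}$ map $L_2(\Gamma)$ into $V_h$ and vanish on $V_h^\perp$. Therefore, taking any orthonormal basis $\{\phi_i\}_{i\in\mathbb{N}}$ of $L_2(\Gamma)$ whose first $N_h$ elements form an orthonormal basis of $V_h$, for any operator $T$ with this structure,
$$
\|T\|_{\mathcal{L}_2(L_2(\Gamma))}^2 = \sum_{i=1}^{N_h} \|T\phi_i\|_{L_2(\Gamma)}^2 \leq N_h \, \|T\|_{\mathcal{L}(L_2(\Gamma))}^2.
$$
Applying this to $T = L_{h}^{-2\beta}P_h - Q_{h,k,2\beta}$ and invoking \cite[Theorem 3.5]{bonito2015numerical} (exactly as in the proof of Theorem~\ref{thm:strong_rate}) yields
$$
\|L_{h}^{-2\beta}P_h - Q_{h,k,2\beta}\|_{\mathcal{L}_2(L_2(\Gamma))} \leq N_h^{\nicefrac12}\, \|L_{h}^{-2\beta}P_h - Q_{h,k,2\beta}\|_{\mathcal{L}(L_2(\Gamma))} \lesssim N_h^{\nicefrac12} e^{-\frac{\pi^2}{2k}}.
$$
Combining the two bounds gives the claimed estimate.

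There is really no substantial obstacle here; the proof is a clean combination of Theorem~\ref{cov_fem_approx_rate} and the quadrature estimate. The only point requiring a moment of care is the passage from operator norm to Hilbert--Schmidt norm for the quadrature error, which relies on noticing that both the discrete fractional inverse and its rational/quadrature approximant are supported on (and have range in) $V_h$, so that the Hilbert--Schmidt norm of their difference involves at most $N_h$ nontrivial terms. Once this is observed the argument is essentially immediate.
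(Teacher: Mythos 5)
Your overall architecture matches the paper's: a triangle-inequality split into the finite element covariance error (bounded by Theorem~\ref{cov_fem_approx_rate}) and a quadrature error, together with the observation that an operator supported on $V_h$ has Hilbert--Schmidt norm at most $N_h^{\nicefrac12}$ times its operator norm. Those two ingredients are exactly what the paper uses; it phrases the second one as $\|TP_h\|_{\mathcal{L}_2(L_2(\Gamma))}\le \|T\|_{\mathcal{L}(L_2(\Gamma))}\|P_h\|_{\mathcal{L}_2(L_2(\Gamma))}=N_h^{\nicefrac12}\|T\|_{\mathcal{L}(L_2(\Gamma))}$, which is the same computation as your basis argument.

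The gap is in how you bound the quadrature term. You invoke \cite[Theorem 3.5]{bonito2015numerical} directly for the exponent $2\beta$, treating $Q_{h,k,2\beta}$ as the sinc quadrature approximation of $L_{h}^{-2\beta}$. This fails for two reasons. First, the quadrature \eqref{quadrature} and its error estimate are only defined for exponents in $(0,1)$ (note $K^{+}=\ceil[\big]{\pi^2/(4(1-\beta)k^2)}$), so substituting $2\beta$ breaks down whenever $\beta\ge\nicefrac12$, which the theorem allows. Second, and more fundamentally, the covariance operator of the field that is actually simulated, $u_{h,k}^{Q}=Q_{h,k,\beta}\mathcal{W}_h$, is $Q_{h,k,\beta}P_hQ_{h,k,\beta}^{\ast}=Q_{h,k,\beta}^2P_h$, i.e.\ the \emph{square} of the exponent-$\beta$ quadrature, not a quadrature with exponent $2\beta$ (the paper's notation $Q_{h,k,2\beta}$ is best read as $Q_{h,k,\beta}^2$, as its proof makes clear). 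The paper resolves this with the difference-of-squares factorization $L_{h}^{-2\beta}P_h-Q_{h,k,2\beta}P_h=(L_{h}^{-\beta}P_h+Q_{h,k,\beta}P_h)(L_{h}^{-\beta}P_h-Q_{h,k,\beta}P_h)$, which is legitimate because both operators are functions of $L_h$ and hence commute on $V_h$; it then bounds $\|L_{h}^{-\beta}P_h+Q_{h,k,\beta}P_h\|_{\mathcal{L}(L_2(\Gamma))}$ uniformly in $h$ (via Lemma~\ref{lem:fracp}, the quadrature estimate, and boundedness of $L_\alpha^{-\beta}$) and applies the $N_h^{\nicefrac12}$ conversion together with \cite[Theorem 3.5]{bonito2015numerical} only to the exponent-$\beta$ difference $L_{h}^{-\beta}P_h-Q_{h,k,\beta}P_h$. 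Your argument needs this factorization step; as written, the quadrature estimate is being applied to an operator to which it does not apply.
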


\begin{proof}
	First, by the triangle inequality, 
	$$\|\varrho_{h,m}^\beta - \varrho^\beta\|_{L_2(\Gamma\times\Gamma)} \leq \|\varrho_{h}^\beta - \varrho^\beta\|_{L_2(\Gamma\times\Gamma)} + \|\varrho_{h,m}^\beta - \varrho_h^\beta\|_{L_2(\Gamma\times\Gamma)}.$$
	Further, by Theorem~\ref{cov_fem_approx_rate},
	$\|\varrho_{h}^\beta - \varrho^\beta\|_{L_2(\Gamma\times\Gamma)} \lesssim_{\sigma,\beta, \H, \kappa,\Gamma} \hat{h}^{\sigma}.$
	Therefore, we must still bound $\|\varrho_{h,m}^\beta - \varrho_h^\beta\|_{L_2(\Gamma\times\Gamma)}$. We observe that
	$$
	\|\varrho_{h,m}^\beta - \varrho_h^\beta\|_{L_2(\Gamma\times\Gamma)} = \|Q_{h,k,2\beta} P_h - L_{h,\alpha,-2\beta}P_h\|_{\mathcal{L}_2(L_2(\Gamma))}.
	$$
	Furthermore, from Lemma~\ref{lem:fracp} and \cite[Theorem 3.5]{bonito2015numerical}, with the fact that the operator $L_\alpha^{-\beta}$ is bounded, some constant $C$ exists such that, for every $0 < \hat{h}<1$, $\|L_{h}^{-\beta}P_h\|_{\mathcal{L}(L_2(\Gamma))} \leq C$
	and $\|Q_{h,k,\beta}P_h\|_{\mathcal{L}(L_2(\Gamma))} \leq C.$
	Again using \cite[Theorem 3.5]{bonito2015numerical} for the quadrature error, we obtain
	\begin{align*}
		\|L_{h}^{-\beta}P_h - Q_{h,k,\beta} P_h&\|_{\mathcal{L}_2(L_2(\Gamma))}^2 =
		\|(L_{h}^{-\beta}P_h - Q_{h,k,\beta} P_h)P_h\|_{\mathcal{L}_2(L_2(\Gamma))}^2\\
		&\leq \|L_{h}^{-\beta}P_h - Q_{h,k,\beta} P_h\|_{\mathcal{L}(L_2(\Gamma))}^2\|P_h\|^2_{\mathcal{L}_2(L_2(\Gamma))}
		\leq C e^{-\frac{\pi^2}{k}} N_h.
	\end{align*}
	Finally, by gathering all these results and considering that 
	$L_{h}^{-\beta}$ and $ Q_{h,k,\beta}$ commute on $V_h$ and thus 
	$L_{h}^{-2\beta} P_h - Q_{h,k,2\beta}P_h=(L_{h}^{-\beta}P_h + Q_{h,k,\beta}P_h)(L_{h}^{-\beta}P_h - Q_{h,k,\beta}P_h),
	$
	\begin{align*}
		\|L_{h}^{-2\beta} &P_h - Q_{h,k,2\beta}P_h\|_{\mathcal{L}_2(L_2(\Gamma))} \\
		&\leq (\|L_{h}^{-\beta}P_h\|_{\mathcal{L}(L_2(\Gamma))} + \|Q_{h,k,\beta}P_h\|_{\mathcal{L}(L_2(\Gamma))}) \|L_{h}^{-\beta}P_h - Q_{h,k,\beta} P_h\|_{\mathcal{L}_2(L_2(\Gamma))}\\
		&\leq 2C^{\nicefrac32} e^{-\frac{\pi^2}{2k}} N_h^{\nicefrac12},
	\end{align*}
	which concludes the proof.
\end{proof}

\section{Numerical experiments}\label{sec:experiments}
In this section, we consider the stochastic equation \eqref{Amaineqn}, with $\kappa = 1$,  $\H=1$, and generalized Kirchhoff vertex conditions with $\alpha = \kappa$ on the graph in the left panel of Fig.~\ref{fig:graph}. The aim is to verify the strong error rate of Theorem~\ref{thm:strong_rate} and the error rate for the covariance approximation of Theorem~\ref{cov_fem_approx_rate_quadrature} for the sinc-Galerkin approximations. All experiments were done in \texttt{R} using the \texttt{MetricGraph} 
\cite{bsw_MetricGraph_cran} package.

\begin{figure}[t!]
	\begin{center}
		\includegraphics[width=0.48\linewidth]{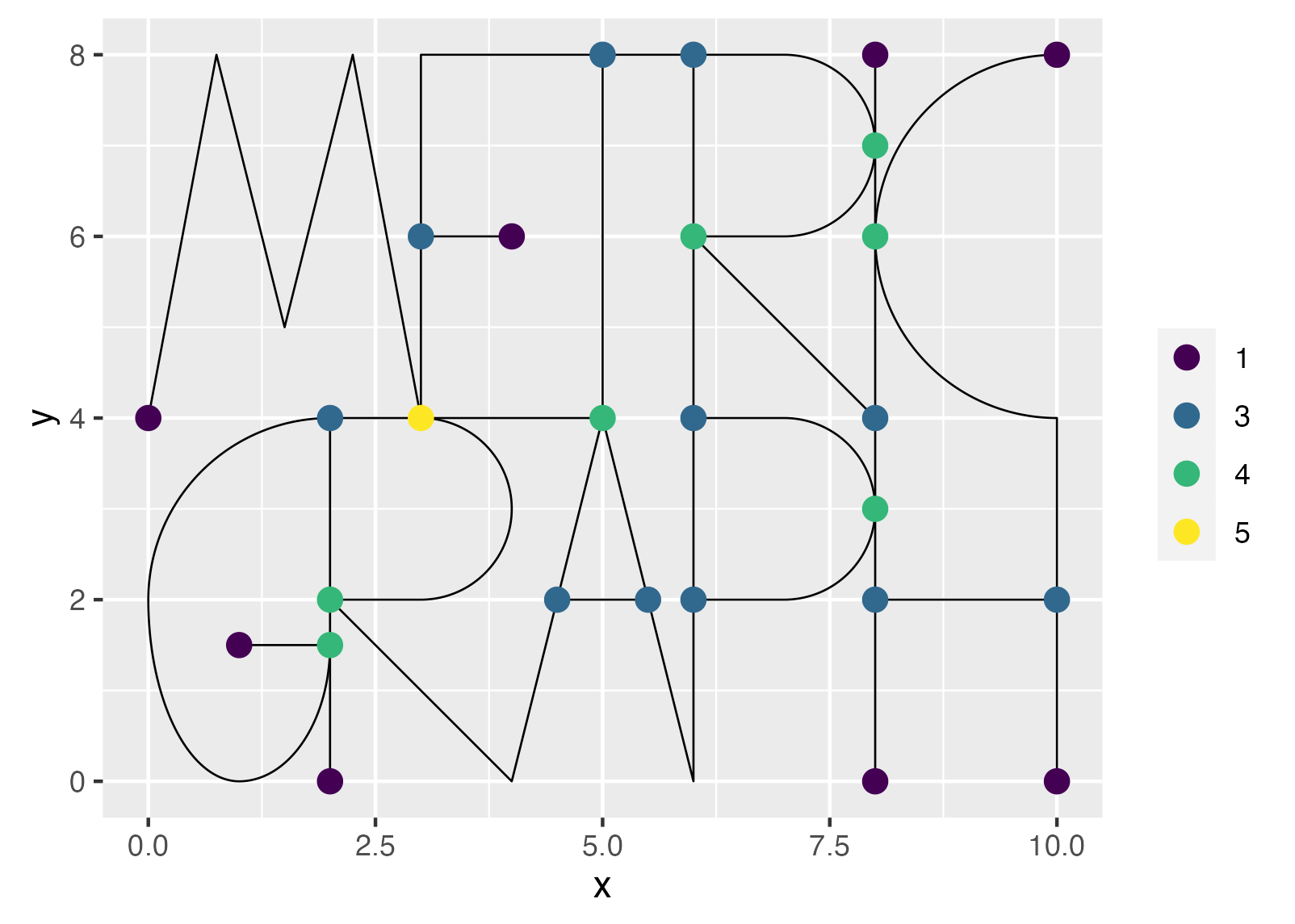}
		\includegraphics[width=0.48\linewidth]{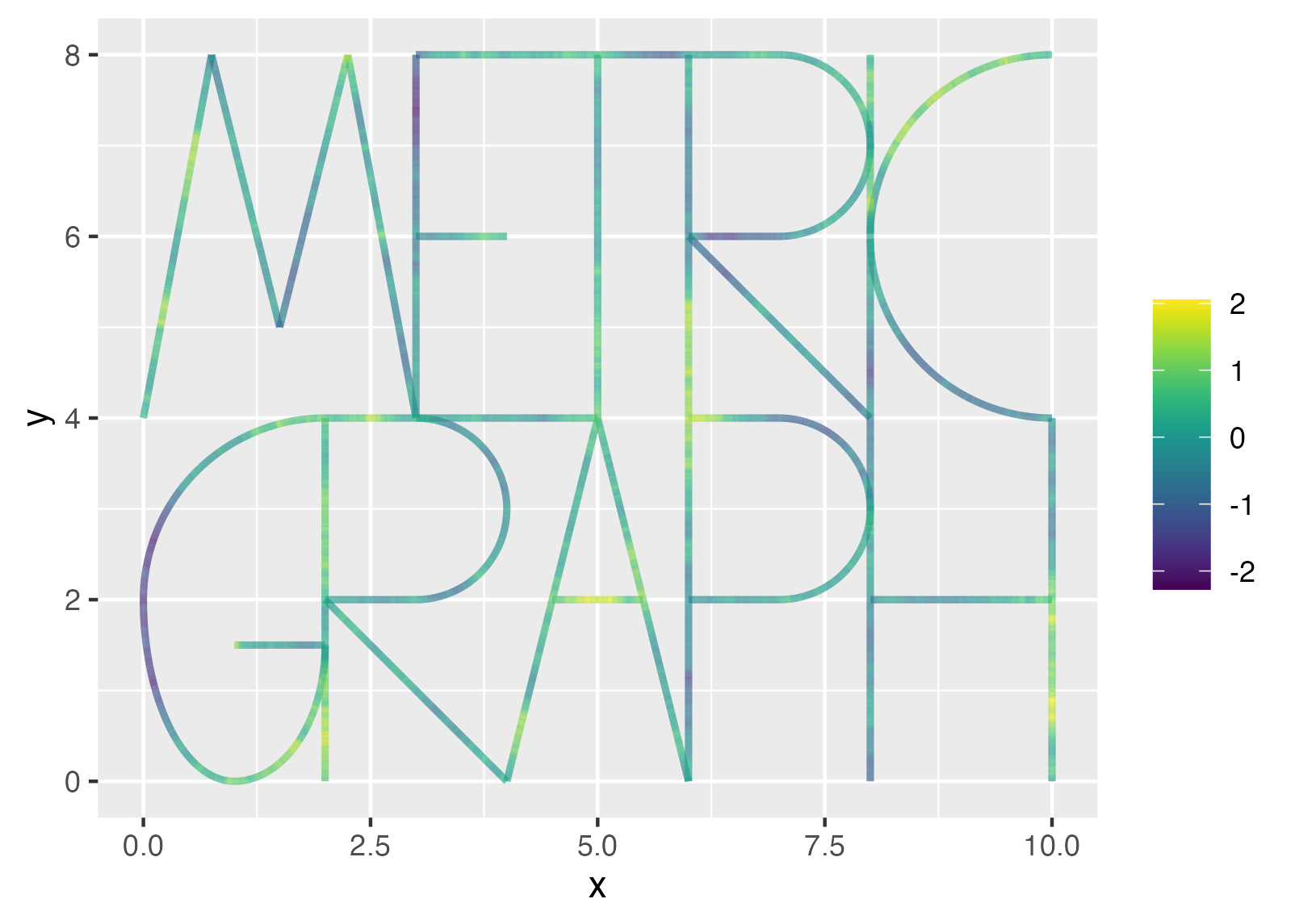}
	\end{center}
	\caption{A metric graph where the colors of the vertices show the degrees (left) and a simulation of the solution to \eqref{Amaineqn} on the graph, with generalized Kirchhoff vertex conditions and $\kappa = \beta = \alpha = 1$.}
	\label{fig:graph}
\end{figure}

\subsection{Strong error}
According to Theorem~\ref{thm:strong_rate}, the theoretical rate of convergence is $2\beta - \nicefrac12$ for $\beta \in (\nicefrac{1}{4},1)$ if the quadrature step size satisfies ${k \leq -\pi^2/(\beta\ln h)}$, where $h$ is the finite element mesh width. We set $k = -1/(\beta\ln h)$ and consider $\beta = \nicefrac{n}{8}$ for $n = 3,4,\ldots,7$ for four meshes, where each edge in the graph is split into equally sized elements on each line, so that the maximal segment length is $2^{-\ell}$, for $\ell = 3,4,5,6$. The resulting number of FEM basis functions and the corresponding numbers of quadrature nodes for each considered value of $\beta$ are listed in Table~\ref{tab:nodenumber}.

\begin{table}[t]
	\caption{The number of finite element basis functions 
		$N_h$ on the considered meshes and
		the corresponding number of quadrature nodes. The last row contains the number for the overkill solution. }
	\begin{center}
		\begin{tabular}{lc|ccccc}
			\toprule
			& $\beta$ & $\nicefrac38$  & $\nicefrac48$ & $\nicefrac58$ & $\nicefrac68$ & $\nicefrac78$\\
			\cmidrule(r){1-7}
			$N_h$ & 799 & 9 & 13 & 20 & 35 & 77 \\
						& 1595 & 14  & 21 & 33 & 59 & 135 \\
						 & 3183 & 20 & 31 & 51 & 91 & 209 \\
						 & 6364 & 28  & 45 & 73 & 131 & 301 \\
			$N_{\mathrm{ok}}$ & 101807 & 73 & 121 & 200 & 357 & 832 \\
			\bottomrule
		\end{tabular}
	\end{center}
	\label{tab:nodenumber}
\end{table}

Because we cannot compute the exact solution, we use a sinc-Galerkin approximation $u_{\mathrm{ok}}$ obtained using an overkill mesh with $h = 2^{-10}$ as a proxy for the true solution while computing the strong mean-square error between the exact solution $u$ and the approximation $u_{h,k}^Q$. Specifically, we first sample the right-hand side $\mv{W}_{\mathrm{ok}} \sim \pN(\mv{0}, \mv{M})$, where $\mv{M}$ is the mass matrix for the FEM basis based on the overkill mesh, see equation \eqref{eq:sample}. Based on this, we compute the $\mv{u}_{\mathrm{ok}}$ overkill solution via \eqref{mainApproximation} evaluated on the overkill mesh. To compute the FEM approximation based on a coarser mesh, we first project $\mv{W}_{\mathrm{ok}}$ onto the corresponding FEM basis by computing $\mv{W}_h = \mv{A}\mv{W}_{\mathrm{ok}}$, where $\mv{A}$ is a matrix with elements $A_{ij} = \varphi_i(s_j)$, where $\varphi_i$ denotes the $i$th basis function based on the coarse mesh and $s_j$ denotes the $j$th node in the overkill mesh. Then, we calculate the corresponding sinc-Galerkin approximation $\mv{u}_h$ using $\mv{W}_h$ as the right-hand side in \eqref{mainApproximation}, and finally, project the solution to the overkill mesh by computing $\mv{u}_{h,k} = \mv{A}\widetilde{\mv{u}}$.

For each value of $\beta$, this procedure is repeated 10 times for various realizations of $\mv{W}_{\mathrm{ok}}$, and the observed strong error, $\mathrm{err}$, is computed as
the square-root of the average squared $L_2$-errors,
	$\mathrm{err}^2 := \tfrac{1}{10} \sum_{i=1}^{10} \bigl(\mv{u}_{\mathrm{ok}}^{(i)} - \mv{u}_{h,k}^{(i)}\bigr)^\top
		\mv{M} \, \bigl(\mv{u}_{\mathrm{ok}}^{(i)} - \mv{u}_{h,k}^{(i)}\bigr)$.
The results are presented in the left panel of Fig.~\ref{fig:errors}.
Based on the observed strong errors, for each value of $\beta$, we compute the observed rate of convergence $\mathrm{r}$ using the least squares estimate of the coefficients in the linear regression ${\ln \mathrm{err} = \mathrm{c} + \mathrm{r} \ln h}$.
The resulting observed rates are in accordance with the theoretical values (Table~\ref{tab:rates}).

\begin{figure}[t!]
	\begin{center}
				\includegraphics[width=0.495\linewidth]{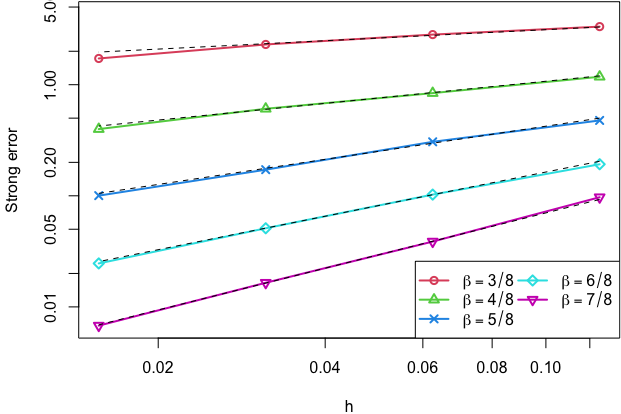}    
				\includegraphics[width=0.495\linewidth]{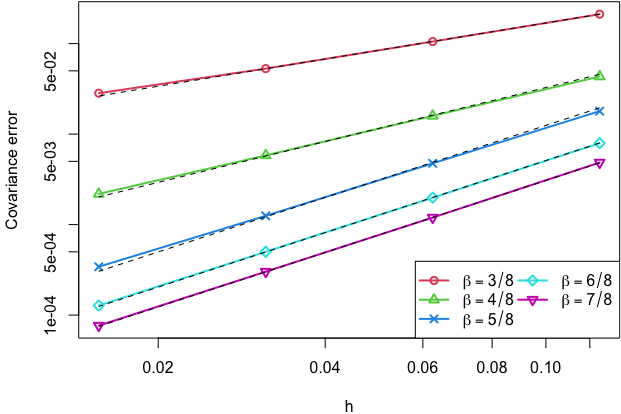}    
	\end{center}
	\caption{Observed strong error (left) and covariance error (right) for different values of $\beta$ as functions of the mesh size $h$. The black dashed lines show the theoretical rates for each case.}
	\label{fig:errors}
\end{figure}

\begin{table}[t]
	\caption{Observed (theoretical) rates of convergence for the strong errors and covariance errors in Fig.~\ref{fig:errors}.}
	{\centering
		\begin{tabular}{lccccc}
			\toprule
			$\beta$ &  $\nicefrac38$  & $\nicefrac48$ & $\nicefrac58$ & $\nicefrac68$ & $\nicefrac78$\\
			\cmidrule(r){2-6}
			Strong rates & 0.31 (0.25) & 0.52 (0.5) & 0.76 (0.75) & 0.99 (1.0)	& 1.27 (1.25)\\
			Covariance rates &  0.97 (1.0)  &  1.44 (1.5) &  1.91 (2.0) & 1.99 (2.0) & 1.99 (2.0)\\
			\bottomrule
	\end{tabular}}
	\label{tab:rates}
\end{table}

\subsection{Covariance errors}
To verify the rate of the covariance approximation given in Theorem~\ref{cov_fem_approx_rate}, we perform a similar experiment to that for the strong errors. 
According to the theorem, the theoretical rate of convergence is ${\min(4\beta - \nicefrac12,2)}$ for $\beta \in (\nicefrac{1}{4},1)$ given the calibration $k = -1/(\beta\ln h)$ for the quadrature step size. We consider the same values of $\beta$ and the same meshes as for the strong errors. 
We again use an overkill approximation, now with $h = 2^{-8}$ to reduce computational costs, as a proxy for the true solution while computing the error between the exact covariance function and approximations. For each case, the $L_2(\Gamma\times\Gamma)$ error is computed by approximating the covariance functions to be piecewise constant on the overkill mesh. 
The results are provided in the right panel of Fig.~\ref{fig:errors}. As for the strong errors, the observed rates of convergence are computed using linear regression and are in 
accordance with the theoretical values (Table~\ref{tab:rates}).

\section{Acknowledgment}
The second author acknowledges the support of the Marsden Fund of the Royal Society of New Zealand (grant no.~18-UOO-143), the Swedish Research Council (VR) (grant no.~2017-04274) and the National Research, Development, and Innovation Fund of Hungary (grant no.~TKP2021-NVA-02 and K-131545).
The third author was supported in part  by a NBHM post-doctoral fellowship from the Department of Atomic
Energy (DAE), Government of India (file no.~0204/6/2022/R\&D-II/5635). 

\bibliographystyle{amsplain}
\bibliography{unified_graph_bib.bib}
\end{document}